\documentclass[11pt, reqno]{amsart}

\usepackage{amssymb,amscd,amsthm, verbatim,amsmath,color,fancyhdr, mathrsfs}
\usepackage{graphicx}
\usepackage{turnstile}
\usepackage{slashed}
\usepackage{tikz-cd}
\usepackage{enumitem}
\usepackage[hmargin=1.4in, vmargin=1.4in]{geometry}

\theoremstyle{definition} 
\newtheorem{thm}{Theorem}[section]
\newtheorem{prop}[thm]{Proposition}
\newtheorem{lem}[thm]{Lemma}
\newtheorem{rem}[thm]{Remark}
\newtheorem{cor}[thm]{Corollary}

\newtheorem{ex}[thm]{Example}
\newtheorem{dfn}[thm]{Definition}
\usepackage{xcolor}
\definecolor{softblue}{RGB}{60,120,200}
\definecolor{softred}{RGB}{170,60,60}

\usepackage[
  colorlinks=true,
  linkcolor=softred,   
  citecolor=softblue,  
  urlcolor=softblue
]{hyperref}
\usepackage{amsthm}

\newtheorem{theorem}{Theorem}
\newenvironment{intro}
  {%
    \setcounter{theorem}{0}%
  }
  {%
  }
\title[Linking at Infinity and Scalar Curvature Decay]{Linking at Infinity and Scalar Curvature Decay on Non-Compact Manifolds}

\numberwithin{equation}{section}

\author{Shunichiro Orikasa}
\address{Department of Mathematics, Graduate School of Science, Kyoto University, Sakyoku, Kyoto 606–8502, Japan}
\email{orikasa.shunichiro.34x@st.kyoto-u.ac.jp}
\date{\today}

\begin{document}
\begin{abstract}
We study complete non-compact manifolds of positive scalar curvature,
with a focus on how curvature decay is constrained by topology at infinity.
Our first main result shows that topological linking at infinity
forces polynomial decay of scalar curvature on manifolds
of weakly bounded geometry.
This result provides a conceptual generalization of recently discovered
examples of metrics with quadratic scalar curvature decay.

Building on this decay mechanism, we develop an obstruction theory
localized at the ends of non-compact manifolds.
Using $\mu$--bubble exhaustions together with the analysis of stable
minimal hypersurfaces and index theory,
we obtain qualitative obstructions to uniformly positive scalar curvature
on individual ends.

\end{abstract}
\maketitle

\begin{intro}

\section{Introduction}

The study on compact manifolds admitting metrics of positive scalar curvature
has seen substantial progress over the past decades.
Beginning with the resolution of the Geroch conjecture through the seminal works
of Schoen--Yau \cite{schoen1979existence} and Gromov--Lawson \cite{gromov1983positive},
a rich theory has been developed.
Moreover, even in the noncompact setting, obstruction theories for complete
metrics with \emph{uniformly} positive scalar curvature are relatively well understood,
notably via minimal hypersurface techniques \cite{schoen1979existence}
and Dirac operator methods \cite{gromov1983positive}.

In contrast, once the assumption of uniform positivity is removed,
the situation changes dramatically.
When the scalar curvature is allowed to decay at infinity,
the relationship between the topology at infinity and the behavior of scalar curvature
becomes highly delicate.
This is already reflected in the recent progress on quadratic decay estimates
for positive scalar curvature metrics using band width arguments
\cite{gromov2018metric}.
Despite these advances, a systematic understanding of how the decay of positive scalar curvature
interacts with the topology at infinity of noncompact manifolds
remains largely out of reach.

This difficulty is vividly illustrated by the following fundamental problem.
Once one allows for rapidly decaying positive scalar curvature,
very little is known about the classification of manifolds admitting such metrics.
For instance, the following conjecture remains open:
if $(M^3,g)$ is a contractible $3$--manifold equipped with a complete Riemannian metric
of positive scalar curvature $R>0$, must $M$ be diffeomorphic to $\mathbb{R}^3$?
In the case of positive Ricci curvature, the result was established by Schoen and Yau \cite{SchoenYau+2016+209+228}, who also provided a criterion to obstruct PSC on non-compact manifolds (see Section 3 thereof). 
Furthermore, the conjecture is known to hold under the stronger assumption $R \ge \sigma > 0$ for a constant $\sigma > 0$ \cite{chang2010taming}, which highlights the subtle nature of the $R>0$ case. 
We also note very recent progress regarding the topology of such manifolds in \cite{chodosh20253manifoldspositivescalarcurvature}.

The central question underlying this work is formulated as follows.

\begin{quote}
\emph{What is the relationship between the decay of positive scalar curvature
and the topology at infinity of noncompact manifolds?}
\end{quote}

In this paper, we provide a partial advance toward this question.
More precisely, we develop new tools to detect quantitative obstructions
arising from the topology at infinity, both for the nonexistence of uniformly positive
scalar curvature metrics and for geometric estimates in the presence of such metrics.

Our approach is based on techniques originating in geometric measure theory
that have been developed over recent years
\cite{Gromov2023FourLectures,chodosh2024generalized,chodosh2024complete}.
Within this framework, we study index theory
on (generally noncompact) minimal hypersurfaces
that arise from topological twisting at infinity,
thereby extending classical Dirac operator methods
in the spirit of \cite{gromov1983positive}
to a noncompact and asymptotic setting.

\subsection{Background and motivation}

A unifying perspective of this paper is the idea of \emph{localization}, which has
recently played a central role in scalar curvature geometry
\cite{Gromov2023FourLectures,chodosh2024generalized}.
A second guiding principle is that of \emph{spectral positivity}, whereby
spectral conditions on curvature lead to geometric and topological consequences
analogous to those arising from pointwise positivity of scalar curvature.

From a coarse geometric viewpoint, such localization allows one to assemble
large--scale information from geometric data at increasing scales
\cite{chodosh2023classifying}.
However, the $\mu$--bubble exhaustion property relies crucially on strict
positivity of scalar curvature and appears to be less well--suited to the study of metrics whose
scalar curvature decays rapidly at infinity.

This observation motivates a return to a careful analysis of minimal
hypersurfaces in the study of decaying positive scalar curvature.

\subsection{Topology at infinity and decay of scalar curvature}

Two--sided stable minimal hypersurfaces play a role in higher--dimensional
geometry analogous to that of stable geodesics in dimension one.
Given a minimal immersion
$
M^{n-1}\to (X^n,g)
$
with trivial normal bundle, stability is characterized by the inequality
\[
\int_M \bigl(|A_M|^2+\operatorname{Ric}_g(\nu,\nu)\bigr)\phi^2
\;\le\;
\int_M |\nabla \phi|^2,
\]
which holds for all compactly supported test functions~$\phi$.
This variational condition allows the hypersurface to ``sense" curvature of the ambient manifold through the normal Ricci term

When the ambient scalar curvature satisfies a lower bound $R_g\ge 1$,
the induced metric on~$M$ admits positive scalar curvature
\cite{schoen1979existence}.
This observation has become a foundational tool in the study of manifolds with
$R_g>0$.
The situation becomes more delicate once one allows the hypersurface to be
complete and non--compact.
In dimension three, however, the theory remains remarkably rigid.
Indeed, for a complete two--sided stable minimal surface
$
M^2\to (X^3,g),
$
nonnegativity of the scalar curvature forces the induced metric to be conformal to
either the plane or the cylinder~\cite{fischer1980structure}.
Even more strikingly, a positive scalar curvature lower bound $R_g\ge 1$
excludes the non--compact case altogether, forcing~$M$ to be compact
\cite{gromov1983positive}.

Recent work by Chodosh and Li \cite{chodosh2024stable} shows that a strong classification result persists in the flat ambient setting: any complete two-sided stable minimal hypersurface $M^3 \to \mathbb{R}^4$ must be flat. 
Furthermore, this classification has recently been extended to $\mathbb{R}^5$ \cite{chodosh2025stable} and $\mathbb{R}^6$ \cite{mazet2024stableminimalhypersurfacesmathbb}, whereas the phenomenon is known to fail for $\mathbb{R}^n$ with $n > 7$, with the case $n=7$ remaining a major open problem.
It is natural to
ask whether similar conclusions hold in curved ambient manifolds.

As pointed out in \cite{chodosh2024complete}, the answer turns out to be negative in dimension four. While in three dimensions scalar curvature bounds force compactness and Ricci positivity forbids existence, neither phenomenon survives in higher dimension. There exist non--compact stable minimal hypersurfaces in $4$--manifolds with uniformly positive scalar curvature (cf. {\cite[Example~1]{chodosh2024complete}}), as well as stable hypersurfaces in metrics with strictly positive sectional curvature. In particular, there exist rotationally symmetric metrics on $\mathbb{R}^4$ with strictly positive sectional curvature that contain totally geodesic copies of $\mathbb{R}^3$ (cf. {\cite[Example~2]{chodosh2024complete}}).

\begin{ex}[{\cite[Example~1]{chodosh2024complete}}]
\label{ex:psc_noncompact_stable}
Let $(X^2,g)$ be a closed oriented surface admitting a complete unit--speed stable
geodesic $\sigma:\mathbb{R}\to X^2$.
For $\varepsilon>0$ sufficiently small, the product manifold
$
(X^2,g)\times S^2(\varepsilon)
$
has scalar curvature bounded below by a positive constant.
Nevertheless, the product immersion
$
\sigma\times S^2(\varepsilon)
$
defines a complete, unbounded, two--sided stable minimal hypersurface.
\end{ex}
Our perspective is motivated by classical curvature estimates for entire high-dimensional minimal graphs. Stability imposes a quantitative decay estimate on the second fundamental form \cite[Theorem~11.4]{gromov1983positive}, forcing at most quadratic decay at infinity—a phenomenon already appearing in the scalar curvature setting \cite[Corollary~10.15]{gromov1983positive} and, more recently, in \cite{gromov2018metric}.
We also note that scalar curvature decay has been recently established on uniformly contractible manifolds via operator-theoretic methods \cite{wang2024decay}.

\subsection{Main Theorems}

The first and central theme concerns decay phenomena for positive scalar curvature
detected at infinity.
Our main result in this direction, Theorem~\ref{thm:link_infinity},
exhibits a new mechanism forcing scalar curvature decay,
arising from topological linking at infinity.
This theorem already yields
concrete polynomial decay estimates; see Corollary~\ref{cor:thm:link_infinity}.

The second theme addresses local obstructions to uniformly positive scalar curvature
in the non-compact setting.
We introduce higher-dimensional analogues of the classical small circle obstruction
of Gromov--Lawson~\cite{gromov1983positive},
formulated in terms of embedded \emph{small tori}.
These results show that uniform positivity may already fail on a single end;
see Theorem~\ref{thm:small_torus_obstruction}.

The third theme develops obstructions arising from topology at infinity, encoded by cohomology classes detected by a single proper ray. We prove that uniformly positive scalar curvature is obstructed if such a class decomposes as a cup product of degree-one classes (Theorem~\ref{thm:Ray_obstruction}). Furthermore, as a supplementary analysis, Appendix~\ref{sec:appendix_quantitative} provides a quantitative refinement of this result, establishing universal lower bounds on the $1$--dilation of proper maps (Theorem~\ref{thm:quantitative_sys_at_infinity}).

\medskip
\paragraph{\textbf{Linking at infinity and scalar curvature decay.}}\mbox{}
Our first main result, Theorem~\ref{thm:link_infinity},
reveals a new decay phenomenon for positive scalar curvature detected through
stable minimal hypersurfaces.
The precise notion of scalar curvature decay will be introduced and systematically
studied in Section~\ref{sec:sc_decay}.
The terminology appearing in Theorem~\ref{thm:link_infinity} will also be explained there.
\begin{theorem}[Theorem~\ref{thm:link_infinity}]
There exists a constant $C > 0$ such that the following statement holds.
Let $(X^4,g)$ be a complete oriented Riemannian manifold with weakly bounded geometry
and non-negative scalar curvature.
Let $\pi \colon X \to \mathbb{R}^3$ be a proper smooth map,
and let $F = \pi^{-1}(y)$ be the inverse image of a regular value.
Assume that $H_3(X)=0$ and that there exists
\begin{equation}
0 \neq \alpha \in \ker\bigl(H_2^\infty(X) \to H_2(X)\bigr)
\quad \text{with} \quad
\operatorname{Lk}(F,\alpha) \neq 0 .
\label{eq:alpha-linking-assumption}
\end{equation}
For $\rho>0$, set
$
\pi_\rho := \pi\big|_{\pi^{-1}(B^3(\rho))} \colon \pi^{-1}(B^3(\rho)) \to B^3(\rho).$
Then for all $\rho > 0$,
\begin{equation}
\min_{\pi^{-1}(B^3(\rho))} R_g
\;\le\;
C
\left(\frac{\operatorname{Dil}_1(\pi_\rho)}{\rho}\right)^2 .
\end{equation}
\end{theorem}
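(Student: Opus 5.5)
We argue by contradiction after rescaling. Suppose $\min_{\pi^{-1}(B^3(\rho))} R_g \ge \Lambda := C\,(\operatorname{Dil}_1(\pi_\rho)/\rho)^2$ for a large constant $C$. Set $L=\operatorname{Dil}_1(\pi_\rho)$. On the region $U=\pi^{-1}(B^3(\rho))$ the map $\pi$ is $L$--Lipschitz, so the preimages of the concentric shells $B^3(\rho)\setminus B^3(\rho/2)$ have width at least $\rho/(2L)$. The plan is to produce, inside $U$, a stable minimal (or $\mu$--bubble) hypersurface that is forced to exist by the linking hypothesis. We then show that $R_g\ge\Lambda$ forces its diameter, or its band width, below $\rho/L$, contradicting the linking.

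\textbf{Step 1: topological input.} Since $\alpha\in\ker(H_2^\infty(X)\to H_2(X))$, choose a cycle $\Sigma$ representing $\alpha$, supported far out, together with a $3$--chain $W$ with $\partial W=\Sigma$. Because $\operatorname{Lk}(F,\alpha)=W\cdot F\neq0$ and $H_3(X)=0$, the intersection number $W\cdot F$ is independent of the choice of $W$. We use this to show that every relative $3$--cycle in the band $\pi^{-1}(B^3(\rho)\setminus B^3(\rho/2))$ separating $F$ from the inner boundary must link nontrivially. More concretely, the class pulled back via $\pi$ from the generator of $H^2(S^2)$ pairs nontrivially with $\alpha$. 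Equivalently, $\pi_*\alpha$ is a nonzero multiple of the generator of $H_2(\mathbb{R}^3\setminus y)$, and we may take $\Sigma$ to sit inside $\pi^{-1}$ of a sphere of radius comparable to $\rho$, after a homotopy using properness.

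\textbf{Step 2: geometric construction.} In the band $V=\pi^{-1}(\overline{B^3(\rho)}\setminus B^3(\rho/2))$, solve a $\mu$--bubble problem with prescribing function $h$ built from $|\pi|$. The width is at least $\rho/(2L)$ and $R_g\ge\Lambda$, so the standard band-width estimate \cite{gromov2018metric,chodosh2024generalized} yields a $\mu$--bubble $\Omega$ once $\Lambda(\rho/2L)^2$ exceeds the critical constant $\pi^2 n/(n-1)$-type bound. The boundary $\partial\Omega$ is a closed hypersurface $M^3$ homologous to a slice $\pi^{-1}(S^2(r))$. By Step 1, $M$ carries a nontrivial intersection with $\alpha$ and $F$. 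Stability of $\mu$--bubbles gives a spectral inequality $-\Delta_M+\tfrac12(R_g-\ldots)\ge c\Lambda$ on $M$. We then iterate the $\mu$--bubble argument once more inside $M$, using the map $\pi|_M\to S^2$ of degree $\operatorname{Lk}(F,\alpha)\neq0$, with Lipschitz constant $L$ onto a sphere of radius $\sim\rho$. This produces a surface in $M$ of spectral positive curvature $\gtrsim\Lambda$ that still maps with nonzero degree. Every point of that surface then lies within distance $\lesssim\Lambda^{-1/2}$ of its boundary-free core, so its image has diameter $\lesssim L\Lambda^{-1/2}$. For $C$ large this is much smaller than $\rho$, contradicting nonzero degree onto a sphere of radius $\rho/2$.

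\textbf{Main obstacle.} Weakly bounded geometry enters in ensuring that the $\mu$--bubbles exist and are regular, and that curvature and diameter estimates apply uniformly; in dimension four the regularity is automatic. The hard step is the linking-to-degree transfer in Step 1: showing that $\pi|_M$ has nonzero degree onto $S^2$ using only $H_3(X)=0$ and the linking number. There I would first try to compute via Poincaré–Lefschetz duality on the compact piece $U$, pairing $[M]$ with $W$ and $F$.
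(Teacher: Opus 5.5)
Your route is genuinely different from the paper's, and the first $\mu$-bubble is fine. The dimension reduction inside $M^3$, however, does not work as written.

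\textbf{The gap: the descent inside $M$.} You propose a second $\mu$-bubble in $M$ ``using the map $\pi|_M\to S^2$ of degree $\operatorname{Lk}(F,\alpha)$, with Lipschitz constant $L$ onto a sphere of radius $\sim\rho$.'' This does not define a problem. A map from a closed $3$-manifold to $S^2$ has no degree, and its level sets are curves. The surfaces it does cut out, $(\pi|_M)^{-1}(\text{curve})$, map into curves, so no band defined by $\pi|_M$ has cross-sections mapping to $S^2$ with nonzero degree. The width $\rho/L$ plays no role at this stage.

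Moreover, $M$ is only spectrally positive: $\lambda_1(-2\Delta_M+R_M)\ge\delta$, with $\delta$ comparable to $\Lambda$. An unweighted $\mu$-bubble in $M$ therefore gives no curvature control. Your sentence about the ``boundary-free core'' stands in for a diameter lemma you have not stated.

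The repair has four parts.
\begin{enumerate}
\item Take a $2$-cycle $c\subset M$ with $\deg(\pi|_c)\neq 0$ (constructed below).
\item Realize $\mathrm{PD}[c]\in H^1(M;\mathbb Z)$ by a map $M\to S^1$ and pass to the infinite cyclic cover $p\colon\hat M\to M$. There, two lifts of $c$ bound bands of arbitrarily large width.
\item Run a \emph{warped} $\mu$-bubble in such a band, weighted by the lifted first eigenfunction $u$ of $-2\Delta_M+R_M$. Equivalently, run an $S^1$-invariant $\mu$-bubble in $(\hat M\times S^1,\,g_M+u^2dt^2)$, whose scalar curvature equals $\lambda_1\ge\delta$; this is the same Fischer-Colbrie--Schoen warped product the paper uses. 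After substituting $\psi=\phi u^{1/2}$, the resulting surface $S$, homologous to a lift of $c$, satisfies $\lambda_1(-\tfrac43\Delta_S+K_S)\ge c_0\delta$.
\item Apply a spectral Bonnet--Myers lemma for surfaces (Schoen--Yau, Shen--Ye): each component of $S$ has diameter at most $C\delta^{-1/2}$.
\end{enumerate}
Only then does your ending go through. Each component of $S$ has $\pi\circ p$-image of diameter $\lesssim L\delta^{-1/2}<\rho/2$, lying outside $B^3(y,\rho/2)$. Hence each image is null-homotopic in $\mathbb R^3\setminus\{y\}$, contradicting $\deg(\pi\circ p|_S)=\deg(\pi|_c)\neq0$.

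\textbf{The topological step is routine.} The step you call the main obstacle is easy, though your formulation of it is off. A representative $\Sigma$ of $\alpha$ cannot be pushed into $\pi^{-1}(S^2(r))$, since it is a class at infinity. Instead:
\begin{itemize}
\item Let $\hat\Omega\supset F$ be the compact domain with $\partial\hat\Omega=M$, and choose $j$ so large that $\Sigma_j\cap\hat\Omega=\emptyset$.
\item Make $W$, with $\partial W=\Sigma_j$, transverse to $M$. Then $c:=W\cap M$ bounds $W\cap\hat\Omega$ inside $\hat\Omega$.
\item Radially projecting from $y$ gives $\deg(\pi|_c)=\pm(W\cap\hat\Omega)\cdot F=\pm\operatorname{Lk}(F,\alpha)$.
\end{itemize}
You should also center the balls at a regular value $y$. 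This is harmless: fibers over two regular values cobound the compact surface $\pi^{-1}(\text{path})$, so the linking number does not depend on the choice.

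\textbf{Comparison with the paper.} The paper argues globally. It modifies the metric so that $\partial\Omega_j$ is mean convex and solves Plateau problems for the far-out cycles $\Sigma_j$. It then uses weakly bounded geometry together with the stable Bernstein theorem in $\mathbb R^4$ to extract a complete stable minimal $M^3_\infty$ meeting $F$. Finally it applies the Gromov--Lawson relative index theorem on $M_\infty\times S^1$, with a bundle pulled back from $S^4$ by a map built from $\varphi_\rho\circ\pi$.

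Your localized $\mu$-bubble descent, once completed, uses only $R_g$ on $\pi^{-1}(B^3(\rho))$. It needs no spin structure and no Bernstein theorem. It also needs no weakly bounded geometry: contrary to your remark, $\mu$-bubbles in compact bands exist and are regular for $n\le 7$ without it. The price is that the final diameter step relies on the last slice being a surface, which ties the argument to dimension four. The paper's index-theoretic mechanism is the one it proposes to extend to higher-dimensional spin settings.
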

\begin{rem}
Let us briefly clarify the terminology in the theorem. 
The space $H_2^\infty(X) := \varprojlim_K H_2(X \setminus K)$ denotes the homology at infinity (see Section \ref{sec:prelim}). 
For a class $\alpha \in \ker(H_2^\infty(X) \to H_2(X))$, its representative $\alpha_K$ bounds a $3$--chain $W_K$ in $X$ for sufficiently large $K$; the linking number at infinity $\operatorname{Lk}(F,\alpha)$ is defined as the stabilized intersection number of the fiber $F$ with $W_K$ (see Section \ref{sec:sc_decay}). 
Finally, $\operatorname{Dil}_1(\pi_\rho)$ is the $1$--dilation (i.e., the supremum of the operator norm of the differential) of the restricted map $\pi_\rho$.
\end{rem}
Conceptually, Theorem~\ref{thm:link_infinity} shows that
topological complexity at infinity,
encoded by a nontrivial linking condition in homology at infinity,
forces the scalar curvature to decay at a controlled rate.
This phenomenon is detected via the formation of stable minimal hypersurfaces
carrying natural spin structures,
which allow us to invoke index-theoretic arguments in a non-compact setting.
This approach is conceptually analogous to the Bochner method on stable minimal hypersurfaces (cf.~\cite{miyaoka1993l2, bei20262}). In our setting, we instead investigate harmonic solutions of the spinor Dirac operator to capture the curvature decay.
The resulting obstruction prevents scalar curvature from remaining uniformly large
at infinity.

The structural similarity between this mechanism and the stable minimal hypersurfaces
appearing in positively curved $4$--manifolds,
as described in \cite[Example~2]{chodosh2024complete},
is particularly noteworthy.

As an application of Theorem~\ref{thm:link_infinity}, we investigate a complete smooth
Riemannian metric $g$ on $\mathbb{R}^2 \times T^2$ with positive scalar curvature and
quadratic decay at infinity, recently discovered in \cite{gromov2018metric}.
After lifting this metric to an infinite cyclic cover
$X = \widetilde{\mathbb{R}^2 \times T^2}$,
we recover a polynomial decay estimate for the scalar curvature.
Although the resulting bound is not optimal compared with the explicit curvature
computation, it relies on a new technical mechanism of a different nature.

More precisely, for the natural proper map
\[
\pi \colon X \to \mathbb{R}^3 ,
\]
we obtain
\[
\min_{\pi^{-1}(B^3(\rho))} R_g
\sim
\frac{1}{\rho^2}
\;\lesssim\;
\left(\frac{\operatorname{Dil}_1(\pi_\rho)}{\rho}\right)^2
\sim
\rho^{-2\alpha}.
\]
Here the explicit quadratic decay follows from a direct computation,
while the upper bound is a consequence of Theorem~\ref{thm:link_infinity},
illustrating how linking at infinity enforces scalar curvature decay
at large scales.
This will be discussed in Section~\ref{sec:sc_decay}.

Corollary~\ref{cor:thm:link_infinity} illustrates how Theorem~\ref{thm:link_infinity}
naturally leads to polynomial decay estimates for scalar curvature under
quantitative domination assumptions.
In particular, it provides a flexible framework that applies to
metrics exhibiting slow decay,
including the example constructed by Gromov~\cite{gromov2018metric}.

\begin{cor}[Corollary~\ref{cor:thm:link_infinity}]
Let $(X^4,g)$ be a complete oriented Riemannian manifold
with weakly bounded geometry and non-negative scalar curvature.
If $(X,g)$ is $\alpha$--weakly dominated over $\mathbb R^3$ (see Definition~\ref{def:weak_domination}) by a map
\(
\pi \colon X \to \mathbb R^3,
\)
then for all $\rho>0$,

\[
\min_{\pi^{-1}(B^3(\rho))} R_g
\;\lesssim\;
\rho^{-2\alpha} .
\]
\end{cor}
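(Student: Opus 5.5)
The corollary should follow from Theorem~\ref{thm:link_infinity} once Definition~\ref{def:weak_domination} is unpacked. I expect $\alpha$--weak domination of $(X,g)$ over $\mathbb R^3$ by $\pi$ to contain two kinds of data:
\begin{itemize}
\item topological data: $\pi$ is proper and smooth, $H_3(X)=0$, and there is a class $0\neq\alpha\in\ker(H_2^\infty(X)\to H_2(X))$ with $\operatorname{Lk}(F,\alpha)\neq 0$ for a regular fiber $F$;
\item a growth bound on the $1$--dilation, $\operatorname{Dil}_1(\pi_\rho)\le C_0\,\rho^{1-\alpha}$ for all $\rho\ge \rho_0$.
\end{itemize}
If the definition phrases the topology differently (for example, via $F$ linking a $2$--cycle at infinity that bounds in $X$), the first step is to check that this yields exactly the linking hypothesis \eqref{eq:alpha-linking-assumption}, using the description of $\operatorname{Lk}$ in Section~\ref{sec:sc_decay}.

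With the hypotheses matched, apply Theorem~\ref{thm:link_infinity} with its universal constant $C$. For every $\rho>0$ this gives
\[
\min_{\pi^{-1}(B^3(\rho))} R_g \;\le\; C\left(\frac{\operatorname{Dil}_1(\pi_\rho)}{\rho}\right)^2 .
\]
For $\rho\ge\rho_0$, inserting the dilation bound gives
\[
\min_{\pi^{-1}(B^3(\rho))} R_g \;\le\; C\,C_0^2\,\rho^{-2\alpha}.
\]

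It remains to treat small radii $0<\rho<\rho_0$. There are two cases.
\begin{itemize}
\item If $\pi^{-1}(B^3(\rho))$ is empty, the minimum is vacuous and there is nothing to prove.
\item Otherwise, the domains $\pi^{-1}(B^3(\rho))$ increase with $\rho$, so $\rho\mapsto \min_{\pi^{-1}(B^3(\rho))}R_g$ is non-increasing. Hence for $\rho<\rho_0$ the minimum is at most $\sup_{\pi^{-1}(\overline{B^3(\rho_0)})} R_g$. This is finite because $\pi$ is proper, so $\pi^{-1}(\overline{B^3(\rho_0)})$ is compact and $R_g$ is bounded there. Since $\rho^{-2\alpha}\ge\rho_0^{-2\alpha}$ on this range (for $\alpha\ge 0$), enlarging the implicit constant absorbs it.
\end{itemize}
Alternatively, if the definition requires the dilation bound for all $\rho>0$, this case analysis is unnecessary.

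The only real obstacle is matching Definition~\ref{def:weak_domination} exactly to the hypotheses of Theorem~\ref{thm:link_infinity}. In particular, the implicit constant in $\lesssim$ must depend only on the domination constants and not on $\rho$. The analytic content lies entirely in the theorem.
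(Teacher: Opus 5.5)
Your proposal is correct and follows the paper's argument. Definition~\ref{def:weak_domination} gives $H_3(X)=0$ and $\deg_\infty(\pi)\neq 0$; since $\mathcal K$ is a group, this means some regular fiber has nonzero linking with some (necessarily nonzero) $\beta\in\mathcal K$, so the hypotheses of Theorem~\ref{thm:link_infinity} hold, and substituting $\operatorname{Dil}_1(\pi_\rho)\le C\rho^{1-\alpha}$ gives the bound with constant $C_{\mathrm{thm}}C^2$. The definition imposes the dilation bound for all $\rho>0$, so your separate treatment of small radii is not needed.
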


\medskip
\paragraph{\textbf{Higher-dimensional small-circle obstructions}}\mbox{}
Our second main result develops a localized obstruction to uniformly positive scalar curvature.
The key geometric object is a \emph{small torus},
which can be regarded as  a natural higher-dimensional analogue of the
\emph{small circle} obstruction introduced by Gromov and Lawson
in their study of open $3$--manifolds; see, for example,
\cite{gromov1983positive}.
We defer the precise definition and a systematic treatment of this notion
to Section~\ref{sec:small_torus}.

Roughly speaking, a small torus is an embedded torus whose normal geometry,
together with its interaction with the topology at infinity of the ambient
manifold, allows it to be detected by stable minimal hypersurfaces arising
from $\mu$--bubble exhaustions.

Our second main result shows that the presence of such a torus already
precludes the existence of complete metrics of uniformly positive scalar curvature.

\begin{theorem}[Theorem~\ref{thm:small_torus_obstruction}]
For $n \leq 7$, let $X$ be an open $n$--manifold with $H_1(X)$ finitely generated.
If $X$ contains a small torus, then $X$ admits no complete metric of
uniformly positive scalar curvature.
Moreover, there exists an end $X_+ \subset X$ such that there is no
complete Riemannian metric $g$ on $X$ satisfying
\[
R_g \ge \sigma > 0 \quad \text{on } X_+ .
\]
\end{theorem}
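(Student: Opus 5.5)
The plan is to argue by contradiction. Suppose $g$ is complete on $X$ with $R_g \ge \sigma > 0$ on an end $X_+$; the end will be chosen below so that it is the one seen by the small torus. The torus is detected, via the topology at infinity, by a class that pairs nontrivially with a degree-one cohomology class of $X$. Since $H_1(X)$ is finitely generated, such a degree-one class can be realized by a map $X\to S^1$; equivalently, we can pass to the associated infinite cyclic cover, where it becomes a proper function. I would choose $X_+$ to be the end in which the normal geometry of the small torus escapes to infinity, i.e.\ where its linking or intersection with the hypersurfaces dual to these degree-one classes is supported. Finite generation of $H_1(X)$ is used exactly here: it lets us pick finitely many classes $\theta_1,\dots,\theta_{n-2}\in H^1(X)$ whose cup product evaluates nontrivially on the torus class. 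It also ensures that the relevant covers and their ends are well behaved.

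The core step is a $\mu$-bubble exhaustion inside $X_+$. Since $R_g\ge\sigma$ there, the standard band-width estimate applies (Gromov, Chodosh--Li): any band in $X_+$ separating two boundary pieces at distance greater than $2\pi\sqrt{(n-1)/(n\sigma)}$ contains a $\mu$-bubble $\Sigma^{n-1}$. This $\Sigma$ is a compact hypersurface homologous to a level set of the proper function built from $\theta_1$. Moreover, it carries a metric, conformal via the first eigenfunction of the stability operator, with positive scalar curvature, or at least positive spectral scalar curvature. The dimension restriction $n\le 7$ guarantees that $\Sigma$ is smooth. The point of the small-torus hypothesis is that the torus, pushed far out into $X_+$, can be made to intersect $\Sigma$ in a torus $T^{n-2}$ with the classes $\theta_2,\dots,\theta_{n-2}$ restricting nontrivially. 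Thus $\Sigma$ inherits the same structure one dimension lower.

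I would then iterate, Schoen--Yau style. At each stage we cut by the next class $\theta_i$, take a stable minimal hypersurface or $\mu$-bubble inside the previous slice, and preserve spectral positivity of the scalar curvature using the warped-product (torical symmetrization) trick. The descent ends with a closed surface, or a circle-factor torus, that carries a positive-scalar-curvature warped product metric while having a nonzero degree map to $T^2$. This is impossible by Gauss--Bonnet, or by the torus obstruction of Gromov--Lawson/Schoen--Yau. Nonexistence of a uniformly PSC metric on all of $X$ then follows immediately, since such a metric would in particular satisfy $R_g\ge\sigma$ on $X_+$.

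The main obstacle is the localization. We must make sure that the $\mu$-bubble can be produced entirely inside $X_+$, and not merely somewhere in $X$. We also need it to still intersect the small torus essentially, even though nothing is assumed about $g$ on the rest of $X$. My first attempt would be to take the band between two far-apart level sets of the proper function on the cover, both lying in $X_+$. I would then use the defining property of a small torus, namely that it is homologous at infinity, within the end, to tori arbitrarily far out, to transport the nontrivial intersection into this band. Showing that the homological nontriviality survives each cut, so that the intersection class is never lost in the relative setting, is the step I expect to require the most care.
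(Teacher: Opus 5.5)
There is a genuine gap. Your argument never uses the part of the small-torus hypothesis that carries the obstruction: the \emph{meridian} (normal circle) survives in $H_1(X\setminus\iota(T^{n-2}))$.

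\textbf{The meridian is invisible to your data.} The meridian bounds a normal disk in $X$, so every class in $H^1(X)$ vanishes on it. Hence your $\theta_i$, and the cyclic covers built from them, cannot see this condition. The input you do extract (classes $\theta_1,\dots,\theta_{n-2}\in H^1(X)$ with nonzero cup product on $[T^{n-2}]$) cannot suffice. For example, $T^{n-2}\times\{q\}\subset (T^{n-2}\times S^2)\setminus\{\mathrm{pt}\}\cong (T^{n-2}\times S^2)\#\mathbb{R}^n$ has this property. Yet that manifold carries a complete metric with $R\ge\sigma>0$, by a Gromov--Lawson connected sum of $T^{n-2}\times S^2(\varepsilon)$ with a capped half-cylinder. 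There the meridian is null-homotopic in the complement, so the torus is not small. The extra property you lean on, that the torus is homologous within the end to tori arbitrarily far out, is not the definition, and you do not derive it.

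\textbf{The analytic set-up also fails.} For non-compact $X$, the lift of $\theta_1\colon X\to S^1$ to the infinite cyclic cover is in general not proper; in $T^{n-2}\times\mathbb{R}^2$ its level sets are $T^{n-3}\times\mathbb{R}^2$. So there is no compact band between level sets, and the $\mu$-bubble existence lemma does not apply. The iterated descent therefore has no compactness. Its bookkeeping also does not close up: after $n-2$ cuts, no classes remain to map the final surface with nonzero degree to $T^2$.

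\textbf{The missing idea} is to work with the hypersurface $Z\cong T^{n-2}\times S^1$ itself, the boundary of a tubular neighborhood, which contains the meridian. The paper's route runs as follows.
\begin{enumerate}
\item Set $Y=X\setminus\nu(\iota(T^{n-2}))$. Then $H_1(Y)$ is finitely generated by Mayer--Vietoris.
\item Clear denominators in a rational left inverse of $i_*\colon H_1(Z)\to H_1(Y)$. This gives $p\colon H_1(Y)\to H_1(Z)$ with $p\circ i_*$ injective.
\item Since $Z$ is a torus (an Eilenberg--MacLane space), $p$ is realized by a map $f\colon Y\to Z$ with $\deg(f|_Z)\neq 0$.
\item Take a single $\mu$-bubble in the compact collar $\{x\in Y: d(x,Z)\le D\}$, with $D$ beyond the band-width constant. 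It cobounds a compact region with $Z$, so some component $\Sigma$ has $\deg(f|_\Sigma)\neq 0$.
\item That $\Sigma$ is spectrally positive, hence conformal to a PSC metric (a sphere if $n=3$). This contradicts the Schoen--Yau/Gromov--Lawson torus obstruction for closed manifolds of dimension $n-1\le 6$.
\end{enumerate}
The end $X_+$ is the component of $Y$ bounded by $Z$. No descent, no cyclic covers and no proper functions are needed.
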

Unlike settings where the obstruction is detected via a compactification of the ends,
as in \cite[Theorem~3]{chodosh2024generalized},
our approach applies directly to non-compact geometries.
The obstruction is intrinsically local at infinity:
the presence of a single suitably embedded torus on an end already precludes
uniform positivity of scalar curvature on that end.
\medskip
\paragraph{\textbf{Obstructions from cohomology at infinity.}}\mbox{}
The third theme further develops obstructions arising from topology at infinity,
now encoded by cohomology classes detected by a single proper ray.
The proper ray $\gamma$ canonically determines a cohomology class at infinity,
given by its Poincaré dual $\alpha_\gamma$, as defined in
Proposition~\ref{prop:ray-infinity}.

In Section~\ref{sec:psc_cycle}, we prove that
if such a class decomposes as a cup product of degree--one classes
coming from infinity,
then uniformly positive scalar curvature is impossible in dimensions up to seven.

\begin{theorem}[Theorem~\ref{thm:Ray_obstruction}]
Let $n \le 7$.
Let $X^n$ be a connected oriented non-compact manifold, and let
$\varepsilon$ be an isolated end of $X$.
Assume that there exists a proper ray
$\gamma \subset \varepsilon$
whose Poincar\'e dual at infinity
\[
\alpha_\gamma \in H^{n-1}_\infty(\varepsilon)
\]
admits a decomposition
\[
\alpha_\gamma
=
u^1 \cup u^2 \cup \cdots \cup u^{n-1},
\qquad
u^i \in H^1_\infty(\varepsilon).
\]
Then there exists no complete Riemannian metric $g$ on $X$
satisfying
\[
R_g \ge \sigma > 0
\quad \text{on } \varepsilon .
\]

\end{theorem}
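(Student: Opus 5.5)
Suppose, for contradiction, that $g$ is a complete metric on $X$ with $R_g \ge \sigma > 0$ on $\varepsilon$. The plan is to run a dimension-descent argument of Schoen--Yau type on the end, using the degree-one classes $u^i$ to cut out nested hypersurfaces. Since the end is only \emph{partially} uniformly positive, every slicing step is localized with $\mu$--bubbles rather than area-minimizers.

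\textbf{Step 1: localize.} Because $\varepsilon$ is isolated, we can pick a compact hypersurface $\Sigma_0$ separating $\varepsilon$ from the rest of $X$, and work in the closure $E$ of the component containing $\varepsilon$. Each $u^i \in H^1_\infty(\varepsilon)$ is represented, on $E\setminus K$ for a large compact $K$, by a map $f_i \colon E\setminus K \to S^1$. Equivalently, it is represented by a properly embedded two-sided hypersurface $Z_i$ that is Poincar\'e dual to $u^i$ at infinity.

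The hypothesis $\alpha_\gamma = u^1\cup\cdots\cup u^{n-1}$ translates, via Proposition~\ref{prop:ray-infinity}, into an intersection statement. After making the hypersurfaces transverse, $Z_1\cap\cdots\cap Z_{n-1}$ is a proper $1$--dimensional submanifold that is homologous at infinity to the ray $\gamma$. In particular, it meets every sufficiently far cross-section of the end with algebraic count $\pm 1$.

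\textbf{Step 2: descend with $\mu$--bubbles.} Choose a distance-like proper function $\rho$ on $E$ and a band $E_{[a,b]}=\rho^{-1}[a,b]$ of width $b-a$ far larger than $\sigma^{-1/2}$. We then construct inductively:
\begin{itemize}
\item $\Sigma^{n-1}\subset E_{[a,b]}$, a warped $\mu$--bubble in the class dual to $u^1$ (relative to the band boundaries);
\item then $\Sigma^{n-2}\subset\Sigma^{n-1}$ dual to $u^2|_{\Sigma^{n-1}}$, and so on.
\end{itemize}
At each stage the stability inequality yields a weighted spectral positivity of the form $-\Delta+\tfrac12 R \ge c\sigma$ on the new slice. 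This is the standard ``stabilized $\mu$--bubble'' or torical-band descent of Gromov and of Chodosh--Li \cite{chodosh2024generalized}, and it is available because $n\le 7$ guarantees regularity.

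The nontriviality of each slice is inherited from the cup-product decomposition. For $u^1\cup\cdots\cup u^{n-1}$ to be nonzero on the band, the restriction $u^{k+1}\cup\cdots\cup u^{n-1}$ to $\Sigma^{n-k}$ must remain nonzero relative to the boundary. So the descent never terminates early.

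\textbf{Step 3: contradiction in dimension one.} After $n-1$ steps we obtain a $1$--dimensional stable weighted object. Tracking the degree to the ray $\gamma$, it is an arc in the band joining the two boundary components $\rho=a$ and $\rho=b$. Spectral positivity along this arc forces its length, and hence the band width, to be bounded by $C(n)\sigma^{-1/2}$. This contradicts our choice of $b-a$. Equivalently, one can stop at dimension two and use the Gauss--Bonnet band-width estimate.

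\textbf{Main obstacle.} The crux is Step 2's bookkeeping. We must show that the classes $u^i$, which are defined only at infinity, restrict to classes on the compact bands and on each successive $\mu$--bubble. We must also show that their cup product still evaluates to $\pm1$ on the relative fundamental class, so that the linking with $\gamma$ survives each cut.

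My first attempt would be to realize the $u^i$ as circle-valued maps $f_i$ on $E\setminus K$. I would then prove the relative-degree statement
\[
\deg\bigl(E_{[a,b]},\partial\bigr)\to [a,b]\times T^{n-1} \neq 0,
\]
read off from $\gamma$ hitting each cross-section once. This reduces the problem to the known torical band-width inequality \cite{Gromov2023FourLectures}, giving width at most $2\pi\sqrt{(n-1)/(n\sigma)}$.
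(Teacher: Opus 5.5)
Your proof is correct via the reduction in your last paragraph, but it is packaged differently from the paper's argument.

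\textbf{Your route.} You put the band map $\Phi=(f_1,\dots,f_{n-1},\rho)\colon E_{[a,b]}\to T^{n-1}\times[a,b]$ on a far band inside $\varepsilon\setminus K_j$, where the $u^i$ and the identity $\alpha_\gamma=u^1\cup\cdots\cup u^{n-1}$ already hold. You then compute
$\deg\Phi=\deg(\Phi|_{\rho^{-1}(b)})=\langle \alpha_\gamma,[\rho^{-1}(b)]\rangle=\gamma\cdot\rho^{-1}(b)=\pm1$,
using that $\alpha_\gamma$ is represented by a Thom form of $\gamma$. Finally you quote the torical band-width inequality. This buys a quantitative conclusion: every band in the end with $R\ge\sigma$ has width at most $2\pi\sqrt{(n-1)/(n\sigma)}$. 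It also covers $n=2$, which the paper's Proposition~\ref{prop:existence_band} does not. The price is a heavier black box.

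\textbf{The paper's route.} The paper never forms a band map. It applies Proposition~\ref{prop:existence_band} once in a wide band to get a closed $\mu$--bubble that is homologous to a cross-section and carries a conformal PSC metric. Because of that homology, the cup product of the restricted $u^i$ is non-zero on some component. That component therefore admits a non-zero degree map to $T^{n-1}$, i.e.\ it is SYS, contradicting Schoen--Yau in dimension $n-1\le 6$. This needs only the Section~2 machinery plus the classical closed-manifold theorem.

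\textbf{How they relate.} The two mechanisms coincide underneath. The qualitative content of the torical band inequality for $n\le7$ is exactly ``a $\mu$--bubble separating the band boundaries, then Schoen--Yau on it.'' Your degree computation is the precise form of the paper's looser claim that the $\mu$--bubble ``represents the same class'' as the SYS hypersurface.

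\textbf{Steps 2--3.} The descent there is unnecessary, and as written it does not work. A $\mu$--bubble with $h\to\pm\infty$ on $\partial^{\mp}$ is a closed hypersurface separating the two band boundaries. So it is dual to the $\rho$--direction, not to $u^1$. A hypersurface dual to $u^1$ runs from $\{\rho=a\}$ to $\{\rho=b\}$ and would need a free-boundary problem you have not set up. For the same reason, the terminal ``arc joining the two boundaries,'' with a length bound from spectral positivity, is not justified. The working order is: first a $\mu$--bubble in the $\rho$--direction, then Schoen--Yau descent on that closed hypersurface using $u^1,\dots,u^{n-3}$. This ends at a closed surface on which $u^{n-2}\cup u^{n-1}\neq0$, which contradicts Gauss--Bonnet.

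\textbf{A minor point.} If the $u^i$ have real coefficients, as in the paper's Step~1, you need integral classes to get maps $f_i$ to $S^1$. This is harmless: $H^1$ of the compact band is spanned over $\mathbb{R}$ by integral classes, and the top-degree evaluation is multilinear.
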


A key technical tool in the proof is the $\mu$--bubble exhaustion
\cite[Section~3.7.2]{Gromov2023FourLectures},
\cite[Proposition~3.1]{otis2024complete},
which produces a family of hypersurfaces with spectrally positive scalar curvature
adapted to the large--scale geometry.

\subsection{Organization of the paper}

The paper is organized as follows.
In Section~\ref{sec:mu-bubble-exhaustions}, we review background material on the method of
$\mu$--bubble exhaustions, including the existence of $\mu$--bubbles and
the resulting exhaustion theory
(cf.~Lemma~\ref{lem:existence_mu_bubbles} and
Theorem~\ref{thm:exhaustion}).
Section~\ref{sec:prelim} introduces the basic topological framework used
throughout the paper, including the notions of homology and cohomology
at infinity and their intersection properties
(see Definitions~\ref{def:Homologyatinfinity} and
\ref{def:Cohomologyatinfinity}, and
Lemma~\ref{lem:intersection-hypersurface}).
In Section~\ref{sec:sc_decay}, we prove the main decay estimate based on
linking at infinity
(Theorem~\ref{thm:link_infinity}), together with a model example by Gromov's constructions
(Example~\ref{ex:gromov_quadratic_decay}).
Section~\ref{sec:small_torus} establishes new obstructions to the existence of complete metrics of
positive scalar curvature
(Theorem~\ref{thm:small_torus_obstruction}).
In Section~\ref{sec:psc_cycle}, we study positive scalar curvature in relation to cohomology classes at infinity, establishing qualitative obstructions (Theorem~\ref{thm:Ray_obstruction}). 
Appendix~\ref{sec:weakly_bdd} reviews the notion of weakly bounded geometry
and collects auxiliary curvature estimates
(see Lemma~\ref{lem:stable-curvature-estimate}),
while Appendix~\ref{sec:construction_qdecay} describes a model metric (cf.~\eqref{eq:metric}) with
quadratic scalar curvature decay and the associated dilation estimates.
Finally, Appendix~\ref{sec:appendix_quantitative} develops a quantitative refinement of obstructions from Theorem~\ref{thm:Ray_obstruction}, yielding lower bounds on the $1$--dilation of proper maps (Theorem~\ref{thm:quantitative_sys_at_infinity}).
\section*{Conventions and Notation}

This section fixes analytic and geometric conventions used throughout the paper.
All sign choices are made so that curvature positivity corresponds to spectral
positivity of the associated differential operators.
\subsection*{Riemannian conventions}

All manifolds are assumed to be smooth, connected, and oriented unless stated otherwise.
For a smooth function $f$, we denote by $\nabla f$ its gradient and by
$\operatorname{div}_g$ the Riemannian divergence.
The Laplace--Beltrami operator acting on functions is defined by
\[
\Delta := \operatorname{div}_g \circ \nabla .
\]
With this convention, the operator $-\Delta$ is non-negative.

Let $(\Sigma^m,h)$ be an $m$--dimensional Riemannian manifold with $m \ge 3$.
The conformal Laplacian is defined by
\[
L_\Sigma
:=
- \frac{4(m-1)}{m-2}\,\Delta_\Sigma + R_\Sigma .
\]
We write $R_g$ and $\operatorname{Ric}_g$ for the scalar curvature and the Ricci curvature
of a Riemannian manifold $(M,g)$, respectively.

\subsection*{Caccioppoli sets and reduced boundary}

Following \cite{simon2014introduction}, a measurable set $\Omega \subset (M^n,g)$ is called a \emph{Caccioppoli set}
if its characteristic function $\chi_\Omega$ has bounded variation.
Equivalently, the distributional gradient $D\chi_\Omega$ is a finite
Radon measure.

The \emph{reduced boundary} of $\Omega$ is denoted by $\partial^*\Omega$.
It consists of those points $x \in M$ for which there exists a measure--theoretic
unit normal vector $\nu_\Omega(x)$ satisfying
\[
\lim_{r \to 0}
\frac{D\chi_\Omega(B_r(x))}{|D\chi_\Omega|(B_r(x))}
= \nu_\Omega(x),
\quad
|\nu_\Omega(x)| = 1.
\]

The perimeter of $\Omega$ in an open set $U \subset M$ is given by
\[
\mathrm{Per}(\Omega,U)
:=
|D\chi_\Omega|(U)
=
\mathcal{H}^{n-1}(\partial^*\Omega \cap U).
\]

All Hausdorff measures $\mathcal{H}^k$ are taken with respect to the
distance induced by the ambient Riemannian metric $g$.

\subsection*{Dirac operators}

All spin manifolds are assumed to be equipped with a fixed spin structure.
If $E \to M$ is a Hermitian vector bundle with unitary connection $\nabla^E$,
we denote by $\slashed{D}_E$ the Dirac operator twisted by $E$, $
\slashed{D}_E \colon \Gamma(\mathbb{S}\otimes E) \to \Gamma(\mathbb{S}\otimes E).
$
The corresponding Weitzenb\"ock formula is
\[
\slashed{D}_E^2
=
\nabla^* \nabla
+
\frac{1}{4} R_g
+
\mathcal{R}_E ,
\]
where $\mathcal{R}_E$ denotes the curvature term induced by $\nabla^E$.
Positivity or negativity of $\mathcal{R}_E$ is always understood in the sense of
pointwise Hermitian endomorphisms.

\subsection*{Dilation}

Let $(X,g_X)$ and $(Y,g_Y)$ be Riemannian manifolds and
$f \colon X \to Y$ a smooth map.
The \emph{$1$--dilation} of $f$ is defined by
\[
\operatorname{Dil}_1(f)
:=
\sup_{x \in X} \| df_x \|_{\mathrm{op}}
=
\sup_{x \in X}
\sup_{v \in T_x X \setminus \{0\}}
\frac{|df_x(v)|_{g_Y}}{|v|_{g_X}} .
\]
In this paper, all dilation estimates refer to $\operatorname{Dil}_1$ unless explicitly stated
otherwise.

\section{\texorpdfstring{$\mu$}{mu}-Bubble Exhaustions}\label{sec:mu-bubble-exhaustions}
The basic strategy we use originates in the work of Schoen and Yau
\cite{schoen1979existence,schoen1987structure}, who analyzed the second variation of area for
area-minimizing hypersurfaces. Their key observation is that if
$\Sigma^{n-1}$, $n \ge 3$, is a two-sided stable minimal hypersurface in a
Riemannian manifold $(M^n,g)$ with positive scalar curvature, then one can
conformally modify the induced metric $g|_\Sigma$ so that $\Sigma$ itself
admits a metric of positive scalar curvature. When combined with suitable
existence results for minimal hypersurfaces, this leads to strong
topological obstructions to positive scalar curvature.

From a structural point of view, the Schoen--Yau argument depends much more
on the second variation inequality than on minimality itself. Motivated by
this observation, and following \cite{chodosh2024generalized}, we adopt an idea of Gromov
\cite{Gromov2023FourLectures} in which minimal hypersurfaces are replaced by critical points
of a modified area functional. The advantage of this perspective is that
the associated minimization problem can be localized, while the resulting
second variation formula continues to impose obstructions to positive
scalar curvature in a range of geometric situations.

\subsection{Review of \texorpdfstring{$\mu$}{mu}-bubbles and band width}

We first focus on the existence of $\mu$-bubbles. For this purpose, we introduce the
notion of a Riemannian band. This section is based on \cite{richard2023small}, \cite{chodosh2024generalized}.
For the sake of completeness, we include a proof of the known properties of $\mu$-bubbles.
\begin{dfn}[Riemannian band]
A triple $(M,\partial^\pm,g)$ is called a \emph{Riemannian band} if $(M,g)$ is a compact
Riemannian manifold with nonempty boundary, and $\partial M$ decomposes as
\[
\partial M = \partial^- \sqcup \partial^+,
\]
where $\partial^-$ and $\partial^+$ are disjoint unions of boundary components.
\end{dfn}

Let $(M^n,g,\partial^\pm)$ be a Riemannian band and let $\Sigma_0$ be a fixed closed
two-sided hypersurface which separates $\partial^-$ and $\partial^+$. Denote by $\Omega_0$ the region
bounded by $\partial^-$ and $\Sigma_0$. We consider the class
\[
\mathcal{C} := \{\Omega \subset M \text{ Caccioppoli set } : \Omega \Delta \Omega_0
\Subset \mathring{M}\}.
\]

Given a smooth function $h$ defined on the interior $\mathring{M}$, we introduce the
functional
\[
\mathcal{A}_h(\Omega)
:= \mathcal{H}^{n-1}(\partial^*\Omega \cap \mathring{M})
 - \int_{\mathring{M}} (\chi_\Omega - \chi_{\Omega_0})\, h \, d\mathcal{H}^n_g .
\]

$\Omega$ which minimizes $\mathcal{A}_h$ in the class $\mathcal{C}$ is called a \emph{$\mu$-bubble}. One of the important advantage of $\mu$-bubbles is that they always exist  a given Riemannian band for a suitably imposed function $h$. We begin with the following lemma.

We next recall the first and second variation formulas for the $\mu$-bubble
functional. Let $\{\Omega_t\}$ be a smooth one-parameter family of regions with
$\Omega_0=\Omega$, and let $\phi$ denote the normal speed at $t=0$. Then the first
variation is given by
\[
\frac{d}{dt}\mathcal{A}_h (\Omega_t)
=
\int_{\partial \Omega_t} (H - h)\, \phi,
\]
where $H$ denotes the mean curvature of $\partial \Omega_t$. In particular, a
$\mu$-bubble satisfies
\[
H = h \quad \text{along } \Sigma := \partial \Omega .
\]

Assuming that $\partial \Omega = \Sigma$ satisfies $H = h$, the second variation of
$\mathcal{A}_h$ is given by
\begin{equation}\label{eq:mu-bubble-second-variation}
\left.\frac{d^2}{dt^2}\right|_{t=0} \mathcal{A}_h (\Omega_t)
=
\int_\Sigma
\left(
|\nabla_\Sigma \phi|^2
-
\frac12
\bigl(
R_M - R_\Sigma + |A|^2 + h^2 + 2\langle \nabla_M h, \nu \rangle
\bigr)
\phi^2
\right).
\end{equation}
Here $R_M$ denotes the scalar curvature of $M$, $R_\Sigma$ the scalar
curvature of the induced metric on $\Sigma$, $A$ the second fundamental form of
$\Sigma$, and $\nu$ the unit normal vector.

The minimizer $\Omega$ obtained above satisfies the stability inequality
\[
\left.\frac{d^2}{dt^2}\right|_{t=0} \mathcal{A}_h (\Omega_t) \ge 0
\quad
\text{for all } \phi \in C^\infty(\Sigma).
\]

The existence of a minimizer for $\mathcal{A}_h$ can be understood through the first variation formula. 
Indeed, for a suitable choice of the function $h$ (as specified in Lemma \ref{lem:existence_mu_bubbles} below), the boundary component $\partial^- M$ satisfies $H_{\partial^- M} - h = -\infty$. 
Consequently, $\partial^- M$ serves as a strict barrier for any minimizing sequence.
An analogous barrier arises from $\partial^+ M$. As a result,
any minimizing sequence $\{\Omega_j\}$ is forced to remain uniformly away from the
boundary, and in particular satisfies
\[
\Omega_j \triangle \Omega_0 \Subset \mathring{M}.
\]

Once this localization is established, standard compactness results from geometric
measure theory apply and produce a minimizer $\Omega$ with
$\Omega \triangle \Omega_0 \Subset \mathring{M}$. Moreover, since the functional
$\mathcal{A}_h$ differs from the area functional only by a lower-order perturbation
at small scales in the interior of $M$, classical regularity theory implies that
the boundary $\Sigma=\partial\Omega$ is a smooth compact hypersurface contained in
$\mathring{M}$. In dimensions $n\ge8$, singularities may occur and additional
arguments are required.
This heuristic discussion can be made precise as follows.
\begin{lem}
\label{lem:existence_mu_bubbles}
Assume $n \le 7$ and that the function $h$ satisfies
\begin{equation}\label{eq:mu-bubble-boundary-limits}
\lim_{x \to \partial^-} h(x) = +\infty,
\qquad
\lim_{x \to \partial^+} h(x) = -\infty.
\end{equation}
Then there exists a smooth minimizer $\widehat{\Omega} \in \mathcal{C}$ of
$\mathcal{A}_h$.
\end{lem}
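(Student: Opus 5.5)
The plan is the standard barrier argument (as in Chodosh--Li and Zhu). The key idea is to show that a minimizing sequence can be pushed away from $\partial^\pm$, so the problem reduces to an interior minimization.

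\textbf{Step 1: replace the band by an interior one.} By \eqref{eq:mu-bubble-boundary-limits} we can choose $\varepsilon>0$ so small that three things hold. First, the collars $\partial^\pm\times[0,\varepsilon)$ (in normal exponential coordinates) lie in the regions where $\pm h$ is very negative. Second, $\Sigma_0$ avoids both collars. Third, the equidistant hypersurfaces $\{d(\cdot,\partial^-)=t\}$ have mean curvature $H_t$ (computed with respect to the normal pointing away from $\partial^-$) satisfying $H_t<h$ for $t\in(0,\varepsilon)$; this is possible because $|H_t|$ stays bounded while $h\to+\infty$. Symmetrically, near $\partial^+$ we get $H_t>h$ with respect to the orientation of $\partial\Omega$.

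\textbf{Step 2: cut-and-paste comparison.} Given any $\Omega\in\mathcal C$, let $\Omega'=(\Omega\cup N^-_t)\setminus N^+_t$, where $N^\pm_t$ are the $t$-collars and $t$ is chosen generic. We then claim $\mathcal A_h(\Omega')\le\mathcal A_h(\Omega)$. To see this, apply the divergence theorem to the unit vector field $X=\nabla d(\cdot,\partial^-)$ on $N^-_t\setminus\Omega$, using $\operatorname{div}X=-H$. This gives
\[
\mathcal H^{n-1}(\partial N_t^-\setminus\Omega)-\mathcal H^{n-1}(\partial^*\Omega\cap N^-_t)\le\int_{N^-_t\setminus\Omega}(-H)\le\int_{N_t^-\setminus\Omega}h,
\]
which is exactly the change in $\mathcal A_h$ (adding volume with weight $h$ lowers the functional). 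The $\partial^+$ side is handled the same way. Hence we may restrict to competitors with $\Omega\triangle\Omega_0$ contained in the fixed compact set $K=M\setminus(N^-_{\varepsilon/2}\cup N^+_{\varepsilon/2})$.

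\textbf{Step 3: compactness.} On $K$, the function $h$ is bounded, so $\mathcal A_h\ge \mathrm{Per}-\|h\|_{L^\infty(K)}\mathrm{vol}(M)$ is bounded below. A minimizing sequence then has uniformly bounded perimeter, and BV compactness together with lower semicontinuity of perimeter (the volume term is continuous in $L^1$) produces a minimizer $\widehat\Omega$ with $\widehat\Omega\triangle\Omega_0\subset K$. By Step 2 applied once more with a slightly smaller collar, $\widehat\Omega$ is also a minimizer among all of $\mathcal C$.

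\textbf{Step 4: regularity.} Since $h$ is bounded near $\partial\widehat\Omega$, the set $\widehat\Omega$ is an almost-minimizer of perimeter: for any ball, $\mathrm{Per}(\widehat\Omega,B_r)\le \mathrm{Per}(\Omega,B_r)+Cr^n$. The regularity theory for almost-minimizing boundaries (Tamanini; cf.\ \cite{simon2014introduction}) then gives smoothness of $\partial^*\widehat\Omega$ away from a singular set of dimension at most $n-8$, which is empty for $n\le7$. Bootstrapping the equation $H=h$ upgrades $C^{1,\alpha}$ regularity to $C^\infty$.

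The main obstacle is Step 2: one must fix orientations and sign conventions carefully, and handle the non-smooth $\partial^*\Omega$ in the calibration argument. For the latter, I will argue with BV slicing and the generalized Gauss--Green formula, choosing $t$ so that $\mathcal H^{n-1}(\partial^*\Omega\cap\partial N_t)=0$.
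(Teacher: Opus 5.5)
Your proposal is correct and follows essentially the same route as the paper. Both use the unit vector field $\nabla d(\cdot,\partial^\pm)$ and the Gauss--Green formula to show that replacing $\Omega$ by $(\Omega\cup N^-_t)\setminus N^+_t$ does not increase $\mathcal A_h$; both then confine minimizing sequences to a fixed compact set where $h$ is bounded, and conclude with BV compactness and Tamanini's regularity (empty singular set for $n\le 7$).

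One minor slip: with your stated orientation the identity is $\operatorname{div}X=\Delta d=H_t$, not $-H_t$. This is harmless, since $|H_t|$ is bounded while $h\to+\infty$, so both $\pm H_t\le h$ hold in the collar.
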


\begin{proof}
We set
\begin{equation*}\label{eq:def-I}
I := \inf \{ A_h(\Omega) \mid \Omega \in \mathcal C \}.
\end{equation*}
We first show that $I > -\infty$.
For $s>0$, define
\begin{equation*}\label{eq:def-sigma-s}
\Sigma_s^\pm := \{ x \in M^\circ \mid \operatorname{dist}(x,\partial^\pm)=s \}.
\end{equation*}
For sufficiently small $s$, the hypersurfaces $\Sigma_s^\pm$ form smooth
foliations in a neighborhood of $\partial^\pm$.
By the assumption, we may assume that there exists a small constant $s_0>0$ such that
\begin{equation}\label{eq:mean-curvature-barriers}
H_s^- \le h
\quad \text{and} \quad
H_s^+ \le -\, h
\qquad \text{for all } s \le s_0,
\end{equation}
where $H_s^\pm$ denotes the mean curvature of $\Sigma_s^\pm$
with respect to the $\partial_s$--direction.
Let $\Omega_s^\pm$ be the region bounded by $\Sigma_s^\pm$ and $\partial^\pm$.
Possibly shrinking $s_0$, we can construct a smooth vector field $X$ such that
$X=\partial_s$ on $\Omega_{s_0}^\pm$.
Then it is clear that
\begin{equation}\label{eq:div-X-minus}
\operatorname{div}_g X = H_s^- \le h \quad \text{in } \Omega_{s_0}^-,
\end{equation}
and
\begin{equation}\label{eq:div-X-plus}
\operatorname{div}_g X = H_s^+ \le -h \quad \text{in } \Omega_{s_0}^+.
\end{equation}

The generalized Gauss--Green theorem
(see \cite[Chapter~3]{simon2014introduction})
implies that
\begin{align*}
\int_{\Omega_{s_0}^-\setminus \Omega} \operatorname{div}_g X \, d\mathcal H^n_g
&\ge H^{n-1}\bigl(\partial \Omega_{s_0}^-\setminus \Omega\bigr)
 - H^{n-1}\bigl(\partial^*\Omega \cap \Omega_{s_0}^-\bigr),
\\
\int_{\Omega \cap \Omega_{s_0}^+} \operatorname{div}_g X \, d\mathcal H^n_g
&\ge H^{n-1}\bigl(\partial \Omega_{s_0}^+\cap \Omega\bigr)
 - H^{n-1}\bigl(\partial^*\Omega \cap \Omega_{s_0}^+\bigr).
\end{align*}
Therefore
\begin{equation}\label{eq:Ah-lower-bound}
A_h(\Omega)
\ge A_h\bigl(\Omega \cup \Omega_{s_0}^-\setminus \Omega_{s_0}^+\bigr)
\ge - C\, \mathcal H^n(M,g),
\qquad \forall\, \Omega \in \mathcal C,
\end{equation}
where $C$ is a universal constant such that
$|h|\leq C$ on $M\setminus(\Omega_{s_0}^+\cup \Omega_{s_0}^-)$.
Consequently,
\begin{equation}\label{eq:I-finite}
I > -\infty.
\end{equation}

We now establish the existence of a minimizer of $A_h$ in $\mathcal C$.
Let $\{\Omega_k\} \subset \mathcal C$ be a minimizing sequence such that
$A_h(\Omega_k) \to I$.
Using the calculation described above, we may assume that
\begin{equation}\label{eq:symmetric-difference-control}
\Omega_k \Delta \Omega_0 \subset M \setminus (\Omega^-_{s_0} \cup \Omega^+_{s_0}).
\end{equation}
For sufficiently large $k$, there holds
\begin{equation}\label{eq:perimeter-bound}
\mathcal H^{n-1}(\partial^* \Omega_k)
\le I + 1 + C\, \mathcal H^n(M,g).
\end{equation}
The compactness theorem for sets of finite perimeter yields, after extracting
a subsequence, a limit $\widehat{\Omega} \in \mathcal C$ with
\begin{equation}\label{eq:existence-minimizer}
A_h(\widehat{\Omega}) = I.
\end{equation}

It follows that $\widehat{\Omega}$ is a minimizer of $A_h$, and hence
$\partial \widehat{\Omega}$ is smooth by standard regularity theory
\cite{tamanini1984regularity}.
\end{proof}

\subsection{Construction of \texorpdfstring{$\mu$}{mu}-Bubble exhaustions}
A fundamental result due to Gromov asserts that
a complete Riemannian manifold with uniformly positive scalar curvature
admits an exhaustion by compact domains whose boundaries carry
positive scalar curvature metrics
\cite[Section~3.7.2]{Gromov2023FourLectures}.
A detailed proof via $\mu$--bubble constructions
is presented in \cite[Proposition~3.1]{otis2024complete}.

\begin{prop}
\label{prop:existence_band}
Let $3\le n\le 7$ and let $(M^n,g)$ be a Riemannian manifold with non-empty boundary
satisfying
$
R_M \ge \Lambda > 0 .
$
Assume that the boundary decomposes as
$
\partial M = \partial^- M \sqcup \partial^+ M,
$
where both components are non-empty.

Then there exists a constant
$
D(\Lambda):=\frac{2\pi}{\sqrt{\Lambda}}
$
such that if
\[
D:=d(\partial^- M,\partial^+ M) > D(\Lambda),
\]
there exists a smoothly embedded, closed, two-sided hypersurface
\[
\Sigma^{n-1} \subset \operatorname{int}(M)
\]
which admits a Riemannian metric of positive scalar curvature.
Moreover, if $n\ge4$, the first eigenvalue of the conformal Laplacian
$L_\Sigma$ satisfies
\[
\lambda_1(L_\Sigma)\ge \Lambda-\frac{\pi^2}{D^2}.
\]
\end{prop}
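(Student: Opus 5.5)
The plan is the standard $\mu$-bubble argument on the band $M$. Let $\rho(x)$ be a smoothing of $d(x,\partial^- M)$, rescaled so that it runs over the interval $(-D/2, D/2)$, with $\partial^-$ corresponding to $-D/2$ and $\partial^+$ to $+D/2$. Choose the weight
\[
h(x) = -\frac{2\pi}{D'}\tan\!\Bigl(\frac{\pi\rho(x)}{D'}\Bigr),
\qquad D(\Lambda)<D'<D .
\]
Strictly, this is defined on a sub-band where $|\rho|<D'/2$, and we restrict to that sub-band. This $h$ satisfies the boundary limits \eqref{eq:mu-bubble-boundary-limits}, so Lemma~\ref{lem:existence_mu_bubbles} (with $n\le 7$) produces a smooth minimizer $\widehat\Omega$. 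Its boundary $\Sigma=\partial\widehat\Omega$ is a closed, two-sided, embedded hypersurface in the interior of $M$. It is nonempty because it separates $\partial^-$ from $\partial^+$.

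Next I would feed $\Sigma$ into the stability inequality coming from \eqref{eq:mu-bubble-second-variation}. Use $R_M\ge\Lambda$ and $|A|^2\ge H^2/(n-1)=h^2/(n-1)$, and note $|\nabla\rho|\le 1$ after smoothing, which gives
\[
2\langle\nabla h,\nu\rangle \ge -2|\nabla h| \ge -\frac{2\pi^2}{D'^2}\sec^2\!\Bigl(\frac{\pi\rho}{D'}\Bigr).
\]
The key computation is that, with the normalization above, the combination
\[
\Lambda + \tfrac{n}{n-1}h^2 - 2|\nabla h|
\]
is bounded below by $\Lambda - c\pi^2/D'^2$, because the $\tan^2$ terms cancel against $\sec^2=1+\tan^2$. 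Since $D'>2\pi/\sqrt\Lambda$, this lower bound is positive. The constants have to be tuned so that the $\tan^2$ cancellation is exact; that tuning is precisely what fixes $D(\Lambda)=2\pi/\sqrt\Lambda$.

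The resulting inequality is
\[
\int_\Sigma |\nabla\phi|^2+\tfrac12 R_\Sigma\phi^2 \;\ge\; \tfrac12\Bigl(\Lambda-\tfrac{\pi^2}{D'^2}\Bigr)\int_\Sigma\phi^2 .
\]
For $n\ge4$, where $\dim\Sigma=m\ge3$, the coefficient $\tfrac{4(m-1)}{m-2}$ is at least $2$. Hence
\[
\int_\Sigma \tfrac{4(m-1)}{m-2}|\nabla\phi|^2+R_\Sigma\phi^2 \ge \Bigl(\Lambda-\tfrac{\pi^2}{D'^2}\Bigr)\int_\Sigma\phi^2,
\]
which is $\lambda_1(L_\Sigma)\ge\Lambda-\pi^2/D'^2$. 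Letting $D'\uparrow D$ gives the stated bound, provided minimizers can be taken for each $D'$. If they cannot, I would state the bound with $D'$ arbitrarily close to $D$ and pass to the limit in the constant only. The first eigenfunction $u>0$ of $L_\Sigma$ then gives the positive scalar curvature metric $u^{4/(m-2)}g_\Sigma$.

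For $n=3$, $\Sigma$ is a surface. Here I would instead take $\phi\equiv1$ on each component and use Gauss--Bonnet to get a component with positive Euler characteristic, i.e. a sphere, which carries a round positive scalar curvature metric.

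I expect the main obstacle to be the precise constant bookkeeping in the choice of $h$. Obtaining exactly $\pi^2/D^2$ rather than a dimension-dependent multiple requires matching the coefficient $\tfrac{n}{n-1}$ against the $1/2$ factors, and handling the non-smooth distance function via a smoothing with $|\nabla\rho|\le 1+\epsilon$.
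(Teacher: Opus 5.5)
Your argument follows the paper's route: a $\mu$-bubble for the weight $h=-\tfrac{2\pi}{D}\tan(\cdot)$ built from a rescaled distance function, the cancellation $\tan^2-\sec^2=-1$, then positivity of the conformal Laplacian for $n\ge 4$ and Gauss--Bonnet for $n=3$. Keeping $|A|^2\ge h^2/(n-1)$ only adds a nonnegative surplus with this weight. The gap is the displayed inequality with $\tfrac12(\Lambda-\pi^2/D'^2)$, which does not follow from your $h$. Write $\theta:=\pi\rho/D'$. Then $h^2=\tfrac{4\pi^2}{D'^2}\tan^2\theta$ and $2|\nabla h|\le\tfrac{4\pi^2}{D'^2}\sec^2\theta$, hence
\[
\Lambda+\tfrac{n}{n-1}h^2-2|\nabla h|\;\ge\;\Lambda+\tfrac{4\pi^2}{D'^2}\Bigl(\tfrac{1}{n-1}\tan^2\theta-1\Bigr)\;\ge\;\Lambda-\tfrac{4\pi^2}{D'^2}.
\]
So what your argument actually proves is $\lambda_1(L_\Sigma)\ge\Lambda-4\pi^2/D'^2$. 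This is also the only version consistent with your own remark that positivity requires $D'>2\pi/\sqrt{\Lambda}$. A lower bound $\Lambda-\pi^2/D'^2$ would already be positive for $D'>\pi/\sqrt{\Lambda}$.

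More careful bookkeeping cannot close this, because no weight in this argument gives $\pi^2/D^2$. Suppose $\tfrac{n}{n-1}h^2-2|\nabla h|\ge -c$ pointwise, and set $k=\tfrac{n}{2(n-1)}$. Along a path from $\partial^-$ to $\partial^+$ of length close to $D$, the function $\arctan\bigl(h\sqrt{2k/c}\bigr)$ changes by $\pi$ while its derivative is bounded in absolute value by $\sqrt{kc/2}$. This forces $c\ge\tfrac{4(n-1)}{n}\,\tfrac{\pi^2}{D^2}>\tfrac{\pi^2}{D^2}$, and $c\ge 4\pi^2/D^2$ if you discard $|A|^2$ as the paper does. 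The factors $\tfrac12$ play no role, since they cancel once the stability inequality is multiplied by $2$.

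The paper's own proof likewise yields only $\lambda_1(L_\Sigma)\ge\Lambda-4\pi^2/D^2$. This is the $\delta$ of \eqref{eq:def-delta}, and the one used in Theorem~\ref{thm:exhaustion}. So the $\pi^2/D^2$ in the statement appears to be a misprint, and you should state and prove the $4\pi^2/D^2$ bound rather than assert the stronger one. Read literally, with no requirement that $\Sigma$ separate $\partial^-$ from $\partial^+$, the statement is met by any sufficiently small geodesic sphere; the real content is the separating $\mu$-bubble.

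A minor point: you get the bound for every $D'<D$ rather than at $D$ itself, since the bubbles depend on $D'$. The paper sidesteps this by asserting a smoothing with dilation at most $\pi/D$, which it glosses over. Since only strict positivity is used later, your $D'$ version suffices.
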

Proposition~\ref{prop:existence_band} serves as the basic local geometric input for what
follows.
It asserts that whenever a region has two boundary components separated by a
sufficiently large distance, one can find a closed hypersurface with positive
scalar curvature sitting between them.
In dimensions $n\ge4$, this hypersurface enjoys a quantitative form of
positivity, expressed by a uniform lower bound on the first eigenvalue of its
conformal Laplacian.
Crucially, both the existence of the hypersurface and the spectral estimate
depend only on the scalar curvature lower bound of the ambient manifold and on
the separation scale between the boundary components.

In the next step, we explain how this local construction can be combined with
the large--scale topology at infinity to produce global obstructions.
Roughly speaking, the topology forces the appearance of regions with widely
separated boundary components, while Proposition~\ref{prop:existence_band}
guarantees the presence of positively curved hypersurfaces spanning such
regions.
The tension between these two effects will ultimately lead to the desired
contradiction.

The key point is that this construction is flexible and can be applied
iteratively. By viewing a non-compact manifold as built from a sequence of
wide annular regions, Proposition~\ref{prop:existence_band} can be used as a
building block to produce hypersurfaces of positive scalar curvature at
arbitrarily large scales.
\begin{thm}[{\cite[Section~3.7.2]{Gromov2023FourLectures}},
{\cite[Proposition~3.1]{otis2024complete}}]\label{thm:exhaustion}
Let $n\le7$ and let $(M^n,g)$ be a complete non-compact Riemannian manifold
with scalar curvature $R_M\ge \Lambda>0$. Then there exists an exhaustion
\[
\Omega_1\subset\Omega_2\subset\Omega_3\subset\cdots\subset M
\]
by compact domains such that each boundary $\partial\Omega_i$ is smooth and
has positive scalar curvature in the spectral sense.

In particular, when $n\ge4$, the exhaustion may be chosen so that, for arbitrary large $D>D(\Lambda)$, writing
$\Sigma_i:=\partial\Omega_i$, one has
\[
\lambda_1(L_{\Sigma_i})\ge \delta(D)>0
\]
for all $i$, where
\[
\delta(\Lambda):=\Lambda-\frac{4\pi^2}{D^2}.
\]
\end{thm}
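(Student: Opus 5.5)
The plan is to stack Proposition~\ref{prop:existence_band} along a sequence of nested annular bands going out to infinity. By completeness and non-compactness, I fix a base point $p$ and an exhaustion of $M$ by the compact sets $K_j := \overline{B}(p, jL)$, where $L > D(\Lambda) = 2\pi/\sqrt{\Lambda}$ is a fixed width. After smoothing the distance function to $p$, I can take the $K_j$ to have smooth boundary, possibly replacing the radii by regular values $r_j$ with $r_{j+1}-r_j \ge L$.

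\textbf{Step 1 (bands).} I apply the construction to the compact bands $M_j := K_{j+1}\setminus \operatorname{int}K_j$. The inner boundary is $\partial^- M_j = \partial K_j$ and the outer boundary is $\partial^+ M_j = \partial K_{j+1}$, and these are at distance $\ge L$. Each $M_j$ may be disconnected, so I treat each component that meets both boundary pieces separately. Components meeting only one boundary piece can simply be absorbed into the next domain.

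\textbf{Step 2 ($\mu$-bubbles).} On each such band I choose $h$ as in the proof of Proposition~\ref{prop:existence_band}, namely $h = -\tfrac{2\pi}{D}\tan\!\bigl(\tfrac{\pi}{D}(\rho - D/2)\bigr)$ after rescaling. Here $\rho$ is a smoothed distance to $\partial^-$ with $|\nabla\rho|\le 1$, and $D$ is slightly less than $L$. This $h$ satisfies \eqref{eq:mu-bubble-boundary-limits}, and it satisfies the Riccati-type inequality $\Lambda + h^2 - 2|\nabla h| \ge \Lambda - 4\pi^2/D^2 > 0$ (with the constant adjusted to match $\delta$ in the statement). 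Lemma~\ref{lem:existence_mu_bubbles} then gives a smooth minimizer $\widehat\Omega_j$ containing a neighborhood of $\partial^-M_j$. Plugging this into the second variation \eqref{eq:mu-bubble-second-variation} and using the Gauss equation, I get for $\Sigma_j = \partial\widehat\Omega_j\cap\mathring M_j$ the spectral inequality
\[
\int_{\Sigma_j} |\nabla\phi|^2 + \tfrac12 R_{\Sigma_j}\phi^2 \ge \tfrac12\Bigl(\Lambda - \tfrac{4\pi^2}{D^2}\Bigr)\int_{\Sigma_j}\phi^2 .
\]
Since $\tfrac{4(n-2)}{n-3}\ge 2$ for $n\ge4$, this dominates and yields $\lambda_1(L_{\Sigma_j}) \ge \delta > 0$. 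For $n=3$ it yields positivity of $-\Delta + \tfrac12 R$, which is the spectral positive scalar curvature (PSC) condition.

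\textbf{Step 3 (exhaustion).} I define $\Omega_j := K_j \cup \widehat\Omega_j$. This is compact with smooth boundary $\Sigma_j$, satisfies $K_j \subset \Omega_j \subset K_{j+1}$, and therefore $\Omega_j \subset \Omega_{j+2}$. Passing to the subsequence of even (or odd) indices gives a nested exhaustion, since the $K_j$ exhaust $M$.

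\textbf{Main obstacle.} I expect the main difficulty to be the bookkeeping rather than the analysis. First, I must ensure the minimizer lies strictly inside each band component and that unbounded pieces do not break the nesting. Second, I must choose a smoothing of the distance function that keeps the Lipschitz bound $|\nabla\rho|\le 1+\epsilon$. The extra $\epsilon$ is exactly what forces the factor $4\pi^2/D^2$ rather than $\pi^2/D^2$, and it is absorbed by taking $D$ arbitrarily large.
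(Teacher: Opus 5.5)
Your proof is correct, but it is organized differently from the paper's.

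\textbf{Comparison of the two routes.} The paper builds the exhaustion inductively. The band $A_{k+1}=M'_{k+1}\setminus\operatorname{int}\Omega_k$ has the previous $\mu$-bubble $\Sigma_k$ as its inner boundary, and $\Omega_{k+1}=\Omega_k\cup W_{k+1}$, so nesting is automatic. The price is that $\bigcup_i\Omega_i=M$ depends on the bubbles actually advancing. The paper asserts this in Step~3 by taking $M'_i$ large, although nothing in Proposition~\ref{prop:existence_band} prevents a bubble from lying close to the inner boundary of its band.

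You instead fix the bands $K_{j+1}\setminus\operatorname{int}K_j$ in advance and solve the $\mu$-bubble problems independently. This uses the fact that only the outer boundary of $\Omega_j$ must be spectrally positive, so the inner boundary of a band can be an arbitrary level set. You get $K_j\subset\Omega_j\subset K_{j+1}$, so exhaustion is free and nesting only costs passing to every other index. As a bonus, consecutive retained hypersurfaces are a definite distance apart (roughly $L$). The proof in Appendix~\ref{sec:appendix_quantitative} invokes exactly this separation, and the paper's inductive construction does not obviously provide it.

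\textbf{Points to tidy up.}
\begin{enumerate}
\item Components of $M_j$ meeting only $\partial K_j$ must be added to $\Omega_j$ itself, not deferred; otherwise pieces of $\partial K_j$ appear in $\partial\Omega_j$. Components meeting only $\partial K_{j+1}$ are simply left out, and they lie in $K_{j+1}\subset\Omega_{j+1}$.
\item Since you take $D<L$, your $h$ blows up on $\{\rho=D\}$ inside the band. So Lemma~\ref{lem:existence_mu_bubbles} should be applied on the sub-band $\{0\le\rho\le D\}$, with $D$ a regular value.
\item The smoothing error is not absorbed by taking $D$ large. With $|\nabla\rho|\le 1+\epsilon$ and $\theta=\frac{\pi}{D}(\rho-D/2)$, you only get $h^2-2|\nabla h|\ge-\frac{4\pi^2}{D^2}\bigl(1+\epsilon\sec^2\theta\bigr)$, which is unbounded below near the ends of the band for every $D$. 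Instead, replace $\rho$ by $\rho/(1+\epsilon)$; this costs only a factor $(1+\epsilon)^{-1}$ in the width.
\item The constant $4\pi^2/D^2$, rather than $\pi^2/D^2$, comes from the $\tan$ ansatz with $|A|^2$ discarded, not from the smoothing.
\end{enumerate}
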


\begin{proof}[Proof of Proposition~\ref{prop:existence_band}]
The argument is based on constructing a suitable auxiliary function that
separates the two boundary components. It suffices to find a function
\begin{equation*}
\rho : M \longrightarrow \Bigl[-\frac{\pi}{2},\frac{\pi}{2}\Bigr]
\end{equation*}
with the following properties:
\begin{enumerate}
\item $\rho^{-1}(-\frac{\pi}{2})=\partial^- M$ and
      $\rho^{-1}(\frac{\pi}{2})=\partial^+ M$,
\item $\rho$ is smooth on $\operatorname{int}(M)$,
\item $\operatorname{Dil}(\rho)\le \frac{\pi}{D}$.
\end{enumerate}
Such a function can be obtained by starting from the distance function to
$\partial^- M$, smoothing near the cut locus, and rescaling so that
$\frac{\pi}{2}$ is a regular value.

We now define a function $h$ on $\operatorname{int}(M)$ by
\begin{equation*}
h(x):=-\frac{2\pi}{D}\tan(\rho(x)).
\end{equation*}
A direct computation shows that
\begin{equation*}
|\nabla_M h|\le \frac{2\pi^2}{D^2}\sec^2(\rho),
\end{equation*}
and hence
\begin{equation}\label{eq:h-inequality}
h^2+2\langle\nabla_M h,\nu\rangle
\;\ge\;
h^2-2|\nabla_M h|
=
\frac{4\pi^2}{D^2}\bigl(\tan^2\rho-\sec^2\rho\bigr)
=
-\frac{4\pi^2}{D^2}.
\end{equation}

Let $\Sigma$ be a hypersurface produced by the $\mu$--bubble construction
associated with $h$. Applying the stability inequality
\eqref{eq:mu-bubble-second-variation}, using the scalar curvature bound
$R_M\ge\Lambda$, and discarding the non-negative term $|\mathring A|^2$, we obtain
\begin{equation}\label{eq:stability-preliminary}
\int_\Sigma
|\nabla_\Sigma\varphi|^2
+\frac12 R_\Sigma\varphi^2
-\frac12\left(\Lambda-\frac{4\pi^2}{D^2}\right)\varphi^2
\;\ge\;0
\end{equation}
for all $\varphi\in C_c^\infty(\Sigma)$.

Setting
\begin{equation}\label{eq:def-D-Lambda}
D(\Lambda)^2=\frac{4\pi^2}{\Lambda}
\end{equation}
and assuming $D>D(\Lambda)$, we define
\begin{equation}\label{eq:def-delta}
\delta:=\Lambda-\frac{4\pi^2}{D^2}>0.
\end{equation}
Then
\begin{equation}\label{eq:psc-stability}
\int_\Sigma
|\nabla_\Sigma\varphi|^2
+\frac12 R_\Sigma\varphi^2
\;\ge\;
\frac{\delta}{2}\int_\Sigma\varphi^2 .
\end{equation}

If $n=3$, then $\Sigma$ is two-dimensional and $R_\Sigma=2K_\Sigma$.
Applying \eqref{eq:psc-stability} with $\varphi\equiv1$ on a connected component
$\Sigma'$ yields
\begin{equation*}
2\pi\chi(\Sigma')=\int_{\Sigma'}K_\Sigma>0,
\end{equation*}
and hence $\Sigma'$ is diffeomorphic to $S^2$ (or $\mathbb{RP}^2$ in the
non-orientable case), which admits positive scalar curvature.

If $n\ge4$, set $m=\dim\Sigma=n-1\ge3$. Since
\begin{equation*}
\frac{2(m-2)}{m-3}\ge1,
\end{equation*}
inequality \eqref{eq:psc-stability} implies
\begin{equation}\label{eq:stability-high-dim}
\int_\Sigma
\frac{2(m-2)}{m-3}|\nabla_\Sigma\varphi|^2
+\frac12 R_\Sigma\varphi^2
\;\ge\;
\frac{\delta}{2}\int_\Sigma\varphi^2 .
\end{equation}
Equivalently, the conformal Laplacian
\begin{equation*}
L_\Sigma:=-\frac{4(m-2)}{m-3}\Delta_\Sigma+R_\Sigma
\end{equation*}
is a positive operator. Let $u>0$ be its first eigenfunction,
$L_\Sigma u=\lambda u$ with $\lambda\ge\delta>0$.
Then the conformally rescaled metric
\begin{equation*}
\tilde g_\Sigma:=u^{\frac{4}{m-3}}g_\Sigma
\end{equation*}
has positive scalar curvature.
\end{proof}

\begin{proof}[Proof of Theorem~\ref{thm:exhaustion}]
Let $M$ be a non-compact manifold. We construct an exhaustion of $M$ by compact
domains whose boundaries admit positive scalar curvature. The construction is
inductive and relies on Proposition~\ref{prop:existence_band}.

Let $D>D(\Lambda)$ be the constant from Proposition~\ref{prop:existence_band} and fix a sufficiently
small $\varepsilon>0$.

\medskip
\noindent\textbf{Step 1: Construction of the initial domain.}
Choose a point $p\in M$ and let $\{K_j\}_{j=1}^\infty$ be a compact exhaustion of
$M$. We select a smooth compact domain $M'_1\subset M$ such that
\begin{equation}\label{eq:step1-containment}
K_1\cup B_{D+\varepsilon}(p)\subset \operatorname{int}(M'_1),
\qquad
d(p,\partial M'_1)>D+\varepsilon.
\end{equation}
Define
\begin{equation*}
A_1:=M'_1\setminus \operatorname{int}(B_\varepsilon(p)).
\end{equation*}
Its boundary decomposes as
\begin{equation*}
\partial A_1=\partial^-A_1\sqcup\partial^+A_1,
\qquad
\partial^-A_1=\partial B_\varepsilon(p)\cong S^{n-1},
\quad
\partial^+A_1=\partial M'_1.
\end{equation*}

Applying Proposition~\ref{prop:existence_band} to $(A_1,g|_{A_1})$, we obtain a compact domain
$W_1\subset A_1$ whose boundary satisfies
\begin{equation*}
\partial W_1=\partial^-A_1\cup\Sigma_1,
\end{equation*}
where $\Sigma_1$ is a smooth hypersurface admitting a metric of positive scalar
curvature. We then define
\begin{equation*}
\Omega_1:=B_\varepsilon(p)\cup W_1.
\end{equation*}
By construction, $\partial\Omega_1=\Sigma_1$ has positive scalar curvature.

\medskip
\noindent\textbf{Step 2: Inductive construction.}
Assume that for some $k\ge1$ we have constructed compact domains
\begin{equation*}
\Omega_1\subset\Omega_2\subset\cdots\subset\Omega_k
\end{equation*}
such that each $\Omega_j$ is smooth and $\partial\Omega_j=\Sigma_j$ admits
positive scalar curvature.

Choose a smooth compact domain $M'_{k+1}\subset M$ containing $\Omega_k$ and
satisfying
\begin{equation}\label{eq:distance-Mk1}
d(\partial M'_{k+1},\Sigma_k)>D.
\end{equation}
Set
\begin{equation*}
A_{k+1}:=M'_{k+1}\setminus \operatorname{int}(\Omega_k),
\end{equation*}
so that
\begin{equation}\label{eq:boundary-Ak1}
\partial A_{k+1}=\Sigma_k\sqcup \partial M'_{k+1},
\end{equation}
with $\Sigma_k$ serving as the inner boundary.

Applying Proposition~\ref{prop:existence_band} to $A_{k+1}$, we obtain a compact domain
$W_{k+1}\subset A_{k+1}$ whose boundary decomposes as
\begin{equation}\label{eq:boundary-Wk1}
\partial W_{k+1}=\Sigma_k\cup\Sigma_{k+1},
\end{equation}
where $\Sigma_{k+1}$ lies strictly in the interior of $A_{k+1}$ and each of its
connected components admits positive scalar curvature. We then define
\begin{equation}\label{eq:def-Omegak1}
\Omega_{k+1}:=\Omega_k\cup W_{k+1}.
\end{equation}
By construction, $\partial\Omega_{k+1}=\Sigma_{k+1}$ has positive scalar curvature.

\medskip
\noindent\textbf{Step 3: Exhaustion.}
Iterating this procedure yields a nested sequence of compact domains
\begin{equation*}
\Omega_1\subset\Omega_2\subset\Omega_3\subset\cdots\subset M.
\end{equation*}
By choosing $M'_i$ at each step to contain a sufficiently large sublevel set of a
proper exhaustion function on $M$, we ensure that
\begin{equation*}
M=\bigcup_{i=1}^\infty \Omega_i.
\end{equation*}
This completes the construction of an exhaustion of $M$ by compact domains with
boundary admitting positive scalar curvature.
\end{proof}

\end{intro}

\section{Topological Preliminaries}\label{sec:prelim}
Non-compact manifolds often carry rich and subtle topology hidden near
infinity.
In this section, we introduce several tools designed to extract and
organize this information.
The basic idea is that if non-trivial topology persists arbitrarily far
out, then it must eventually manifest itself through the appearance of
hypersurfaces escaping to infinity.

Later, these hypersurfaces will be realized geometrically using minimal
surface techniques and $\mu$--bubbles, thereby providing a link between
topology at infinity and obstructions coming from scalar curvature.

\subsection{Ends of Non-compact Manifolds}
Throughout this section, all spaces are assumed to be realized by connected locally finite
simplicial complexes.
Moreover, all groups are assumed to be finitely generated unless otherwise specified. The content of this section is based on \cite{balacheff2025complete}, \cite{hughes1996ends} and \cite{chodosh2023classifying}.

Ends describe the large-scale shape of a space.
They tell us how many ``directions to infinity'' the space possesses,
and whether these directions eventually merge or remain separated.
While the definition is purely topological, ends will later control
how hypersurfaces can or cannot separate infinity under curvature
constraints.

Let $W$ be a non-compact topological space.

\begin{dfn}
A \emph{neighbourhood of an end} of $W$ is a subspace $U \subset W$ which
contains a connected component of $W \setminus K$ for some non-empty
compact subspace $K \subset W$.
\end{dfn}

\begin{dfn}
An \emph{end} of a non-compact space $W$ is an equivalence class of sequences
of connected open subsets
\[
W \supset U_1 \supset U_2 \supset \cdots
\]
satisfying
\[
\bigcap_{i=1}^{\infty} \overline{U_i} = \varnothing .
\]
Two such sequences $(U_i)$ and $(V_j)$ are equivalent if for every $i$ there
exists $j$ such that $U_i \subset V_j$, and for every $j$ there exists $i$
such that $V_j \subset U_i$.
\end{dfn}

We define the \emph{set of topological ends} of $X$, denoted by $\operatorname{Ends}(X)$, as follows. Consider the directed system of compact sets $K \subset X$ such that $X \setminus K$ has no compact connected component.
For each such $K$, let $\pi_0(X \setminus K)$ be the set of connected components of
$X \setminus K$.
Then $\operatorname{Ends}(X)$ is defined as the inverse limit
\[
\operatorname{Ends}(X) := \varprojlim_{K} \pi_0(X \setminus K).
\]
The \emph{number of topological ends} of $X$ is the cardinality
\[
e(X) := |\operatorname{Ends}(X)|.
\]

A striking feature of ends is that, under mild hypotheses, they become
purely algebraic.
If a group acts cocompactly on a space, the topology at infinity of that
space is completely determined by the group. By \cite{epstein1962ends}, if a group $G$ acts cocompactly by covering
transformations on a locally finite simplicial complex $\bar X$, then the set
$\operatorname{Ends}(\bar X)$, and hence its cardinality $e(\bar X)$, depends only on the group $G$.
In particular, $e(\bar X)$ is a group invariant.

\begin{dfn}[Ends of a finitely generated group]\label{def:ends_group}
Let $G$ be a finitely generated group.
The set of ends of $G$ is defined by
\[
\operatorname{Ends}(G) := \operatorname{Ends}(\bar X),
\]
where $\bar X \to X$ is any regular covering of a finite simplicial complex $X$
with covering transformation group $G$.
The number of ends of $G$ is
\[
e(G) := |\operatorname{Ends}(G)|.
\]
\end{dfn}

In particular, for a finitely generated group $G$, the number $e(G)$ coincides with
the number of topological ends of its Cayley graph.
Similarly, if $M$ is a closed manifold, then
\[
e(\pi_1(M)) = |\operatorname{Ends}(\widetilde M)|,
\]
where $\widetilde M$ denotes the universal cover of $M$.

\begin{rem}\label{rem:ends_classification}
Despite the apparent flexibility of infinite spaces, the number of ends
of a finitely generated group can only be
$0,\;1,\;2,\;\text{or }\infty,
$ (see \cite{epstein1962ends}).

An immediate consequence of the definition is that $e(G)=0$ if and only if $G$ is finite.
Using covering space theory, one shows that $e(G)=2$ if and only if $G$ is virtually
infinite cyclic; that is, $G$ contains an infinite cyclic subgroup of finite index.
\end{rem}

\subsection{Homopoty group and (Co)homology group at infinity}
Ends detect connectedness at infinity, but they miss higher-dimensional
phenomena.
Cohomology at infinity refines this picture by encoding
cohomological classes that remain non-trivial for all sufficiently
large complements of compact sets.
For background on homology and cohomology at infinity and locally finite homology, see {\cite[Chapter~6]{spanier2012algebraic}, \cite[Chapter~5]{bredon1997sheaf}}.

Let $W$ be a non-compact manifold and let
\[
K_1 \subset K_2 \subset \cdots \subset W
\]
be an exhaustion of $W$ by compact subsets, i.e.\ $\bigcup_{j} K_j = W$.
\begin{dfn}\label{def:Homologyatinfinity}
With the same notation, the \emph{homology at infinity} of $W$ is defined by
\begin{equation}
    H_\ast^\infty(W)
\;:=\;
\varprojlim_{j} H_\ast(W \setminus K_j),
\end{equation}
where the inverse limit is taken with respect to the maps induced by
inclusion
\[
H_\ast(W \setminus K_{j+1}) \longrightarrow H_\ast(W \setminus K_j).
\]
This definition is independent of the choice of exhaustion.
\end{dfn}

Dually, cohomology at infinity is obtained as a direct limit.

\begin{dfn}\label{def:Cohomologyatinfinity}
Let $W$ be a non-compact manifold and let
\[
K_1 \subset K_2 \subset \cdots \subset W
\]
be an exhaustion by compact subsets with $W=\bigcup_j K_j$.
The \emph{cohomology at infinity} of $W$ is defined by

\begin{equation}
H^\ast_\infty(W)
\;:=\;
\varinjlim_{j} H^\ast(W \setminus K_j),
\end{equation}
where the direct limit is taken with respect to the restriction maps
\[
H^\ast(W \setminus K_j) \longrightarrow H^\ast(W \setminus K_{j+1}).
\]
Again, this definition does not depend on the choice of exhaustion.
\end{dfn}
On non-compact manifolds, it is often more natural to work with a homology
theory defined directly at the level of chains, allowing infinite chains
that are locally finite.
This leads to the notion of locally finite homology, which is well adapted
to the geometry and topology at infinity.

\begin{dfn}\label{def:Locallyfinitehomology}
The \emph{locally finite homology} of $W$, denoted by $H_\ast^{\mathrm{lf}}(W)$,
is defined as the homology of the chain complex
$C_\ast^{\mathrm{lf}}(W)$ consisting of (possibly infinite) singular chains
\[
c=\sum_i a_i \sigma_i
\]
such that every compact subset of $W$ intersects only finitely many
simplices $\sigma_i$ with non-zero coefficient.
\end{dfn}

The groups $H^\ast_\infty(W)$ encode the topology of $W$ near infinity.
Roughly speaking, a non-trivial class in $H^\ast_\infty(W)$ represents
cohomological information that cannot be killed by removing a compact set.
We record several basic properties and examples that will be used later in Section~\ref{sec:psc_cycle}.

Euclidean space serves as the basic model:
it has exactly one non-trivial class at infinity, corresponding to the
``sphere at infinity''.
\begin{ex}
For $n \ge 2$, the cohomology at infinity of $\mathbb{R}^n$ satisfies
\[
H^k_\infty(\mathbb{R}^n)
\;\cong\;
\begin{cases}
\mathbb{Z}, & k = 0, n-1, \\
0, & \text{otherwise}.
\end{cases}
\]
\end{ex}
Removing a large ball from $\mathbb{R}^n$ yields a space homotopy equivalent
to $S^{n-1}$.
Passing to the direct limit recovers the reduced cohomology of $S^{n-1}$.
\begin{ex}
Let $Y$ be a compact manifold.
Then
\[
H^\ast_\infty(Y \times \mathbb{R}^k)
\;\cong\;
H^{\ast}(Y \times S^{k-1}).
\]
\end{ex}
For tame manifolds, infinity acquires a boundary. In this case, topology at infinity is no longer mysterious—it is encoded entirely by the boundary of a compactification.
If $W$ is tame, so that $W=\operatorname{int}(\overline W)$
for a compact manifold with boundary $\overline W$, then the homology and
cohomology at infinity are completely determined by the boundary
$\partial\overline W$.
\begin{equation}\label{eq:tame_infty}
H^\ast_\infty(W) \;\cong\; H^\ast(\partial\overline W),
\qquad
H^{\infty}_\ast(W) \;\cong\; H_\ast(\partial\overline W).
\end{equation}
This follows from the fact that, for a tame manifold, the structure maps
in the directed system defining are eventually
isomorphisms, so the limit stabilizes.

\begin{lem}\label{lem:lf_rel}
Let $M$ be a non-compact $n$-dimensional manifold. Assume that $M$ is \emph{tame}, i.e.\ there exists a compact manifold $\overline M$ with boundary $\partial \overline M$ such that \[ M \cong \overline M \setminus \partial \overline M . \] Then for every $k$ there is a natural isomorphism \[ H_k^{\mathrm{lf}}(M) \;\cong\; H_k(\overline M,\partial \overline M). \] \end{lem}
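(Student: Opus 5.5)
The plan is to compare the locally finite chain complex of $M$ with the relative chain complex of the pair $(\overline M,\partial\overline M)$ through a collar of the boundary. By the collar neighbourhood theorem, fix an embedding $c\colon \partial\overline M\times[0,1)\hookrightarrow \overline M$ with $c(x,0)=x$. For $t\in(0,1)$ set
\[
K_t:=\overline M\setminus c\bigl(\partial\overline M\times[0,t)\bigr).
\]
These sets form a compact exhaustion of $M$ as $t\to0$, each $K_t$ is a deformation retract of $\overline M$, and each $M\setminus \operatorname{int}K_t\cong\partial\overline M\times(0,t]$.

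First I would use the standard description of locally finite homology as a limit of relative homologies,
\[
H_k^{\mathrm{lf}}(M)\;\cong\;\varprojlim_t H_k(M,M\setminus \operatorname{int}K_t),
\]
valid up to a $\varprojlim^1$ term. There is a short exact sequence
\[
0\to{\varprojlim_t}^{1} H_{k+1}(M,M\setminus \operatorname{int}K_t)\to H_k^{\mathrm{lf}}(M)\to\varprojlim_t H_k(M,M\setminus \operatorname{int}K_t)\to0,
\]
which comes from writing $C_*^{\mathrm{lf}}(M)=\varprojlim_t C_*(M,M\setminus\operatorname{int}K_t)$. The point is that a locally finite chain is determined by its images modulo chains supported far out, and the transition maps are surjective, so the Milnor sequence applies.

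Second, I would identify each term. By excision and homotopy invariance, using the collar,
\[
H_k(M,M\setminus\operatorname{int}K_t)\cong H_k\bigl(\overline M,\overline M\setminus\operatorname{int}K_t\bigr)\cong H_k(\overline M,\partial\overline M).
\]
Here $\overline M\setminus\operatorname{int}K_t\cong\partial\overline M\times[0,t]$ deformation retracts onto $\partial\overline M$, and the first isomorphism follows by applying the five lemma to the long exact sequences of the two pairs. In the latter, $M\hookrightarrow\overline M$ and $\partial\overline M\times(0,t]\hookrightarrow\partial\overline M\times[0,t]$ are homotopy equivalences.

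Third, I would check that the transition maps for $t'<t$ become the identity on $H_k(\overline M,\partial\overline M)$ under these identifications. This holds because all the identifications are induced by inclusions that commute up to homotopy, namely the radial collar push. The inverse system is therefore constant, so $\varprojlim^1=0$ and the limit equals $H_k(\overline M,\partial\overline M)$. Naturality follows since every map used is induced by inclusions or the collar.

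The main obstacle is the first step: justifying rigorously that $C^{\mathrm{lf}}_*(M)$ is the inverse limit of the relative chain complexes and that the Milnor sequence applies. This is where singular chains require care, because a singular simplex may meet both $K_t$ and its complement. I would handle this by working simplicially with a triangulation of $\overline M$ compatible with the collar, as the section permits, or by quoting the corresponding statement from \cite{bredon1997sheaf}. A cleaner alternative, which I would use if the chain-level argument becomes messy, is to apply Poincaré duality twice:
\[
H_k^{\mathrm{lf}}(M)\cong H^{n-k}(M)\cong H^{n-k}(\overline M)\cong H_k(\overline M,\partial\overline M),
\]
using Lefschetz duality for the last isomorphism. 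This route assumes orientability, or twisted coefficients otherwise.
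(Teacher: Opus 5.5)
Your proposal is correct. Your fallback is exactly the paper's proof, which chains Poincaré duality $H_k^{\mathrm{lf}}(M)\cong H^{n-k}(M)$, the homotopy equivalence $M\simeq\overline M$ coming from the collar, and Lefschetz duality $H^{n-k}(\overline M)\cong H_k(\overline M,\partial\overline M)$.

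Your main route is genuinely different. The paper's argument is three lines, but it relies on two duality theorems and on an orientation of $M$, which the paper has only through its standing convention that all manifolds are oriented. Your Milnor-sequence argument uses only the collar, the five lemma and the $\varprojlim^1$ sequence along a cofinal sequence $t_j\to 0$. It therefore holds for non-orientable $M$ and for arbitrary coefficients. It also exhibits the isomorphism concretely as induced by inclusions of pairs, which makes the claimed naturality transparent.

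Your argument is also easier than you fear in three places:
\begin{itemize}
\item The step you flag as the main obstacle is harmless. An element of $C_k(M,M\setminus\operatorname{int}K_t)$ is uniquely a finite sum of singular simplices meeting $\operatorname{int}K_t$, and every compact set lies in some $\operatorname{int}K_t$. Hence compatible families of such sums are exactly locally finite chains, so $C^{\mathrm{lf}}_*(M)=\varprojlim_t C_*(M,M\setminus\operatorname{int}K_t)$ holds verbatim for singular chains, with surjective bonding maps. No triangulation is needed.
\item In your third step the relevant squares commute on the nose, since every map is an inclusion of pairs, so no radial collar push is required.
\item In your second step the isomorphism comes from the five lemma applied to the two long exact sequences, not from excision.
\end{itemize}
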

\begin{proof}
We argue using Poincaré duality. As $M$ is oriented of dimension $n$, Poincaré duality for non-compact
manifolds yields a natural isomorphism
\[
H_k^{\mathrm{lf}}(M)\cong H^{n-k}(M).
\]

By the tameness assumption, $M=\operatorname{int}(\overline M)$.
Using a collar neighborhood of $\partial\overline M$, we can push
$\overline M$ slightly into $M$ and find a region of $M$ that is
diffeomorphic to $\overline M$.
In particular, $\overline M$ deformation retracts onto its interior $M$,
and hence $M$ and $\overline M$ are homotopy equivalent, and in particular
\[
H^{n-k}(M)\cong H^{n-k}(\overline M).
\]

Finally, since $\overline M$ is a compact oriented manifold with boundary,
Poincaré--Lefschetz duality gives
\[
H^{n-k}(\overline M)\cong H_k(\overline M,\partial\overline M).
\]

Combining these natural isomorphisms, we obtain
\[
H_k^{\mathrm{lf}}(M)\cong H_k(\overline M,\partial\overline M),
\]
as claimed.
\end{proof}
\begin{prop}
Let $M$ be a non-compact $n$-manifold which is \emph{tame}, i.e.\ there exists
a compact manifold $\overline M$ with boundary $\partial \overline M$ such that
\[
M \;\cong\; \operatorname{int}(\overline M)=\overline M\setminus\partial \overline M .
\]
Then there exists a natural homomorphism
\[
\partial_*: H^{\mathrm{lf}}_k(M)\;\longrightarrow\; H^\infty_{k-1}(M).
\]
\end{prop}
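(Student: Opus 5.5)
The plan is to build $\partial_*$ as a composition of natural maps that are already available for tame manifolds, namely Lemma~\ref{lem:lf_rel} together with the identification \eqref{eq:tame_infty}. First, Lemma~\ref{lem:lf_rel} gives the natural isomorphism
\[
H^{\mathrm{lf}}_k(M)\cong H_k(\overline M,\partial\overline M).
\]
Second, the long exact sequence of the pair $(\overline M,\partial\overline M)$ supplies the connecting homomorphism
\[
\delta\colon H_k(\overline M,\partial\overline M)\to H_{k-1}(\partial\overline M),
\]
which is natural in maps of pairs. Third, \eqref{eq:tame_infty} identifies $H_{k-1}(\partial\overline M)$ with $H^\infty_{k-1}(M)$. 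We then define $\partial_*$ as the composite of these three maps.

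The second point to address is that the identification $H_{k-1}(\partial\overline M)\cong H^\infty_{k-1}(M)$ should be made explicit. Choose a collar $c\colon\partial\overline M\times[0,1)\hookrightarrow\overline M$ and the exhaustion $K_j:=\overline M\setminus c(\partial\overline M\times[0,1/j))$. Then $M\setminus K_j=c(\partial\overline M\times(0,1/j))$ deformation retracts onto the slice $\partial\overline M\times\{1/(2j)\}$. The bonding maps $H_*(M\setminus K_{j+1})\to H_*(M\setminus K_j)$ are therefore isomorphisms, compatible with the projection to $\partial\overline M$. So the inverse system is constant, its limit is $H_*(\partial\overline M)$, and the $\varprojlim^1$ issue does not arise. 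Independence of the collar follows because any two collars are isotopic.

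It is also useful to give a chain-level description, which is how the map will be used later for linking numbers and is a consistency check. Given a locally finite cycle $c$ in $M$, truncate it: write $c=c_{K_j}+c'_j$, where $c_{K_j}$ consists of the simplices meeting $K_j$ after a subdivision. Then $\partial c_{K_j}=-\partial c'_j$ is a finite $(k-1)$-cycle supported in $M\setminus K_{j-1}$. Its class $\alpha_j\in H_{k-1}(M\setminus K_{j-1})$ is well defined, since changing the truncation changes $\partial c_{K_j}$ by the boundary of a finite chain supported near infinity. The classes $\alpha_j$ are compatible under the bonding maps, and a homology $c-c''=\partial b$ of locally finite chains changes $\alpha_j$ by $\partial(b_{K_j})$ plus boundaries near infinity. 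This shows the construction descends to $H^{\mathrm{lf}}_k(M)\to\varprojlim_j H_{k-1}(M\setminus K_j)$.

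Under the collar identification, this description agrees with the composite above, because the connecting map of the pair is computed in exactly the same way: take a relative cycle and take its boundary. Naturality then holds for proper maps $f\colon M\to N$ between tame manifolds that extend to maps of pairs of the compactifications.

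The main obstacle I expect is the well-definedness in the chain-level picture, which requires checking that the different choices of truncation are compatible with the limit. The tame hypothesis reduces this to a finite computation in a single collar, so I would argue primarily through the composite definition and treat the chain-level formula as a description of that map.
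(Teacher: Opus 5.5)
Your proposal is correct and follows the paper's proof. As there, you define $\partial_*$ as the composite of three maps: the isomorphism $H^{\mathrm{lf}}_k(M)\cong H_k(\overline M,\partial\overline M)$ from Lemma~\ref{lem:lf_rel}, the connecting homomorphism of the pair $(\overline M,\partial\overline M)$, and the identification $H_{k-1}(\partial\overline M)\cong H^\infty_{k-1}(M)$ coming from the collar structure of $M\setminus K$. Your explicit collar exhaustion and the chain-level truncation description go beyond what the paper writes (the latter even gives the map without using tameness), but they are not needed for the statement.
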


\begin{proof}
Since $M$ is tame, locally finite homology of $M$ can be identified
with the relative homology of the compactification $\overline M$.
More precisely, for every $k$ there is a natural isomorphism by Lemma~\ref{lem:lf_rel}
\begin{equation}
    H_k^{\mathrm{lf}}(M)\;\cong\; H_k(\overline M,\partial \overline M).
\end{equation}

Consider the long exact sequence in homology associated with the pair
$(\overline M,\partial \overline M)$:

\begin{equation}
\cdots \longrightarrow
H_k(\overline M)
\longrightarrow
H_k(\overline M,\partial \overline M)
\xrightarrow{\;\partial\;}
H_{k-1}(\partial \overline M)
\longrightarrow
H_{k-1}(\overline M)
\longrightarrow \cdots .
\end{equation}
On the other hand, for a tame manifold $M$ the homology at infinity
is naturally identified with the homology of the boundary (see \eqref{eq:tame_infty}):
\[
H_k^\infty(M)
\;=\;
\varinjlim_{K\Subset M} H_k(X\setminus K)
\;\cong\;
H_k(\partial \overline M).
\]
Indeed, for sufficiently large compact subsets $K\subset M$,
the complement $M\setminus K$ is homotopy equivalent to
$\partial \overline M\times [0,\infty)$.

Combining these identifications, we define

\begin{equation}
\partial_*:
H^{\mathrm{lf}}_k(M)
\;\xrightarrow{\;\cong\;}
H_k(\overline M,\partial \overline M)
\;\xrightarrow{\;\partial\;}
H_{k-1}(\partial \overline M)
\;\xrightarrow{\;\cong\;}
H^\infty_{k-1}(M).
\end{equation}
\end{proof}

\begin{lem}\label{lem:intersection-hypersurface}
Let $X^n$ be a non-compact oriented manifold, and let
$\sigma \subset X$ be a closed curve such that
$[\sigma] \in H_1(X)$ is a non-trivial, non-torsion class.
Then for any precompact open set $D \subset X$ containing a neighborhood
of $\sigma$, there exists a smooth compact $(n-1)$--dimensional submanifold
$M_0 \subset X$ with
\[
\partial M_0 \subset X \setminus D,
\]
such that the algebraic intersection number of $M_0$ and $\sigma$ is equal to $1$.
\end{lem}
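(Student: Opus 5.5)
The plan is to realize $M_0$ as the relative Poincaré--Lefschetz dual of a suitable integral degree-one cohomology class on a compact domain containing $D$. After that, the intersection number with $\sigma$ becomes the Kronecker pairing of this class with $[\sigma]$.

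\textbf{Step 1: a compact domain.} Since $\overline D$ is compact, choose a smooth compact codimension-zero submanifold $N \subset X$ with $\overline D \subset \operatorname{int}(N)$. Then $\sigma \subset \operatorname{int}(N)$, and $\partial N \subset X\setminus D$.

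\textbf{Step 2: a cohomology class.} Find $u \in H^1(N;\mathbb Z)$ with $\langle u, [\sigma]_N\rangle = 1$, where $[\sigma]_N \in H_1(N)$ is the class of $\sigma$ in $N$. Since $N$ is a compact manifold, $H_1(N)$ is finitely generated. Moreover, $H^1(N;\mathbb Z) \cong \operatorname{Hom}(H_1(N),\mathbb Z)$ because $\operatorname{Ext}(H_0(N),\mathbb Z)=0$. Hence such a $u$ exists exactly when the image of $[\sigma]_N$ in $H_1(N)/\mathrm{Tor}$ is nonzero and primitive. Nonvanishing holds because $[\sigma]$ is non-torsion in $H_1(X)$, and $H_1(N)\to H_1(X)$ carries torsion to torsion.

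\textbf{Step 3: from $u$ to $M_0$.} Represent $u$ by a smooth map $f\colon N \to S^1$, using $H^1(N;\mathbb Z)=[N,S^1]$. Make $f$ transverse to a regular value $\theta$ on $N$, on $\partial N$, and along $\sigma$, and set $M_0 := f^{-1}(\theta)$. This is a smooth compact, oriented, properly embedded hypersurface of $N$, with $\partial M_0 = M_0\cap \partial N \subset X\setminus D$. The algebraic intersection of $M_0$ with $\sigma$ is the degree of $f\circ\sigma\colon S^1\to S^1$, which equals $\langle u,[\sigma]\rangle = 1$. Equivalently, $[M_0]\in H_{n-1}(N,\partial N)$ is the Lefschetz dual of $u$, and the intersection pairing $H_{n-1}(N,\partial N)\times H_1(N)\to\mathbb Z$ is the Kronecker pairing. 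This step is routine.

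\textbf{Main obstacle: Step 2.} I must show that the class of $\sigma$ in $H_1(N)/\mathrm{Tor}$ is primitive, and the difficulty is that enlarging $N$ can only make $[\sigma]_N$ more divisible. My first approach is to show that $[\sigma]$ is primitive modulo torsion already in $H_1(X)$. Take a homomorphism $\phi\colon H_1(X)\to\mathbb Z$ with $\phi([\sigma])=1$, realize it by a map $X\to S^1$, and restrict to $N$; then Step 3 proceeds as above.

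If $[\sigma]=k\gamma$ with $|k|>1$, this approach breaks down. Once $N\supset\overline D$ is large enough that $\gamma$ is realized in $N$ with $k[\gamma]_N=[\sigma]_N$, every relative cycle with boundary outside $D$ meets $\sigma$ in a multiple of $k$. So intersection number $1$ is reachable only when the torsion-free part of $[\sigma]$ is primitive, and I would either verify this hypothesis in each application of Lemma~\ref{lem:intersection-hypersurface} or replace $\sigma$ by a primitive root of its class. Checking that primitivity holds in the intended applications is therefore where I expect the real work to lie.
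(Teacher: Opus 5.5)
Your Steps 1 and 3 are the paper's construction in slightly different language. The paper also enlarges $D$ to a precompact domain $A$, identifies the relevant relative class with an element of $H^1(A)\cong[A,S^1]$, and takes $M_0$ to be a regular fiber of the resulting map to $S^1$.

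Your concern about Step 2 is exactly right, and it is the point the paper's proof skips. From ``$\alpha$ is non-torsion'' the paper concludes, via the universal coefficient theorem, that some $\beta$ satisfies $\alpha\frown\beta=1$. The universal coefficient theorem only yields a $\beta$ with \emph{nonzero} pairing. In fact the lemma is false as stated. Let $n\ge 3$ and $X=S^1\times\mathbb{R}^{n-1}$. Let $\sigma\subset S^1\times B^{n-1}(1/2)$ be an embedded closed curve winding twice around the $S^1$ factor, so $[\sigma]=2\in H_1(X)\cong\mathbb{Z}$, and let $D=S^1\times B^{n-1}(1)$. There is a $2$-chain $c\subset D$ with $\partial c=\sigma-2\,(S^1\times\{0\})$. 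Hence every compact oriented $M_0$ with $\partial M_0\subset X\setminus D$ satisfies $M_0\cdot\sigma=2\,M_0\cdot(S^1\times\{0\})\pm\partial M_0\cdot c=2\,M_0\cdot(S^1\times\{0\})$, because $\partial M_0\cap D=\emptyset$. This number is always even, so no admissible $M_0$ has intersection number $1$.

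So what you have is a correct proof of the corrected statement, and it can be made sharp. The right hypothesis is that $[\sigma]$ is primitive in $H_1(X)/\mathrm{Tor}$, i.e.\ $[\sigma]\notin kH_1(X)+\mathrm{Tor}\,H_1(X)$ for all $|k|>1$. You do not need a global homomorphism $\phi\colon H_1(X)\to\mathbb{Z}$ with $\phi([\sigma])=1$, which need not exist when $H_1(X)$ is not finitely generated. Indeed, a decomposition $[\sigma]_N=k\gamma_N+t_N$ in $H_1(N)$ with $t_N$ torsion pushes forward to the same kind of decomposition in $H_1(X)$. So primitivity in $H_1(X)/\mathrm{Tor}$ forces $[\sigma]_N$ to be primitive in the finitely generated free group $H_1(N)/\mathrm{Tor}$ for \emph{every} compact $N\supset\sigma$, and your Step 2 then goes through.

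Conversely, the condition is necessary, but your last paragraph needs one correction. The chain realizing $m[\sigma]=mk\gamma$ (with $m$ killing the torsion part) must lie inside $D$ itself, not merely inside $N$. Since the lemma quantifies over all $D$, you may choose such a $D$. Then every admissible $M_0$ meets $\sigma$ in a multiple of $k$, exactly as in the example above, which refutes the statement and not just your approach. Under the hypotheses as written, your Steps 1--3 only produce an $M_0$ with nonzero intersection number, and that is the most one can claim.

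Finally, you need not check primitivity in applications. In the paper this lemma is only invoked ``by analogy'' in Step~2 of the proof of Theorem~\ref{thm:Ray_obstruction}, and there it concerns a proper ray rather than a closed curve.
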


\begin{proof}
Since $X$ is a non-compact oriented manifold, Poincar\'e duality provides an
isomorphism
\[
H_1(X) \;\cong\; H^{n-1}_c(X).
\]
It follows that there exists a connected precompact open subset
$A \subset X$ with $D \subset A$ and a cohomology class
\[
\alpha \in H^{n-1}(X, X \setminus A)
\]
satisfying

\begin{equation}
\alpha \frown \mu_A = [\sigma],
\end{equation}
where $\mu_A$ denotes the fundamental class of $A$.

By assumption, the class $[\sigma]$ is non-torsion.
Consequently, $\alpha$ is non-torsion in $H^{n-1}(X, X \setminus A)$.
By the universal coefficient theorem, there exists a class
\[
\beta \in H_{n-1}(X, X \setminus A)
\]
such that
\[
\alpha \frown \beta = 1 \in H_0(X, X \setminus A).
\]

Using excision together with Lefschetz duality, we obtain canonical
identifications

\begin{equation}
H_{n-1}(X, X \setminus A)
\;\cong\;
H^1(A)
\;\cong\;
[A, S^1]_*,
\end{equation}
where $[A,S^1]_*$ denotes the set of basepoint-preserving homotopy
classes of maps from $A$ to $S^1$.

Let $f \colon A \to S^1$ be a smooth representative corresponding to the
class $\beta$, and let $p \in S^1$ be a regular value of $f$.
Then the level set
\[
M_0 := f^{-1}(p) \subset A
\]
is a smooth compact hypersurface whose boundary is contained in
$X \setminus D$, and which represents the homology class $\beta$.
By construction, the algebraic intersection number of $M_0$
with $\sigma$ is equal to $1$.

\end{proof}
\begin{prop}\label{prop:ray-infinity}
Let $W$ be a connected, non-compact oriented $n$--manifold.
Any proper ray
\[
\gamma : [0,\infty) \longrightarrow W
\]
defines a non-trivial class in
\[
\alpha_\gamma \in H^{n-1}_\infty(W).
\]
\end{prop}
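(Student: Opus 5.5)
The plan is to construct $\alpha_\gamma$ explicitly as a Thom-type class of a tubular neighbourhood of the ray. Its non-triviality will be detected through the connecting homomorphism to compactly supported cohomology, where it hits the generator $1\in H^n_c(W)\cong\mathbb{Z}$.

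\emph{Construction.} After a small perturbation we may assume $\gamma$ is a smooth proper embedding. Since $\gamma$ is proper and $[0,\infty)$ is contractible, it has a closed tubular neighbourhood
\[
N\cong [0,\infty)\times D^{n-1}
\]
whose normal bundle is trivial and oriented compatibly with $W$ and with $\partial_t$. Let $\tau$ be a closed $(n-1)$--form on $N$ with compact support in each fibre $\{t\}\times D^{n-1}$ and fibre integral $1$; the integral version is the pulled-back Thom class of $D^{n-1}$ rel $\partial D^{n-1}$. Fix an exhaustion $K_1\subset K_2\subset\cdots$. Because $\gamma$ is proper, for each $j$ there is $t_j$ with $N\cap(W\setminus K_j)\subset [t_j,\infty)\times D^{n-1}$ after enlarging $K_j$ slightly. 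On the set $N\cap (W\setminus K_j)$ the form $\tau$ vanishes near the frontier of $N$. Extending it by zero therefore gives a closed form on $W\setminus K_j$, hence a class $\alpha_j\in H^{n-1}(W\setminus K_j)$. The classes $\alpha_j$ are visibly compatible under restriction, so they define $\alpha_\gamma\in H^{n-1}_\infty(W)=\varinjlim_j H^{n-1}(W\setminus K_j)$. Different tubes and fibre forms are homotopic rel infinity, so $\alpha_\gamma$ depends only on $\gamma$.

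\emph{Non-triviality.} We use the long exact sequence obtained in the limit from the pairs $(W,W\setminus K_j)$, together with excision $H^\ast(W,W\setminus K_j)\to H^\ast_c(W)$ in the limit:
\[
\cdots\to H^{n-1}(W)\to H^{n-1}_\infty(W)\xrightarrow{\ \delta\ } H^n_c(W)\to H^n(W)\to\cdots .
\]
Since $W$ is connected, oriented and non-compact, $H^n_c(W)\cong\mathbb{Z}$ via integration. It therefore suffices to show $\delta\alpha_\gamma=\pm1$. Concretely, $\delta\alpha_\gamma$ is represented by $d(\chi\tau)=d\chi\wedge\tau$, where $\chi$ is a cutoff equal to $0$ on $K_j$ and to $1$ far out. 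By Fubini along $N\cong[0,\infty)\times D^{n-1}$, using the fibre integral $1$ of $\tau$, we get
\[
\int_W d\chi\wedge\tau=\int_0^\infty \chi'(t)\,dt=1 .
\]
Hence $\delta\alpha_\gamma$ is a generator, and $\alpha_\gamma\neq0$. In fact $\alpha_\gamma$ is non-torsion: this follows from the integral version, since the connecting map sends the Thom class to the fundamental cocycle.

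\emph{Expected obstacle.} The main technical point is to identify the direct limit of the relative sequences with the sequence above, and to check that the de~Rham computation reflects the integral statement. I would handle this directly with singular cochains. The cochain extended by zero from the Thom class of $(N, N\setminus\gamma)$ defines the classes $\alpha_j$ integrally. Its coboundary after cutting off at $K_j$ is the cap-dual of the compact $0$--chain $\partial(\gamma|_{[t_j,\infty)})=-\gamma(t_j)$. A point is Poincaré dual to the generator of $H^n_c(W)$, which gives $\delta\alpha_\gamma=\pm1$ without differential forms.
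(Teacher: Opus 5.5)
Your argument is correct, and its core computation is the same as the paper's. Your $d\chi$ plays exactly the role of the paper's $\eta_j$, the compactly supported Poincar\'e dual of a separating cross-section $\partial K_j$ of the end. Both proofs reduce to the signed intersection number $\pm1$ of the ray with that cross-section.

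The packaging differs, and yours is the sounder one. The paper fixes a closed form $\omega\in\Omega^{n-1}(W)$ on all of $W$ and shows $[\omega|_{W\setminus K_{j-1}}]\neq 0$ stage by stage, via the duality pairing with $\eta_j$. You build the fibre form only on $W\setminus K_j$, once the initial cap $\{0\}\times D^{n-1}$ lies inside $K_j$. You then push the limit class through $\delta\colon H^{n-1}_\infty(W)\to H^n_c(W)\cong\mathbb{Z}$. This gives three things: non-vanishing directly in the direct limit, non-torsion integrally, and, by exactness, $\alpha_\gamma\notin\operatorname{im}\bigl(H^{n-1}(W)\to H^{n-1}_\infty(W)\bigr)$.

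That last point shows the paper's global $\omega$ cannot exist as stated. For any closed $\omega$ on $W$, $\chi-1$ has compact support, so $\int_W d\chi\wedge\omega=\int_W d\bigl((\chi-1)\omega\bigr)=0$, and the pairing with $\eta_j$ would vanish. The Thom form of a ray is not closed across its endpoint cap; equivalently, $\gamma$ is a locally finite cycle in $W\setminus K_j$ but not in $W$. Restricting to $W\setminus K_j$, as you do, is exactly the repair needed.

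What the paper's phrasing does offer is that it displays $\alpha_\gamma$ as detected by evaluation on cross-sections of the end. That is how the class is used in Section~\ref{sec:psc_cycle}.

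One minor point: for $n=2$, a small perturbation only yields an immersed ray, not an embedded one. Your Fubini computation still goes through for the fibre form pushed forward along an immersed tube, or via your locally finite chain version.
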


\begin{proof}
Fix an exhaustion of $W$ by compact sets
\[
K_1 \subset K_2 \subset \cdots \subset W,
\qquad
\bigcup_{j} K_j = W,
\]
with smooth boundaries.
We now describe the associated cohomology class at infinity.
Fix once and for all a differential form
\[
\omega \in \Omega^{n-1}(W)
\]
constructed from the Thom class of the normal bundle of the ray $\gamma$ (cf. \cite{bott2013differential}).
This closed form represents a candidate class in $H^{n-1}_\infty(W)$.

For each $j$, its restriction
\(
\omega|_{W\setminus K_{j-1}}
\)
defines a cohomology class in $H^{n-1}(W\setminus K_{j-1})$. Consider the boundary
\(
\partial K_j \subset W\setminus K_{j-1}.
\)
Let
\(
\eta_j \in \Omega^1_c(W\setminus K_{j-1})
\)
be a closed differential form representing the Poincar\'e dual of
$[\partial K_j]$ in $W\setminus K_{j-1}$.
By construction, $\eta_j$ has compact support contained in a small collar
neighborhood of $\partial K_j$.

For all sufficiently large $j$, the locally finite cycle $\gamma$ intersects
$\partial K_j$ transversely in an odd number of points.
As a result, the evaluation of the Poincar\'e dual class represented by
$\eta_j$ on $\gamma$ is non-zero.

Equivalently, the pairing

\begin{equation}
\int_{W\setminus K_{j-1}} \omega \wedge \eta_j
\end{equation}
is non-zero.
Since $\eta_j$ is compactly supported, this pairing is well-defined.

It follows that the restriction of $\omega$ to $W\setminus K_{j-1}$
represents a non-trivial cohomology class.
Moreover, these classes are compatible under restriction as $j$ increases,
and hence determine a non-zero element
\[
\alpha_\gamma \in H^{n-1}_\infty(W)
= \varinjlim_j H^{n-1}(W\setminus K_j).
\]
\end{proof}

\begin{rem}
By the proposition above, any proper ray $\gamma \subset W$ determines
a distinguished cohomology class

\begin{equation}
\alpha_\gamma \in H^{n-1}_\infty(W).
\end{equation}
We refer to $\alpha_\gamma$ as the \emph{Poincar\'e dual at infinity}.
\end{rem}

A fundamental result due to Gromov--Lawson asserts that uniform positivity of
scalar curvature imposes strong topological restrictions at infinity.
\begin{dfn}\label{def:sci}
A manifold $M$ is said to be \emph{simply connected at infinity} if for every
compact subset $B \subset M$, there exists a compact subset
$K \subset M$ with $B \subset K$ such that the homomorphism
\[
\pi_1(M \setminus K) \longrightarrow \pi_1(M \setminus B),
\]
induced by the inclusion, is trivial.
\end{dfn}
\begin{thm}[{\cite[Corollary~10.9]{gromov1983positive}}]
\label{thm:gl_sci}
Let $M$ be a complete $3$-manifold of uniformly positive scalar curvature.
If the fundamental group $\pi_1(M)$ is finitely generated, then $M$ is simply
connected at infinity.
\end{thm}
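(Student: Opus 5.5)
The plan is to use the $\mu$--bubble exhaustion of Theorem~\ref{thm:exhaustion} in the three-dimensional case, where the boundaries become unions of $2$--spheres, and then to use finite generation of $\pi_1(M)$ through Grushko's theorem. Let $R_M\ge\sigma>0$. Theorem~\ref{thm:exhaustion} gives an exhaustion $\Omega_1\subset\Omega_2\subset\cdots$ of $M$ by compact domains with smooth boundaries $\Sigma_i=\partial\Omega_i$. In the $n=3$ case of the proof of Proposition~\ref{prop:existence_band}, stability with $\varphi\equiv1$ forces every component of $\Sigma_i$ to have positive Euler characteristic. Since $M$ is oriented and $\Sigma_i$ is two-sided, every component of every $\Sigma_i$ is a $2$--sphere.

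The goal is to show that for all sufficiently large $i$, every connected component $P$ of $M\setminus\operatorname{int}\Omega_i$ is simply connected. Granting this, simple connectivity at infinity follows directly. Given a compact $B$, choose $i$ large with $B\subset\Omega_i$ and this property, and set $K:=\Omega_i$. Any loop in $M\setminus K$ lies in some component $P$ and is null-homotopic in $P\subset M\setminus B$. After thickening $K$ slightly so that $M\setminus K$ deformation retracts onto $M\setminus\operatorname{int}\Omega_i$, the map $\pi_1(M\setminus K)\to\pi_1(M\setminus B)$ is trivial.

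To prove the claim, I would apply van Kampen along the separating spheres $\Sigma_i$, which cut $M$ into $\Omega_i$ and the components $P_1,\dots,P_{k_i}$ of the complement. Each such sphere is simply connected. If each $P_j$ meets $\Omega_i$ in a single sphere, this gives $\pi_1(M)\cong\pi_1(\Omega_i)*\pi_1(P_1)*\cdots*\pi_1(P_{k_i})$. If some $P_j$ meets $\Omega_i$ in several spheres, or $\Omega_i$ is disconnected, additional free $\mathbb{Z}$ factors appear. These can be treated by the same rank count, or removed at the start by a small surgery on the exhaustion. Capping the spheres by balls does not change $\pi_1$, so I may regard each $\pi_1(P_j)$ as the fundamental group of a $3$--manifold whose ends are ends of $M$.

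By Grushko's theorem, the number of nontrivial free factors is at most $\operatorname{rank}\pi_1(M)<\infty$. Now suppose, for contradiction, that for infinitely many $i$ some component $P^{(i)}$ of $M\setminus\operatorname{int}\Omega_i$ has $\pi_1(P^{(i)})\neq1$. Each such $P^{(i)}$ contains a loop $c_i$ that is essential in $P^{(i)}$, hence essential in $M$ by the free product decomposition. Since compact sets are eventually swallowed by the exhaustion, I can pass to a subsequence of indices $i_1<i_2<\cdots$ such that $c_{i_1},\dots,c_{i_{m-1}}$ all lie in $\operatorname{int}\Omega_{i_m}$, while $c_{i_m}$ lies outside $\Omega_{i_m}$. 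For the decomposition at stage $i_m$, the loops $c_{i_1},\dots,c_{i_m}$ are then nontrivial and lie in distinct free factors (or conjugates of factors), which should force at least $m$ nontrivial factors. This contradicts the Grushko bound once $m>\operatorname{rank}\pi_1(M)$.

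The main obstacle is making this last step rigorous. The decompositions at different stages are refinements of one another, namely $\Omega_{i_{m}}$ is itself cut by the spheres $\Sigma_{i_{m-1}}$, and I need that essential loops separated by these spheres genuinely contribute independent free factors, so that rank strictly accumulates. I would handle this by writing the decomposition of $\pi_1(M)$ along all spheres $\Sigma_{i_1},\dots,\Sigma_{i_m}$ at once, using the fact that they are disjoint and nested. This is a graph-of-groups decomposition with trivial edge groups, so $\pi_1(M)$ is a free product of the vertex groups and a free group. The $m$ loops then lie in $m$ distinct vertex groups, each of which is nontrivial, so Grushko gives $\operatorname{rank}\pi_1(M)\ge m$.
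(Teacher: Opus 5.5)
Your proof is correct, but it does not follow the route the paper points to. The paper does not prove this statement. It cites Gromov--Lawson and only remarks that one takes a suitable exhaustion and deforms the metric near its boundary to make it mean convex. This is the Plateau-problem mechanism reused in Step~2 of Theorem~\ref{thm:link_infinity}: area-minimizing fillings then exist inside the domain, and one exploits their stability. In Gromov--Lawson's setting, stability under $R\ge\sigma$ is what controls such fillings.

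You instead take the $\mu$--bubble exhaustion of Theorem~\ref{thm:exhaustion}. You get spherical boundary components from the $n=3$ case of Proposition~\ref{prop:existence_band}, via Gauss--Bonnet with $\varphi\equiv 1$. This also relies on the paper's standing orientability convention: without it, two-sided $\mathbb{RP}^2$'s could occur and give nontrivial edge groups. You then argue purely group-theoretically. Your graph-of-groups step is sound. After pushing $c_{i_k}$ off $\Sigma_{i_k}$, it lies in $\operatorname{int}\Omega_{i_{k+1}}\setminus\Omega_{i_k}$, hence in one component of that layer. That component is contained in $P^{(i_k)}$, so its (injective) vertex group is nontrivial, and Grushko bounds the number of such vertices.

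What your route buys is a stronger structural conclusion: eventually every complementary piece of the exhaustion is simply connected. It uses only tools already set up in Section~\ref{sec:mu-bubble-exhaustions} and avoids both the mean-convex deformation and the analysis of minimal fillings. The Gromov--Lawson route needs no $\mu$--bubbles and gives geometric control on fillings, which is what the paper transplants to dimension four.

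You can also bypass the subsequence and Grushko entirely. Represent a finite generating set of $\pi_1(M,p)$ by loops in a compact connected set $L$, and take $i$ so large that $L\subset\operatorname{int}\Omega_i^0$ for one component $\Omega_i^0$ of $\Omega_i$. In the free-product decomposition along $\Sigma_i$, the vertex group $\pi_1(\Omega_i^0,p)$ is then all of $\pi_1(M,p)$. This forces every other vertex group, in particular every $\pi_1(P)$, and the free part to be trivial, for every such $i$.
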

The argument in Theorem~\ref{thm:link_infinity} is similar in spirit to that of Theorem~\ref{thm:gl_sci}.
One considers a suitable exhaustion and performs a mild deformation near the boundary so as to arrange mean convexity.

\begin{rem}\label{rem:fg_assumption}
The finite generation assumption on the fundamental group is essential.
Without it, the conclusion of Theorem~\ref{thm:gl_sci} fails. Consider the $3$-manifold $M$ obtained as an infinite connected sum of copies of $S^2 \times S^1$ constructed from the graph; the half-line
$[0,+\infty)$ with vertices at the integer points.
Then $M$ is not simply connected at infinity, since the complement of
any compact subset contains infinitely many non-contractible loops.
\end{rem}

In the following sections, we explain how positive scalar curvature interacts
with the topology at infinity, sometimes forcing, and sometimes obstructing,
the existence of hypersurfaces that escape to infinity.

\section{Topology at Infinity and Scalar Curvature Decay}\label{sec:sc_decay}
Here, we briefly recall the geometric meaning of the assumptions appearing
in Theorem~\ref{thm:link_infinity}.
We consider a complete oriented Riemannian $4$--manifold $(X,g)$ of
weakly bounded geometry (see Definition~\ref{def:weakly-bounded-geometry}) together with a proper smooth map
$\pi \colon X \to \mathbb{R}^3$.
Let $F=\pi^{-1}(y)$ be the inverse image of a regular value $y$.
Assume that $H_3(X)=0$ and that there exists
\[
0\neq \alpha \in \ker\bigl(H_2^\infty(X)\to H_2(X)\bigr)
\quad\text{with}\quad
\operatorname{Lk}(F,\alpha)\neq 0 .
\]

The linking condition involves a locally finite homology class
$\alpha \in H_2^\infty(X)$ and a regular fiber
$F=\pi^{-1}(y)$ of the proper map $\pi$.
The requirement $\operatorname{Lk}(F,\alpha)\neq 0$ expresses that
the compact fiber $F$ is forced to interact nontrivially with
the homology at infinity of $X$.

More precisely, recall that the homology at infinity
$H_2^\infty(X)$ is defined as the inverse limit
\[
H_2^\infty(X)
\;=\;
\varprojlim_K H_2(X \setminus K),
\]
where $K$ ranges over compact subsets of $X$.
Thus a class $\alpha \in H_2^\infty(X)$ is represented by a compatible
family of homology classes
\[
\alpha_K \in H_2(X \setminus K)
\]
for all sufficiently large compact sets $K$.

The assumption
\[
\alpha \in \ker\bigl(H_2^\infty(X)\to H_2(X)\bigr)
\]
means that, for every sufficiently large $K$, the class $\alpha_K$
bounds in $X$, although it remains nontrivial in $H_2(X \setminus K)$.
Since $H_3(X)=0$, we may choose, for each such $K$, a $3$--chain
$W_K \subset X$ satisfying
\[
\partial W_K = \alpha_K .
\]

Let $F=\pi^{-1}(y)$ be a regular fiber.
For $K$ large enough so that $F \subset K$, the usual intersection
pairing defines a linking number
\[
\operatorname{Lk}_K(F,\alpha)
\;:=\;
\langle F, [W_K] \rangle ,
\]
which is well-defined because $F$ is disjoint from $\alpha_K$.
As $K$ increases, these linking numbers are compatible under restriction
and hence stabilize.
We define the linking number at infinity by the stabilized value
$\operatorname{Lk}(F,\alpha)
\;:=\;
\lim_{K \to \infty} \operatorname{Lk}_K(F,\alpha).$
Consequently, $\operatorname{Lk}(F,\alpha)$ is well-defined and depends
only on the class $\alpha \in H_2^\infty(X)$.

Geometrically, the condition $\operatorname{Lk}(F,\alpha)\neq 0$
means that the fiber $F$ nontrivially links the $2$--dimensional
topology at infinity.
In particular, $F$ cannot be separated from the end of $X$
represented by $\alpha$ by any compact deformation.
This linking at infinity provides the topological input that leads
to the scalar curvature estimate in
Theorem~\ref{thm:link_infinity}.
\begin{thm}
\label{thm:link_infinity}
Let $(X^4,g)$ be a complete oriented Riemannian manifold with weakly bounded geometry
and non-negative scalar curvature.
Let $\pi \colon X \to \mathbb{R}^3$ be a proper smooth map,
and let $F = \pi^{-1}(y)$ be the inverse image of a regular value.
Assume that $H_3(X)=0$ and that there exists
\begin{equation}
0 \neq \alpha \in \ker\bigl(H_2^\infty(X) \to H_2(X)\bigr)
\quad \text{with} \quad
\operatorname{Lk}(F,\alpha) \neq 0 .
\label{eq:alpha-linking-assumption}
\end{equation}
For $\rho>0$, set
$
\pi_\rho := \pi\big|_{\pi^{-1}(B^3(\rho))} \colon \pi^{-1}(B^3(\rho)) \to B^3(\rho).$
Then there exists a constant $C > 0$, depending only on the dimension,
such that for all $\rho > 0$,
\begin{equation}
\min_{\pi^{-1}(B^3(\rho))} R_g
\;\le\;
C
\left(\frac{\operatorname{Dil}_1(\pi_\rho)}{\rho}\right)^2 .
\end{equation}
\end{thm}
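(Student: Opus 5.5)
Argue by contradiction. Fix $\rho>0$, set $L:=\operatorname{Dil}_1(\pi_\rho)$, and suppose
\[
R_g \ge \sigma := C L^2/\rho^2 \quad\text{on } U_\rho:=\pi^{-1}(B^3(\rho)),
\]
with $C$ large, to be fixed. Since $|\pi(x)-y|$ is $L$--Lipschitz, the distance in $X$ from a regular fiber $F=\pi^{-1}(y)$ (with $|y|$ small) to $\partial U_\rho$ is at least $(\rho-|y|)/L\gtrsim \rho/L$. The region $U_\rho$ is therefore a \emph{wide band} around $F$, of width $D\sim\rho/L$. The choice of $C$ is exactly what makes $D$ exceed the critical width $D(\sigma)=2\pi/\sqrt{\sigma}$ of Proposition~\ref{prop:existence_band} by a definite factor.

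The topological step produces a hypersurface inside $U_\rho$ that must meet $F$. By the linking hypothesis, for $K$ large the class $\alpha_K$ bounds a $3$--chain $W_K$ with $\langle F,[W_K]\rangle\neq 0$. I would minimize area (or a $\mu$--bubble functional $\mathcal A_h$, with $h$ built from $\tan$ of the rescaled function $|\pi-y|$ as in the proof of Proposition~\ref{prop:existence_band}) in the class of $3$--chains relative to $\partial U_\rho$ that are homologous to $W_K\cap U_\rho$. The intersection number with $F$ is a homological invariant in $U_\rho$, and $H_3(X)=0$ rules out closed components that would spoil it. Hence the minimizer $\Sigma^3$ is a stable $\mu$--bubble in $U_\rho$ with $\Sigma\cdot F\neq 0$. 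Since $\dim X=4\le 7$, $\Sigma$ is smooth by Lemma~\ref{lem:existence_mu_bubbles}. As an oriented hypersurface in an oriented $4$--manifold it is orientable and $3$--dimensional, hence spin.

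The analytic step works on the component of $\Sigma$ near $F$. Exactly as in \eqref{eq:psc-stability}, stability yields
\[
\int_\Sigma |\nabla\varphi|^2+\tfrac12 R_\Sigma\varphi^2\ge \tfrac{\delta}{2}\int\varphi^2,
\qquad \delta=\sigma-4\pi^2/D^2>0,
\]
so $\Sigma$ is spectrally PSC. The contradiction should come from index theory on $\Sigma$. The fibers of $\pi|_\Sigma\colon\Sigma\to B^3(\rho)$ carry the linking number, so $\pi|_\Sigma$ has nonzero degree onto a ball around $y$. Pulling back a Gromov--Lawson type bundle of curvature $\lesssim (L/\rho)^2$ from $B^3(\rho)$, or using the spinor area-contracting argument (Llarull/Gromov-type relative index), gives a nonvanishing twisted index. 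By the Weitzenböck formula with the spectral positivity, this forces $\delta\lesssim L^2/\rho^2$. That contradicts the lower bound on $\sigma$ once $C$ is large.

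Weakly bounded geometry enters through Lemma~\ref{lem:stable-curvature-estimate}. It gives curvature bounds for $\Sigma$, which are needed to control the boundary terms of $\Sigma$ near $\partial U_\rho$ when the index argument is localized via cutoffs. The main obstacle is this index-theoretic step: showing that the spectral (not pointwise) positivity of $R_\Sigma$ is enough for the twisted Dirac argument. The plan is the standard conformal change $u^{4}g_\Sigma$ (with $m=3$) by the first eigenfunction, or a Bär--Hanke type spectral Dirac estimate, while keeping track of how the dilation of $\pi|_\Sigma$ changes under this conformal factor.
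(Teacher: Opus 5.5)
Your $\mu$--bubble mechanism for spectral positivity cannot work, for a topological reason. A $\mu$--bubble is the reduced boundary of a Caccioppoli set, and the closed curve $F$ crosses the boundary of a region algebraically zero times. With $h$ a function of $|\pi-y|$, the $\mu$--bubble is in fact a closed hypersurface separating a neighbourhood of $F$ from $\partial U_\rho$. Your own remark already shows the problem: since $H_3(X)=0$, every closed $3$--cycle has zero intersection with $F$, so any $\Sigma$ with $\Sigma\cdot F\neq0$ must reach $\partial U_\rho$. Hence the inequality with $\delta=\sigma-4\pi^2/D^2$ ``exactly as in \eqref{eq:psc-stability}'' is not available.

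What survives is the area--minimizing version. Take $K\supset\overline U_\rho$ and minimize mass in the class of $W_K\cap U_\rho$ in $H_3(U_\rho,\partial U_\rho)$. This gives a $\Sigma$ that is smooth in $\mathring U_\rho$ with $\Sigma\cdot F\neq0$, and your degree observation for $\pi|_\Sigma$ is fine. But the only estimate you get is
\[
\int_\Sigma \bigl(2|\nabla\varphi|^2+R_\Sigma\varphi^2\bigr)\;\ge\;\int_\Sigma (R_X+|A|^2)\varphi^2,
\qquad \varphi\in C_c^\infty(\Sigma),
\]
on a hypersurface running into $\partial U_\rho$, where nothing is controlled. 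No global spectral positivity and no band--width term enter at this stage.

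The index step then has two concrete holes, and they are exactly what the paper's key construction fills.

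\textbf{Dimension.} $\Sigma$ is $3$--dimensional. An odd-dimensional twisted Dirac operator carries no $\mathbb Z$--index, and in any case $\tilde K^0(S^3)=0$. So pulling back a bundle from $B^3(\rho)$, collapsed to $S^3$, produces nothing; you must go up one dimension.

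\textbf{Conformal change.} The metric $u^4g_\Sigma$ uses an eigenfunction controlled only in the interior; a Dirichlet eigenfunction vanishes on the boundary of its domain. This destroys completeness and the neck of length $\gtrsim\rho/L$ on which the estimate rests. Tracking the dilation of $\pi|_\Sigma$ does not repair this.

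The paper handles both at once with the Fischer-Colbrie--Schoen warped product. A positive solution of $-2\Delta u+R_M u=(R_X+|A|^2)u$ gives $\widehat M=M\times S^1$ with metric $g_M+u^2dt^2$. Its scalar curvature is pointwise $R_X+|A|^2$, it is $4$--dimensional and spin, and $(\varphi_\rho\circ\pi)\times\mathrm{id}$ followed by $S^3\times S^1\to S^4$ maps it to $S^4$ with contraction $O(\operatorname{Dil}_1(\pi_\rho)/\rho)$.

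To apply the Gromov--Lawson relative index theorem, the paper also works on a \emph{complete} hypersurface. It solves Plateau problems with boundaries $\Sigma_j$ escaping to infinity, and uses weakly bounded geometry (Lemma~\ref{lem:stable-curvature-estimate}) only to extract a complete stable limit $M_\infty$ meeting $F$.

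If you keep your localized $\Sigma$, you need two things. First, the warped product over a compact piece $\Sigma\cap\pi^{-1}(\overline B(y,3\rho/4))$, with $u$ positive up to its boundary (an eigenfunction of a slightly larger piece); this does not shorten horizontal distances. Second, an index theorem for compact manifolds with a long neck, of Cecchini's long--neck type, in place of a cutoff argument. Cutoffs $\chi(|\pi-y|/\rho)$ already have gradient $\lesssim L/\rho$. So Lemma~\ref{lem:stable-curvature-estimate} is not what controls the boundary, and invoking it would make your constant depend on $Q$ rather than only on the dimension.
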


\begin{proof}[Proof of Theorem~\ref{thm:link_infinity}]
\noindent\textbf{Step 1.}
Since
\begin{equation*}
0 \neq \alpha \in \ker\bigl(H_2^\infty(X)\to H_2(X)\bigr),
\end{equation*}
by definition there exist nested bounded open sets
\begin{equation*}
\Omega_1 \subset \Omega_2 \subset \cdots \subset X,
\qquad \bigcup_{j=1}^\infty \Omega_j = X,
\end{equation*}
and nontrivial classes
\begin{equation}\label{eq:alpha-j}
\alpha_j \in H_2(X\setminus \Omega_j)
\end{equation}
such that for all $i\le j$ the inclusion
\(
\iota : X\setminus \Omega_j \hookrightarrow X\setminus \Omega_i
\)
satisfies $\iota_*(\alpha_j)=\alpha_i\neq 0$, while the image of $\alpha_j$
in $H_2(X)$ vanishes.

Consequently, for each $j$ there exists a smooth embedded $2$--cycle
\begin{equation*}
\Sigma_j \subset X\setminus \Omega_j
\end{equation*}
representing $\alpha_j$ and null-homologous in $X$.

\medskip
\noindent\textbf{Step 2.}
Let $F:=\pi^{-1}(pt)$ be a regular fiber.
Since $\operatorname{Lk}(F,\alpha)\neq 0$, for all sufficiently large $j$ we have
\begin{equation}\label{eq:linking-Sigma-j}
\operatorname{Lk}(F,\Sigma_j)\neq 0.
\end{equation}
Hence any $3$--chain $\widetilde M$ with $\partial\widetilde M=\Sigma_j$
must intersect $F$.

We modify the metric near $\partial\Omega_j$ to obtain a metric $g_j$
agreeing with $g$ away from $\partial\Omega_j$ and such that
$\partial\Omega_j$ is mean convex (see similar arguments in {\cite[Corollary~10.9]{gromov1983positive}}, {\cite[Theorem~1.13]{chodosh2024complete}}).
Consider the Plateau problem
\begin{equation}\label{eq:Plateau-problem}
\inf\bigl\{\mathcal{H}^3_{g_j}(\widetilde M)\;:\;
\partial\widetilde M=\Sigma_j\bigr\}.
\end{equation}
By standard existence and regularity theory,
there exists a smooth compact two-sided area-minimizing hypersurface
$M_j$ with $\partial M_j=\Sigma_j$.
By the linking property, $M_j\cap F\neq\emptyset$.

\medskip
\noindent\textbf{Step 3.}
Each $M_j$ is stable.
Lemma~\ref{lem:stable-curvature-estimate} yields uniform curvature bounds
on compact subsets of $X$.
After passing to a subsequence,
\begin{equation}\label{eq:Mj-convergence}
M_j \to M_\infty
\end{equation}
where
\begin{equation}\label{eq:Minfty-immersion}
M_\infty^3 \looparrowright (X^4,g)
\end{equation}
is a complete, two-sided, stable minimal immersion.
Moreover, $M_\infty\cap F\neq\emptyset$, so $M_\infty$ is nonempty.

\medskip
\noindent\textbf{Step 4.}
Stability of $M_\infty$ implies the inequality
\begin{equation}\label{eq:stability-inequality}
\int_{M_\infty} \bigl(2|\nabla f|^2 + R_{M_\infty} f^2\bigr)
\;\ge\;
\int_{M_\infty} \bigl(R_X + |A|^2\bigr) f^2
\end{equation}
for all $f\in C_c^\infty(M_\infty)$.
By the Fischer--Colbrie--Schoen argument (\cite{fischer1980structure}), there exists a positive function
$u>0$ on $M_\infty$ satisfying
\begin{equation}\label{eq:FCS-equation}
-2\Delta u + R_{M_\infty} u = (R_X + |A|^2)u.
\end{equation}

Consider the warped product
\begin{equation*}
\widehat M := M_\infty \times S^1
\end{equation*}
equipped with the metric
\begin{equation*}
\tilde g := g_{M_\infty} + u^2\, dt^2.
\end{equation*}
A direct computation yields
\begin{equation}\label{eq:scalar-curvature-warped}
R_{\tilde g}
=
R_{M_\infty} - \frac{2\Delta u}{u}
=
R_X + |A|^2.
\end{equation}

Fix a degree--one map
\begin{equation}\label{eq:phi-map}
\varphi \colon \mathbb{R}^3 \longrightarrow S^3
\end{equation}
which is constant outside the unit ball and has uniformly bounded
Lipschitz constant.
For each $\rho>0$, consider the composition
\begin{equation}\label{eq:composition-map}
\widehat M
\;\xrightarrow{\;\pi|_{M_\infty}\times \mathrm{id}\;}
\mathbb{R}^3 \times S^1
\;\xrightarrow{\;\varphi_\rho \times \mathrm{id}\;}
S^3 \times S^1,
\end{equation}
where $\varphi_\rho(x) := \varphi(x/\rho)$.
This map is $O\!\left(\operatorname{Dil}_1(\pi)/\rho\right)$--contracting.

We note that $M_\infty$ admits a spin structure since it is a $3$-manifold, which implies that $\widehat M$ is also spin.
Passing to a sufficiently high finite covering of $\widehat M$,
we obtain a non-zero degree map
\begin{equation}\label{eq:degree-one-map}
F \colon \widehat M \longrightarrow S^4
\end{equation}
which is contracting by a factor comparable to
$\operatorname{Dil}_1(\pi_\rho)/\rho$ and is constant outside a compact set.
Applying the same argument based on the relative index theorem
\cite[Theorem~11.4]{gromov1983positive},
the existence of such a contracting non-zero degree map implies the estimate
\begin{equation}\label{eq:final-estimate}
\min_{\pi^{-1}(B^3(\rho))} R_g
\;\le\;
\mathrm{const}_3
\left(\frac{\operatorname{Dil}_1(\pi_\rho)}{\rho}\right)^2 .
\end{equation}
For completeness, we include a proof.
Without loss of generality, we may assume the following setting, since the relevant geometric and analytic estimates are stable under taking the product with a sufficiently large two–sphere, and no essential information is lost as the radius tends to infinity.

Suppose that there exists a non-compact Riemannian spin manifold $N^4$
equipped with a complete Riemannian metric $g_N$ such that
\begin{equation}
    R(g_N) \ge \kappa > 0
\quad \text{on } N \setminus K,
\end{equation}

where $K \subset N$ is a sufficiently large compact set.
Assume moreover that there is a map $G : N \to S^4$
with the same contraction factor as $F$, and that a subset
$\Omega_\rho \subset K$ is identified by an isometry with
\begin{equation}
    \Omega_\rho \cong \pi^{-1}(B^3(\rho)) \times S^1 .
\end{equation}
Under this identification, the maps
\begin{equation}
    F : (\widehat M, \pi^{-1}(B^3(\rho)) \times S^1) \to (S^4, \mathrm{pt}),
\qquad
G : (N, \Omega_\rho) \to (S^4, \mathrm{pt})
\end{equation}
are compatible.
Choose a complex vector bundle $E_0$ over $S^{4}$
such that
$
\widetilde{\mathrm{ch}}(E_0)\neq 0 .
$
Fix a unitary connection $\nabla^0$ on $E_0$, and denote its curvature by $R^0$.
Let $G : N \to S^{4}$ be the map constructed above.
Define
\begin{equation*}
    E := G^*E_0, \qquad \nabla := G^*\nabla^0 .
\end{equation*}

Since $G$ is constant outside the compact set $\Omega_\rho\subset N$,
the bundle $E$ is flat on $N \setminus \Omega_\rho$, and in particular
\begin{equation}\label{eq:G_flat_outside}
\mathcal{R}^E = 0
\quad \text{on } N \setminus \Omega_\rho.
\end{equation}

Moreover, since $G$ has the same contraction factor as $F$,
the curvature of $E$ satisfies the pointwise estimate
\begin{equation}\label{eq:G_curvature_bound}
\|\mathcal{R}^E\| \le C\,\|\mathcal{R}^{E_0}\|(\frac{\operatorname{Dil}_1(\pi_\rho)}{\rho})^2
\quad \text{on } \Omega_\rho,
\end{equation}
where we set $C$ is a constant depending only on demension.
Let $\mathbb{S}$ be the spinor bundle over $N$, and let
\[
\slashed D_E : \Gamma(\mathbb{S}\otimes E) \to \Gamma(\mathbb{S}\otimes E)
\]
be the Dirac operator twisted by $E$.

We consider two elliptic operators on $N$:
the untwisted Dirac operator $\slashed D$ acting on the spinor bundle $\mathbb{S}$,
and the Dirac operator $\slashed D_E$ acting on $\mathbb{S}\otimes E$.

By \eqref{eq:G_flat_outside}, \eqref{eq:G_curvature_bound}, and the assumption
that $R(g_N)\ge \kappa>0$ on $N\setminus K$ with $\Omega_\rho\subset K$,
the Weitzenböck curvature term associated to both operators is uniformly
positive outside a compact set.
Hence $\slashed D$ and $\slashed D_E$ are coercive at infinity and therefore Fredholm.

Since $E$ is trivial and the connection is flat on $N\setminus\Omega_\rho$,
the operators $\slashed D$ and $\slashed D_E$ agree outside a compact set.
Thus the relative index theorem applies and yields
\[
\operatorname{ind}(\slashed D_E)-\operatorname{ind}(\slashed D)
=
\int_N (\mathrm{ch}(E)-\mathrm{rk}(E))\,\hat A(N)
=
\deg(G)\int_{S^{4}}\widetilde{\mathrm{ch}}(E_0),
\]
which is non-zero by construction.

On the other hand, the Weitzenböck formula for the untwisted operator $\slashed D$
shows $\slashed D$ admits no non-trivial $L^2$ harmonic spinors and
$
\operatorname{ind}(\slashed D)=0.
$

It follows that $\operatorname{ind}(\slashed D_E)\neq 0$, and hence $\slashed D_E$ has a
non-trivial kernel.
In particular, the Weitzenböck curvature term for $\slashed D_E$ must become negative at some point of $\Omega_\rho$.
\end{proof}
\begin{rem}\label{rem:spin-bernstein-only}
We emphasize that the existence of a spin structure is an essential input
in the above argument, as it is required for the application of the
Dirac operator and the relative index theorem.
In the proof presented above, this condition was automatically satisfied
due to the \emph{dimension--specific fact that any orientable 3--manifold is spin}.

Apart from this spin assumption, however, the proof only uses
\begin{itemize}
  \item curvature estimates for complete stable minimal hypersurfaces, and
  \item the classification result on the Bernstein problem asserting that
  complete stable minimal hypersurfaces in Euclidean space must be flat.
\end{itemize}
In particular, no further global classification results are invoked.
As a consequence, the same method applies in spin manifold settings
where these analytic ingredients and the above classification result
are available, yielding analogous scalar curvature decay estimates
under suitable linking assumptions at infinity.
Indeed, if the ambient manifold $X$ is spin, then any two--sided
hypersurface inherits a canonical spin structure, since its normal
bundle is trivial.
\end{rem}

The scalar curvature estimate established in
Theorem~\ref{thm:link_infinity}
is driven by two large--scale features of a noncompact manifold:
the existence of nontrivial topology at infinity,
and the presence of a proper map to Euclidean space
whose distortion can be quantitatively controlled.
In particular, the existence of a proper map with controlled dilation
allows one to convert nontrivial linking at infinity
into quantitative curvature bounds.
This motivates the following definitions.

\begin{dfn}[Linking degree at infinity]
Let $X^n$ be an oriented manifold with $H_{n-1}(X)=0$,
and let
\(
\pi \colon X \to \mathbb R^{n-1}
\)
be a proper smooth map.

The \emph{linking degree at infinity} of $\pi$ is defined by
\[
\deg_\infty(\pi)
:=
\sup
\Bigl\{
\operatorname{Lk}(\pi^{-1}(y),\beta)
\;\Big|\;
y \text{ is a regular value of } \pi,\;
\beta \in \mathcal K
\Bigr\},
\]
where
\[
\mathcal K
:=
\ker\bigl(H_{n-2}^\infty(X)\to H_{n-2}(X)\bigr).
\]
If $\mathcal K=\{0\}$, we set $\deg_\infty(\pi):=0$.
\end{dfn}
\begin{dfn}[Weak domination over Euclidean space]\label{def:weak_domination}
Let $(X^n,g)$ be a complete oriented Riemannian manifold
with $H_{n-1}(X)=0$.
We say that $(X,g)$ is ($\alpha$--)\emph{weakly dominated over $\mathbb R^{n-1}$}
if there exists a proper smooth map
\[
\pi \colon X \to \mathbb R^{n-1}
\]
such that:

\begin{enumerate}
\item[(i)] (\emph{Nontrivial domination at infinity})
\[
\deg_\infty(\pi)\neq 0 .
\]

\item[(ii)] (\emph{Polynomial dilation growth})
There exist constants $C>0$ and $0< \alpha\le 1$ such that
for all $\rho>0$,
\[
\mathrm{Dil}_1(\pi_\rho)\le C\,\rho^{1-\alpha} .
\]
\end{enumerate}
\end{dfn}
Combining Theorem~\ref{thm:link_infinity}
with the definition of $\alpha$--weak domination over $\mathbb R^3$,
we obtain a quantitative scalar curvature decay estimate.

\begin{cor}\label{cor:thm:link_infinity}
Let $(X^4,g)$ be a complete oriented Riemannian manifold
with weakly bounded geometry and non-negative scalar curvature.
If $(X,g)$ is $\alpha$--weakly dominated over $\mathbb R^3$ by a map
\(
\pi \colon X \to \mathbb R^3,
\)
then for all $\rho>0$,

\[
\min_{\pi^{-1}(B^3(\rho))} R_g
\;\lesssim\;
\rho^{-2\alpha} .
\]

\end{cor}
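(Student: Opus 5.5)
The corollary should follow by plugging the two conditions in Definition~\ref{def:weak_domination} directly into Theorem~\ref{thm:link_infinity}. The plan is to first extract the topological hypotheses of the theorem from condition (i), and then use condition (ii) to turn the dilation term into a power of $\rho$.

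\textbf{Topological input.} Condition (i) says $\deg_\infty(\pi)\neq 0$. By the definition of the linking degree at infinity, this gives a regular value $y$ and a class $\beta\in\mathcal K=\ker\bigl(H_2^\infty(X)\to H_2(X)\bigr)$ with $\operatorname{Lk}(\pi^{-1}(y),\beta)\neq 0$. This forces $\beta\neq 0$, since the linking number is linear in the class and vanishes on $0$. Also, $H_3(X)=0$ is part of the standing assumption in Definition~\ref{def:weak_domination}, with $n=4$. So $F=\pi^{-1}(y)$ and $\alpha:=\beta$ satisfy \eqref{eq:alpha-linking-assumption}. The geometric hypotheses (completeness, weakly bounded geometry, $R_g\ge 0$) are assumed verbatim in the corollary. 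Hence Theorem~\ref{thm:link_infinity} applies with a dimensional constant $C_0$: for every $\rho>0$,
\[
\min_{\pi^{-1}(B^3(\rho))}R_g\le C_0\left(\frac{\operatorname{Dil}_1(\pi_\rho)}{\rho}\right)^2 .
\]

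\textbf{Dilation bound.} Condition (ii) gives $\operatorname{Dil}_1(\pi_\rho)\le C\rho^{1-\alpha}$. Therefore $\operatorname{Dil}_1(\pi_\rho)/\rho\le C\rho^{-\alpha}$, and squaring gives
\[
\min_{\pi^{-1}(B^3(\rho))}R_g\le C_0C^2\rho^{-2\alpha}.
\]
This is the claim, with implied constant $C_0C^2$.

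\textbf{Expected obstacle.} The only point needing care is notational. The exponent $\alpha$ in the weak-domination definition clashes with the homology class called $\alpha$ in Theorem~\ref{thm:link_infinity}, so I rename the class $\beta$ as above. The supremum in $\deg_\infty$ may also be approached rather than attained, but a nonzero supremum over integer linking numbers still requires at least one nonzero term, and that term supplies the needed $y$ and $\beta$. Everything substantive is contained in Theorem~\ref{thm:link_infinity}.
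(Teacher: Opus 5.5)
Your proposal is correct and follows the paper's own argument, which simply combines Theorem~\ref{thm:link_infinity} with the definition of $\alpha$--weak domination. From $\deg_\infty(\pi)\neq 0$ you extract a regular fiber and a nonzero class in $\ker\bigl(H_2^\infty(X)\to H_2(X)\bigr)$ with nonzero linking, apply the theorem, and insert $\operatorname{Dil}_1(\pi_\rho)\le C\rho^{1-\alpha}$; your extra care about the supremum and the clashing symbol $\alpha$ fills in details the paper leaves implicit.
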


\begin{ex}[Quadratic scalar curvature decay in a weakly dominated manifold]
\label{ex:gromov_quadratic_decay}

We illustrate the class of weakly dominated manifolds
by a concrete example originating from a construction of Gromov.
In \cite{gromov2018metric, chen2025optimal},
a complete Riemannian metric with positive scalar curvature
and quadratic decay at infinity was constructed on
$\mathbb R^2\times T^2$.

Passing to the infinite cyclic cover in one of the $T^2$--directions,
we obtain a complete oriented $4$--manifold
\[
X \cong \mathbb R^2 \times \mathbb R \times S^1
\]
with weakly bounded geometry and non-negative scalar curvature.
Moreover, $H_3(X)=0$, and $X$ admits a proper smooth map
\[
\pi \colon X \to \mathbb R^3
\]
with nonzero linking degree at infinity.
In particular, $(X,g)$ is weakly dominated over $\mathbb R^3$
in the sense of Definition~\ref{def:weak_domination}.

Applying Corollary~\ref{cor:thm:link_infinity} to the dominating map $\pi$,
we obtain an upper bound on the scalar curvature in terms of the
$1$--dilation of $\pi$.
Using the explicit dilation estimate
\eqref{eq:dilation-growth},
\[
\mathrm{Dil}_1(\pi_\rho)
\sim
\frac{1}{c}\,\rho^{1-\alpha},
\qquad
0<\alpha\le \tfrac14,
\]
the corollary implies that the scalar curvature must decay at least
at a polynomial rate as $\rho\to\infty$.

For the specific metric under consideration,
a direct computation (cf.\ \eqref{eq:scalar-final})
shows that
\[
\min_{\pi^{-1}(B^3(\rho))} R_g \sim \frac{C}{\rho^2},
\]
that is, the decay is quadratic.
Although Corollary~\ref{cor:thm:link_infinity} does not recover
the optimal decay exponent in this example,
it nevertheless detects nontrivial scalar curvature decay
purely from the topology at infinity and the large--scale behavior
of the dominating map.
Indeed,
\[
\min_{\pi^{-1}(B^3(\rho))} R_g
\sim
\frac{1}{\rho^2}
\;\lesssim\;
\left(\frac{\mathrm{Dil}_1(\pi_\rho)}{\rho}\right)^2
\sim
\rho^{-2\alpha}.
\]

The detailed construction of the metric,
the scalar curvature computation,
and the matching argument at the gluing radius
are presented in Appendix~\ref{sec:construction_qdecay}.
\end{ex}

Second, the same argument can be read in the opposite direction to yield
a priori estimates for the minimal hypersurfaces themselves.
When the ambient manifold has non-negative scalar curvature,
the stability inequality places strong restrictions on how much curvature
a stable minimal hypersurface can accumulate far away from a fixed base
point.
The resulting statement is closely analogous to
\cite[Theorem~11.4]{gromov1983positive}.

\begin{cor}
Assume that $X$ is tame and that $\pi$ is distance--decreasing.
Let $g$ be a complete Riemannian metric on $M = X \times S^1$
with weakly bounded geometry and non-negative scalar curvature.
Let $\alpha \in H_2^\infty(X)\cong H_2(\partial X)$ be a non-trivial
homology class at infinity.

Assume that there exists a complete, stable minimal hypersurface
$\Sigma^3 \subset M$ representing a locally finite homology class
$[\Sigma]\in H_3^{\mathrm{lf}}(M)$ whose boundary at infinity satisfies
$\partial_*([\Sigma])=\alpha$.
Then $M$ satisfies the decay estimate
\[
\min_{p\in \pi^{-1}(B_r)\cap M} |A_M|(p)
\;\le\;
\frac{C}{r(p)}
\]
where $r(p)$ denotes the distance
from $p$ to a fixed base point in $M$.
\end{cor}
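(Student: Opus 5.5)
The strategy is to argue by rescaling and contradiction, in the spirit of \cite[Theorem~11.4]{gromov1983positive}. Suppose that along a sequence of radii $r_k\to\infty$ the quantity $r_k\,|A|$ (where $|A|$ denotes the second fundamental form of $\Sigma$) stays large at every point of $\pi^{-1}(B_{r_k})\cap\Sigma$. The goal is to produce a blow-up limit that contradicts either Bernstein-type rigidity or the index-theoretic argument from the proof of Theorem~\ref{thm:link_infinity}. The first step is to record the inputs that are already available:
\begin{itemize}
\item Since $\Sigma$ is complete, two-sided and stable, Lemma~\ref{lem:stable-curvature-estimate} (weakly bounded geometry) gives local curvature control.
\item The condition $\partial_*([\Sigma])=\alpha\neq0$ guarantees that $\Sigma$ cannot be pushed off to infinity. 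Indeed, for a regular fiber $F=\pi^{-1}(y)\times S^1$, or a point-fiber linking $\alpha$, the intersection $\Sigma\cap\pi^{-1}(B_r)$ is nonempty for all large $r$, exactly as in Step~2 of the proof of Theorem~\ref{thm:link_infinity}.
\end{itemize}

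The second step repeats Step~4 of that proof verbatim. Stability yields, via Fischer-Colbrie--Schoen, a positive solution $u$ of
\[
-2\Delta u+R_\Sigma u=(R_M+|A|^2)u .
\]
On $\widehat\Sigma=\Sigma\times S^1$ with metric $g_\Sigma+u^2dt^2$, this gives
\[
R_{\tilde g}=R_M+|A|^2\ge |A|^2 .
\]
In particular, a lower bound $|A|\ge c_0/r$ on $\pi^{-1}(B_r)\cap\Sigma$ becomes a scalar curvature lower bound $R_{\tilde g}\ge c_0^2/r^2$ on the corresponding region of the spin $4$-manifold $\widehat\Sigma$. (Here $\Sigma$ is spin because it is an orientable $3$-manifold.)

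The third step builds the contracting map. Because $\pi$ is distance-decreasing, the composition
\[
\widehat\Sigma\to\mathbb R^3\times S^1\to S^3\times S^1,
\qquad \text{using } \varphi_r(x)=\varphi(x/r),
\]
is $O(1/r)$-contracting. Tameness of $X$ together with $\partial_*[\Sigma]=\alpha\neq0$ should make this map have nonzero degree after passing to a finite cover and collapsing, giving a map $\widehat\Sigma\to S^4$ that is constant outside $\pi^{-1}(B_r)\times S^1$. Running the relative index argument of Theorem~\ref{thm:link_infinity} then produces a twisted Dirac operator with nonzero index. The Weitzenb\"ock term is $\tfrac14 R_{\tilde g}-\|\mathcal R^E\|$, where
\[
\|\mathcal R^E\|\le C'/r^2 .
\]
If $c_0^2>4C'$, this term is positive everywhere on the support, which is a contradiction. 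Therefore $\min_{\pi^{-1}(B_r)\cap\Sigma}|A|\le C/r$ with $C=2\sqrt{C'}$, and the points realizing this minimum lie at distance $\lesssim r$ from the base point. That last fact follows from distance-decreasing $\pi$ only in the reverse direction, so I would replace $r$ by $r(p)$ using properness and tameness; this is the form stated.

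The main obstacle is the degree/relative-index step. Two things must be checked. First, the boundary-at-infinity hypothesis $\partial_*[\Sigma]=\alpha$ must really force nonzero degree of the collapsed map on $\widehat\Sigma$; I would compute it as the intersection of a generic fiber of $\pi|_\Sigma$ with $\Sigma$, which should equal $\operatorname{Lk}$-type pairings with $\alpha$. Second, the index must be Fredholm, which requires positivity of $R_{\tilde g}$ outside a compact set. For this I would first try to use the assumed contradiction bound $|A|\ge c_0/r$ holding at all scales. Failing that, I would take a product with a large $S^2$, as in the proof of Theorem~\ref{thm:link_infinity}, to obtain coercivity at infinity.
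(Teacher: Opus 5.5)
Your outline is the argument the paper intends. The paper gives no separate proof, only the remark that the proof of Theorem~\ref{thm:link_infinity} can be \emph{read in the opposite direction}: use $R_{\tilde g}=R_M+|A|^2\ge |A|^2$ on the Fischer-Colbrie--Schoen warped product $\Sigma\times_u S^1$ and run the same $O(1/r)$-contracting collapse map and relative index argument. Fredholmness has to come from the paper's auxiliary-manifold (large $S^2$) device, as in that proof. Your first option, the bound $|A|\ge c_0/r$, only gives positivity decaying like $|\pi|^{-2}$ and hence no coercivity at infinity.

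What this argument yields is $\min_{\pi^{-1}(B_r)\cap\Sigma}|A|\le C/r$. Your proposed upgrade to $r(p)$ via properness and tameness does not work. A distance-decreasing $\pi$ only gives the lower bound $r(p)\ge |\pi(p)|-|\pi(p_0)|$, so the $C/r$ form is the one to state.
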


\section{Small Tori and Obstructions to Positive Scalar Curvature}\label{sec:small_torus}

\subsection{The 3--Dimensional Picture}
In this subsection, we briefly recall several results on positive scalar curvature
in dimension three, following \cite{gromov1983positive}.
These results will serve as a conceptual model for the higher--dimensional
constructions introduced later. Throughout this section, all manifolds are assumed to be oriented, unless stated otherwise.

In dimension three, positive scalar curvature imposes very strong topological
constraints. In particular, certain embedded surfaces and curves act as
obstructions to the existence of complete metrics of uniformly positive scalar
curvature. Understanding these obstructions will motivate the definition of
\emph{small tori} in higher dimensions.

For compact $3$--manifolds, the classical Kneser--Milnor prime decomposition theorem
\cite{kneser1929geschlossene,milnor1962unique}, together with Perelman’s resolution
of the Poincar\'e conjecture, gives a complete topological classification.
Every closed orientable $3$--manifold admits a unique decomposition
\[
M = X_1 \# \cdots \# X_l \# (S^2 \times S^1) \# \cdots \# (S^2 \times S^1)
   \# K_1 \# \cdots \# K_m,
\]
where the $X_i$ are spherical manifolds and the $K_j$ are aspherical.

For non--compact $3$--manifolds, the situation is subtler.
Gromov and Lawson \cite{gromov1983positive} introduced special classes of surfaces
and curves which obstruct the existence of complete metrics of uniformly positive
scalar curvature. These objects capture essential topological information at
infinity and play a central role in our discussion.

We begin with the notion of taut surfaces, which may be viewed as a
non--compact generalization of incompressible surfaces.
\begin{dfn}[Taut and incompressible surfaces]\label{def:taut_incompressible}
Let $X$ be a manifold.
A surface $S \subset X$ is said to be \emph{taut} if it is properly embedded,
has infinite fundamental group, and the induced homomorphism
\[
\pi_1(S) \longrightarrow \pi_1(X)
\]
is injective. A taut surface is called \emph{incompressible} if it is compact.
\end{dfn}

Every incompressible surface is taut.  
For example, the surface $S \times \{0\} \subset S \times \mathbb{R}$ (with $S$ compact)
is incompressible, while the surface
\[
S^1 \times \mathbb{R} \subset S^1 \times \mathbb{R}^2
\]
is taut.

We recall the following obstruction to positive scalar curvature due to
Gromov--Lawson \cite{gromov1983positive}.
\begin{thm}[Gromov--Lawson {\cite[Theorem~8.4]{gromov1983positive}}]
\label{thm:taut_surface_obstruction}
Let $X$ be an open $3$-manifold.
If $X$ contains a taut surface, then $X$ admits no complete metric of
uniformly positive scalar curvature.
\end{thm}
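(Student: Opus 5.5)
The plan is the classical Schoen--Yau/Gromov--Lawson scheme. Minimize area in the homotopy class of the taut surface, obtaining a stable minimal surface that is either compact or complete non-compact. Then use the uniform lower bound $R_g \ge \sigma > 0$ to rule out both cases via the stability inequality. Throughout, suppose for contradiction that $g$ is a complete metric on $X$ with $R_g\ge\sigma>0$, and let $S\subset X$ be a taut surface.

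\emph{Step 1: producing a stable minimal surface.} Since $\pi_1(S)\to\pi_1(X)$ is injective and $\pi_1(S)$ is infinite, I would pick a non-trivial element $\gamma\in\pi_1(S)$.
\begin{itemize}
\item If $S$ is compact (the incompressible case), I would use the Schoen--Yau/Sacks--Uhlenbeck existence theory for incompressible surfaces, whose $\pi_1$-injectivity prevents degeneration. This gives a least-area immersed surface $\Sigma$ homotopic to $S$, two-sided after passing to an orientation double cover if needed.
\item If $S$ is non-compact, I would exhaust $S$ by compact subsurfaces $S_j$ whose boundaries escape to infinity, since $S$ is properly embedded. I would solve the Plateau problem for the least-area surface $\Sigma_j$ with $\partial\Sigma_j=\partial S_j$ in the class of maps inducing the same (injective) map on $\pi_1$. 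Local area bounds and the curvature estimates for stable minimal surfaces in $3$-manifolds (Schoen's estimate) then give a subsequential limit $\Sigma_\infty$, which is a complete, two-sided, stable minimal immersed surface.
\end{itemize}
To keep the limit non-empty and topologically non-trivial, I would fix a loop representing $\gamma$ in a compact region. By $\pi_1$-injectivity, each $\Sigma_j$ contains a curve homotopic to it in $X$, and $\Sigma_j$ must meet a fixed compact set $K$ (or a suitable cover).

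\emph{Step 2: ruling out both cases.} Stability together with the Gauss equation gives, for $\phi\in C_c^\infty(\Sigma)$,
\[
\int_\Sigma |\nabla\phi|^2+K_\Sigma\phi^2\;\ge\;\frac12\int_\Sigma (R_g+|A|^2)\phi^2\;\ge\;\frac{\sigma}{2}\int_\Sigma\phi^2 .
\]
\begin{itemize}
\item If $\Sigma$ is compact, taking $\phi\equiv1$ gives $2\pi\chi(\Sigma)>0$, as in the proof of Proposition~\ref{prop:existence_band}. So $\Sigma$ is a sphere, contradicting the fact that $\pi_1(\Sigma)$ is infinite.
\item If $\Sigma$ is non-compact, the inequality says $-\Delta+K_\Sigma-\sigma/2\ge0$. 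The Fischer--Colbrie--Schoen argument \cite{fischer1980structure}, or equivalently the Gromov--Lawson diameter estimate, then bounds the intrinsic distance from any point of $\Sigma$ by $C/\sqrt{\sigma}$. By completeness, $\Sigma$ is therefore compact, a contradiction.
\end{itemize}

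\emph{Expected obstacle.} The main difficulty is Step 1 in the non-compact case. Here I must ensure that the limit $\Sigma_\infty$ retains an essential loop, rather than sliding off to infinity or collapsing the homotopically non-trivial part. To handle this, I would work in the covering space of $X$ corresponding to $\pi_1(S)$, where $S$ lifts and becomes a homotopy equivalence onto its image. I would then use a homological linking argument: choose a proper arc or cycle that intersects $S$ algebraically once, and require every $\Sigma_j$ to meet it in a fixed compact set. This forces the intersection to persist in the limit.
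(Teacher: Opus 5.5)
\paragraph{Verdict.} The analysis in your Step~2 is sound, but Step~1 cannot produce the surface it is applied to. This is a genuine gap, not a technicality.

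\paragraph{The main gap: the limit is always empty.} The diameter estimate you invoke in Step~2 is local: it applies to any intrinsic ball of a two-sided stable surface that avoids the boundary. Apply it to your Plateau solutions. Every point of $\Sigma_j$ then lies within $C/\sqrt{\sigma}$ of $\partial\Sigma_j=\partial S_j$. Since $\partial S_j$ eventually leaves every compact set and $X$ is complete, so does $\Sigma_j$. Hence, under the contradiction hypothesis, the limit $\Sigma_\infty$ is \emph{always empty}. The entire content of the proof is therefore a topological mechanism forcing $\Sigma_j$ to meet a fixed compact set, and none of your mechanisms provides one.
\begin{itemize}
\item A curve in $\Sigma_j$ homotopic in $X$ to a fixed loop can lie anywhere, e.g.\ $S^1\times\{x\}$ with $|x|\to\infty$ in $S^1\times\mathbb{R}^2$.
\item A proper arc $\beta$ with $\beta\cdot S=1$ at best forces $\Sigma_j\cap\beta\neq\emptyset$ somewhere on the non-compact arc $\beta$, and that point may run off to infinity.
\item A compact loop $\delta$ with $\delta\cdot S\neq 0$ exists only if $[S]\neq 0$ in $H^{\mathrm{lf}}_2(X;\mathbb{Q})$. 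This fails for separating taut surfaces such as $S=S^1\times L\subset S^1\times\mathbb{R}^2$ with $L$ a line. There the cover associated with $\pi_1(S)$ is $X$ itself, so passing to it changes nothing.
\end{itemize}
Even a non-empty limit would not obviously retain an essential loop; it could be a stable sphere.

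\paragraph{Two smaller problems.}
\begin{itemize}
\item In the compact case, the Schoen--Yau/Sacks--Uhlenbeck existence theory needs a compact (or suitably controlled) ambient manifold. In an open complete $X$, a minimizing sequence in the homotopy class of $S$ may escape to infinity.
\item For one-sided $S$, the orientation cover of the \emph{surface} is the wrong fix. Stability then controls only test functions that are odd under the deck involution, so $\phi\equiv 1$ is not admissible. Instead, pass to the double cover of $X$ given by mod-$2$ intersection with $S$; there $S$ lifts to a two-sided taut surface.
\end{itemize}

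\paragraph{A route that works.} The paper gives no proof of this statement; it only cites Gromov--Lawson. Its own tools, however, point to a route that avoids all of the above: reduce to a small circle in a cover and argue at a finite stage.
\begin{itemize}
\item With $S$ two-sided, choose an essential simple closed curve $\gamma\subset S$; it has infinite order in $\pi_1(X)$. Let $X'\to X$ be the cover with $\pi_1(X')=\langle\gamma\rangle\cong\mathbb{Z}$.
\item The lift of $S$ through the lift $\gamma'$ of $\gamma$ is a properly embedded two-sided annulus with core $\gamma'$. One open half of it is a proper surface in $X'\setminus\gamma'$ meeting the meridian of $\gamma'$ exactly once. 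So the meridian has infinite order in $H_1(X'\setminus\gamma')$, while $\gamma'$ generates $H_1(X')\cong\mathbb{Z}$. Thus $\gamma'$ is a small circle in the sense of Definition~\ref{def:small_circle}.
\item Take $M_0$ as in Lemma~\ref{lem:intersection-hypersurface}, with $M_0\cdot\gamma'=1$ and $\partial M_0$ outside a huge ball. Then $\partial M_0$ is $\pm$ the meridian in $H_1(X'\setminus\gamma')$, so every $2$-chain bounded by $\partial M_0$ meets the \emph{compact} curve $\gamma'$.
\item A homological area minimizer with boundary $\partial M_0$ exists in a large domain whose boundary is made mean convex far from $\gamma'$. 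It is stable and meets $\gamma'$.
\item The boundary version of the diameter estimate gives $d(\gamma',\partial M_0)\le C/\sqrt{\sigma}$, a contradiction since $\partial M_0$ can be chosen arbitrarily far away.
\end{itemize}
This handles compact and non-compact $S$ at once and needs neither homotopy-class minimization nor a limit.
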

\begin{rem}
If $X$ satisfies the hypothesis of
Theorem~\ref{thm:taut_surface_obstruction}, then so does the connected sum
$X \# Y$ for any $3$-manifold $Y$.
\end{rem}

The obstruction provided by taut surfaces admits an equivalent formulation in
terms of embedded curves, known as \emph{small circles}.
This reformulation will be particularly convenient for our purposes, since it
suggests a natural higher--dimensional analogue.
The obstruction in Theorem~\ref{thm:taut_surface_obstruction} admits an
alternative formulation, also due to Gromov--Lawson.

\begin{dfn}
\label{def:small_circle}
Let $X$ be a $3$-manifold.
A smoothly embedded circle $\gamma \subset X$ is called \emph{small} if
\begin{enumerate}
\item $\gamma$ has infinite order in $H_1(X)$, and
\item the normal circle to $\gamma$ represents an element of infinite order in
$H_1(X \setminus \gamma)$.
\end{enumerate}
\end{dfn}

\begin{thm}[Gromov--Lawson {\cite[Theorem~8.7]{gromov1983positive}}]
\label{thm:small_circle}
Let $X$ be an open $3$-manifold with $H_1(X)$ finitely generated.
If $X$ contains a small circle, then $X$ admits no complete metric of
uniformly positive scalar curvature.
\end{thm}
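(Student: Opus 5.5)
We argue by contradiction, following the strategy of Gromov--Lawson. Suppose $g$ is a complete metric on $X$ with $R_g\ge\sigma>0$, and let $\gamma$ be a small circle. The first task is to turn the two homological conditions in Definition~\ref{def:small_circle} into a properly embedded surface that has to meet $\gamma$. Since $[\gamma]$ has infinite order and $H_1(X)$ is finitely generated, Lemma~\ref{lem:intersection-hypersurface} applies with $n=3$. For every precompact open $D$ containing a neighborhood of $\gamma$, it gives a compact surface $M_0$ with $\partial M_0\subset X\setminus D$ and algebraic intersection $M_0\cdot\gamma=1$.

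Condition (2) enters next. The normal circle $\nu$ of $\gamma$ has infinite order in $H_1(X\setminus\gamma)$, and $\nu$ bounds the meridian disk in $X$. We use this to show that the boundary curves $\partial M_0$ cannot be capped off cheaply far away. Concretely, take an exhaustion $D_1\subset D_2\subset\cdots$ of $X$ by smooth compact domains. For each $j$ we obtain a surface $M_j$ with $\partial M_j\subset\partial D_j$, chosen so that $\partial M_j$ is nonzero in $H_1(X\setminus D_{j-1})$ compatibly in $j$. This is where finite generation of $H_1(X)$ is used: it lets the classes $[\partial M_j]$ be chosen consistently, as the dual of $[\gamma]$ restricted to the complement.

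As in the proof of Theorem~\ref{thm:link_infinity} (and \cite[Corollary~10.9]{gromov1983positive}), we then deform $g$ near $\partial D_j$ so that $\partial D_j$ becomes mean convex. We solve the Plateau problem, minimizing area in the class $\{S:\partial S=\partial M_j,\ S\cdot\gamma=1\}$. The constraint $S\cdot\gamma=1$ forces each minimizer $S_j$ to meet $\gamma$, so the minimizers do not escape to infinity. Stability gives local curvature bounds (Lemma~\ref{lem:stable-curvature-estimate}), so a subsequence converges on compacta. The limit is a complete, two-sided, stable minimal surface $S_\infty$ that meets $\gamma$. We will use two features of $S_\infty$: it is noncompact, because its boundary at infinity represents a nontrivial class, and it is area-minimizing in its class, so it carries the homology dual to $\gamma$.

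The curvature contradiction then follows \cite{gromov1983positive, fischer1980structure}. For a complete stable minimal surface in a $3$-manifold with $R\ge\sigma$, the stability inequality with $\varphi$ a logarithmic cutoff on intrinsic balls gives
\[
\int_{S_\infty}\Bigl(\tfrac{\sigma}{2}+\tfrac12|A|^2\Bigr)\varphi^2\le\int_{S_\infty}\bigl(|\nabla\varphi|^2+K\varphi^2\bigr).
\]
Together with Gauss--Bonnet / Cohn-Vossen, this forces $S_\infty$ to be compact, with intrinsic diameter at most $2\pi/\sqrt{3\sigma}$ by the Schoen--Yau diameter bound. That contradicts the noncompactness of $S_\infty$ established above.

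The main obstacle is that noncompactness step: showing that the limiting surface cannot become compact through the minimizers' boundaries collapsing or pinching off. To handle it, I would argue that if $S_\infty$ were compact, then for large $j$ a compact piece of $S_j$ near $\gamma$ would already give a closed surface $\Sigma$ with $\Sigma\cdot\gamma=1$. Cutting along $\Sigma$ and using condition (2), the normal circle $\nu$ would then bound in $X\setminus\gamma$ up to torsion, which contradicts (2).
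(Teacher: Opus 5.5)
The paper does not prove this statement; it quotes it from Gromov--Lawson. Its own argument is Theorem~\ref{thm:small_torus_obstruction} with $n=3$ (Remark~\ref{rem:small_torus_circle}).

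\textbf{The paper's route.} Conditions (1) and (2) together are equivalent to injectivity of $H_1(\partial\nu(\gamma))\cong\mathbb Z^2\to H_1(X\setminus\gamma)$, because the kernel of $H_1(X\setminus\gamma)\to H_1(X)$ is generated by the meridian. Finite generation is used to clear denominators in a rational splitting of this map. Since $T^2=K(\mathbb Z^2,1)$, the splitting is realized by a map $f\colon X\setminus\nu(\gamma)\to T^2$ of nonzero degree on $\partial\nu(\gamma)$. The $\mu$--bubble exhaustion (Theorem~\ref{thm:exhaustion}, Lemma~\ref{lem:psc-degree-obstruction}) then gives spectrally positive surfaces homologous to $\partial\nu(\gamma)$. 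By Proposition~\ref{prop:existence_band} these are unions of spheres, and every map $S^2\to T^2$ has degree $0$, which is the contradiction.

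\textbf{Your route.} You give the classical minimal-surface argument, and in outline it is correct. The real mechanism is the dichotomy in your last paragraph. Condition (2) is equivalent to saying that every closed surface, indeed every integral $2$--cycle, in $X$ has algebraic intersection zero with $\gamma$: remove or cap meridian disks. On the other side, the Schoen--Yau/Gromov--Lawson diameter bound makes every complete two-sided stable minimal surface in $R\ge\sigma$ closed. So a locally minimizing limit with nonzero intersection number with $\gamma$ cannot exist.

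\textbf{Comparison.} The paper's route uses only compact $\mu$--bubbles in a band. It needs no limits, no curvature estimates and no three-dimensional rigidity, and it works verbatim for small tori when $n\le 7$. Yours rests on compactness of complete stable minimal surfaces under $R\ge\sigma>0$. That fact is special to dimension $3$ (cf.\ Example~\ref{ex:psc_noncompact_stable}). In exchange, your route makes the role of (2) more transparent.

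\textbf{What to tighten.}
\begin{enumerate}
\item Drop the claim that $S_\infty$ is noncompact ``because its boundary at infinity is nontrivial''. Nontriviality of $[\partial M_j]$ does follow from (2) directly, since capping it off outside $D_{j-1}$ would produce a closed surface meeting $\gamma$ nontrivially. But it says nothing about the limit.
\item Instead, pass the intersection number to the limit current $T$ by pairing with a closed Thom $2$--form $\omega$ supported near $\gamma$; this also takes care of multiplicity. You get $T(\omega)=\lim\int_{S_j}\omega\neq 0$, while each closed component of $T$ contributes $0$.
\item Finite generation is really what lets you take the $M_j$ to be truncations of one proper surface dual to a global class $\phi\in H^1(X;\mathbb Z)$ with $\phi[\gamma]\neq 0$. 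This fixes the intersection number; in general it is nonzero rather than $1$, which suffices.
\item In dimension $3$ the compactness comes from Schoen's curvature estimate, or from compactness of minimizing currents. Lemma~\ref{lem:stable-curvature-estimate} is four-dimensional and does not apply.
\end{enumerate}
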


\subsection{Small Tori in Higher Dimensions}

The notion of a small circle captures a topological feature that is invisible to
local geometry but cannot coexist with uniformly positive scalar curvature.
Our goal is to isolate an analogous structure in higher dimensions.
\begin{dfn}[Small torus]\label{def:small_torus}
Let $X$ be an $n$--dimensional manifold with $H_1(X)$ finitely generated.
Suppose that there exists a smooth embedding
\[
\iota \colon T^{n-2} \hookrightarrow X
\]
whose normal bundle is trivial. Let $Z$ denote the unit normal circle bundle of $\iota(T^{n-2})$.
We say that $\iota(T^{n-2})$ is a \emph{small torus} in $X$ if the embedding
$\iota$ induces an injective homomorphism
\[
H_1(Z) \longrightarrow H_1\bigl(X \setminus \iota(T^{n-2})\bigr).
\]
\end{dfn}
\begin{rem}\label{rem:small_torus_circle}
When $n=3$, the torus $T^{n-2}$ reduces to a circle, and
Definition~\ref{def:small_torus} recovers the notion of a
small circle introduced by Gromov--Lawson
(Definition~8.6 in \cite{gromov1983positive}).
\end{rem}

Our main result shows that the presence of a small torus obstructs the existence
of complete metrics of uniformly positive scalar curvature, extending the
Gromov--Lawson obstruction from dimension three to higher dimensions.
\begin{thm}\label{thm:small_torus_obstruction}
For $n \leq 7$, let $X$ be an open $n$--manifold with $H_1(X)$ finitely generated.
If $X$ contains a small torus, then $X$ admits no complete metric of
uniformly positive scalar curvature.
Moreover, there exists an end $X_+ \subset X$ such that there is no
complete Riemannian metric $g$ on $X$ satisfying
\[
R_g \ge \sigma > 0 \quad \text{on } X_+ .
\]
\end{thm}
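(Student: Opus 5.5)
We argue by contradiction, following the Gromov--Lawson strategy for small circles (Theorem~\ref{thm:small_circle}), but with the $\mu$--bubble exhaustion of Theorem~\ref{thm:exhaustion} replacing the three-dimensional minimal surface argument. Write $T=\iota(T^{n-2})$ and let $\theta\subset Z$ be a normal circle fibre. The injectivity of $H_1(Z)\to H_1(X\setminus T)$ implies two facts. First, $\theta$ has infinite order in $H_1(X\setminus T)$. Second, since $Z\cong T^{n-2}\times S^1$, every coordinate circle $c_i$ of the torus, pushed off into $Z$, is non-torsion in $H_1(X\setminus T)$; hence $[T]$-directions are detected away from $T$.

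\textbf{Step 1: choosing the end.} Using the non-torsion class $[\theta]$ in $H_1(X\setminus T)$, with $H_1(X)$ finitely generated, I will argue as in Lemma~\ref{lem:intersection-hypersurface}. Poincar\'e duality produces, in $X\setminus T$ and hence in $X$, a properly embedded non-compact hypersurface $S$ with two properties: $\partial$-free in $X$, $S\cap T=\emptyset$ fails, and $S$ meets $T$ transversally with algebraic intersection number with $\theta$ equal to one. Concretely, $S$ will be a ``spanning'' hypersurface that $T$ pierces, and the part of $S$ escaping to infinity lies in some end. The finite generation of $H_1(X)$ is used here, as in Gromov--Lawson: the class dual to $\theta$ cannot be compactly supported, so its support must run off into an end. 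I take $X_+$ to be an end in which $S$ is non-compact. In addition, each circle $c_i$ is dual to a class $u^i\in H^1(X\setminus T)$, and these restrict nontrivially to $S$.

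\textbf{Step 2: the exhaustion and the torical contradiction.} Suppose $R_g\ge\sigma$ on $X_+$. I apply the construction of Theorem~\ref{thm:exhaustion} only inside $X_+$, taking the inner boundary to be a fixed compact hypersurface cutting off $X_+$. This produces separating hypersurfaces $\Sigma_i\subset X_+$ whose conformal Laplacian satisfies $\lambda_1(L_{\Sigma_i})\ge\delta>0$, so $\Sigma_i$ carries positive scalar curvature. Next I use the linking data. The class $[\theta]$, or dually the piece of $S$ in $X_+$, forces $\Sigma_i\cap S$ to be non-empty and homologically essential: it represents a nonzero class in $H_{n-2}$ pairing nontrivially with $u^1\cup\cdots\cup u^{n-2}$ pulled back to $\Sigma_i$. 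Then, via a degree argument, the product of the maps $\Sigma_i\to S^1$ defined by the $u^i$, together with the defining function of $\Sigma_i\cap S$, gives a nonzero-degree map $\Sigma_i\to T^{n-1}$ on a component. This contradicts the Schoen--Yau torus obstruction in dimensions $n-1\le 6$. If this degree argument fails, the fallback is to iterate the $\mu$--bubble slicing to reach a $2$--torus with positive scalar curvature, which is impossible by Gauss--Bonnet. The first assertion of the theorem follows from the second, by taking $g$ uniformly positive on all of $X$.

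\textbf{Main obstacle.} The hard part is Step 1 combined with the essentiality claim in Step 2. I must show that the topological data genuinely lives at infinity in a single end, so that the exhaustion hypersurfaces $\Sigma_i$ far out still capture the cup product $u^1\cup\cdots\cup u^{n-2}$ and the $\theta$-dual class. The concern is that the $\Sigma_i$ could avoid all of this topology. To rule this out I would first try the Gromov--Lawson inverse-limit argument: finite generation of $H_1(X)$ makes the images of $H_1(X\setminus K)\to H_1(X)$ stabilize, so for large $K$ the classes $[c_i]$ and $[\theta]$ have representatives outside $K$ within one end. Each $\Sigma_i$ separates $T$ from these representatives. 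Intersection theory with $S$ then gives the required nonvanishing pairing.
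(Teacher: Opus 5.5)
\textbf{There is a genuine gap.} It sits exactly at the essentiality claim in Step 2, and the way you set that step up makes the claim fail in general.

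Unwound, your assertion that $\Sigma_i\cap S$ pairs nontrivially with $u^1\cup\cdots\cup u^{n-2}$ says $\deg(f|_{\Sigma_i})\neq 0$ for $f=(u^0,u^1,\dots,u^{n-2})\colon X\setminus T\to T^{n-1}$, where $u^0$ is the class cut out by $S$. You state this but never prove it. The only mechanism that yields it is homological:
\begin{itemize}
\item choose the $u^a$ as a rescaled dual basis to $\theta,c_1,\dots,c_{n-2}$, which injectivity plus finite generation allows;
\item then $f|_Z$ has degree $\det\bigl(u^a(c_b)\bigr)\neq 0$;
\item any hypersurface homologous to $Z$ in $Y=X\setminus\nu(T)$ inherits a nonzero degree.
\end{itemize}
But you run the exhaustion only inside one end, with inner boundary a cross-section $\partial X_+$. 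Your $\Sigma_i$ are therefore homologous to $\partial X_+$, not to $Z$. They do not separate $T$ from the other ends, contrary to your last paragraph. Moreover, $f_*[\partial X_+]$ is only one summand of $f_*[Z]=\sum_e f_*[\partial_e K]$, taken over the boundary pieces of a large compact $K\supset\nu(T)$, and that summand can vanish.

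\textbf{Your rule for choosing the end does not control this summand.} It can even pick an end on which the localized statement is false. Take $n=5$ and
\[
X=(T^3\times\mathbb{R}^2)\#(T^2\times S^2\times\mathbb{R}),
\]
with $T=T^3\times\{0\}$ in the first summand. This is a small torus, since $H_1(X\setminus T)\cong H_1(T^3\times S^1)\oplus\mathbb{Z}^2$. Let $u^0$ take the value $1$ on $\theta$ and on a circle $a$ of the $T^2$-factor of the second summand, and $0$ on $c_1,c_2,c_3$.
\begin{itemize}
\item $S$ is forced to escape into both ends of the second summand.
\item On their cross-sections $T^2\times S^2$ the quadruple cup product vanishes.
\item Those ends do carry complete metrics with $R\ge\sigma$: flat $T^2$ times a small round $S^2$ times a half-line.
\end{itemize}
Also, $S$ can be changed by adding locally finite boundaries so that it runs into any end at all.

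\textbf{What is missing is the step the paper actually takes.}
\begin{enumerate}
\item Realize the rational splitting of $H_1(Z)\to H_1(Y)$ by a map $f\colon Y\to Z\cong T^{n-1}$, using that the torus is a $K(\pi,1)$.
\item Run the $\mu$-bubble construction in $Y$ with inner boundary $Z$. The paper's $X_+$ is essentially $Y$ itself. Each resulting $\partial\Omega_i$ is then homologous to $Z$.
\item Conclude that some component of $\partial\Omega_i$ carries both a conformal positive scalar curvature metric and a nonzero-degree map to $T^{n-1}$. This contradicts Schoen--Yau in dimension $n-1\le 6$.
\end{enumerate}
The hypersurface $S$ plays no role in this. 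If you want a genuinely single end, select it by requiring $f_*[\partial U]\neq 0$ on nested neighbourhoods $U$ of infinity, not by where $S$ escapes.
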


\begin{rem}\label{rem:comparison_chodosh_li}
Compared with the theorem resolving the generalized Geroch
conjecture with arbitrary ends~\cite[Theorem~3]{chodosh2024generalized},
which operates under the existence of a suitable compactification at infinity,
Theorem~\ref{thm:small_torus_obstruction} yields a strictly weaker obstruction,
especially for manifolds of the form $T^n \# X$.
Indeed, our approach detects obstructions on suitable
non-compact coverings of $M = T^n \# X$ via the presence of a small torus.
For instance, consider a surjection $\pi_1(M)\to\mathbb Z^2$.
Then the associated covering $\widetilde M$ contains a small torus.
While this leads to a weaker conclusion, it is conceptually closer
to the classical small circle obstruction and is flexible enough
to apply in more general non-compact settings without assuming
compactifiability at infinity.
\end{rem}

\begin{proof}[Proof of Theorem~\ref{thm:small_torus_obstruction}]
Let $Z \subset X$ be a small torus.
Denote by $i : Z \hookrightarrow X \setminus \iota(T^{n-2})$ the inclusion.
Suppose by contrary that $X$ admits a complete metric of uniformly positive scalar
curvature.

\medskip
\noindent\textbf{Step~1:}
By assumption, $H_1(X)$ is finitely generated.
Since $X \setminus \iota(T^{n-2})$ is obtained from $X$ by removing a samll torus,
$H_1(X \setminus \iota(T^{n-2}))$ is a finitely generated abelian group.
Tensoring with $\mathbb{Q}$, we obtain an inclusion of finite--dimensional
$\mathbb{Q}$--vector spaces
\begin{equation*}
i_* \otimes \mathrm{id}_{\mathbb{Q}} :
H_1(Z;\mathbb{Q}) \hookrightarrow
H_1(X \setminus \iota(T^{n-2});\mathbb{Q}).
\end{equation*}
Since $H_1(Z;\mathbb{Q}) \cong \mathbb{Q}^{n-2}$ and the injectivity of $i_* \otimes \mathrm{id}_{\mathbb{Q}}$ (Definition~\ref{def:small_torus}),
we may choose a $\mathbb{Q}$--linear projection
\begin{equation*}
p_{\mathbb{Q}} :
H_1(X \setminus \iota(T^{n-2});\mathbb{Q})
\longrightarrow H_1(Z;\mathbb{Q})
\end{equation*}
such that
$p_{\mathbb{Q}} \circ (i_* \otimes \mathrm{id}_{\mathbb{Q}})
= \mathrm{id}.
$
Multiplying by a suitable integer clears denominators and yields
a homomorphism
\begin{equation}\label{eq:projection-Z}
p : H_1(X \setminus \iota(T^{n-2}))
\longrightarrow H_1(Z) \cong \mathbb{Z}^{n-2}
\end{equation}
such that the composition
$
p \circ i_* : H_1(Z) \longrightarrow H_1(Z)
$
is injective.

\medskip
\noindent\textbf{Step~2.}
Set
$
Y := X \setminus \nu\bigl(\iota(T^{n-2})\bigr).
$
Since $Z=T^{n-2}$ is an Eilenberg--MacLane space
$K(\mathbb Z^{n-2},1)$, there is a natural identification
\begin{equation}\label{eq:YZ-identification}
[Y,Z]
\;\cong\;
H^1(Y;\mathbb Z^{\,n-2})
\;\cong\;
\operatorname{Hom}\bigl(H_1(Y),\mathbb Z^{\,n-2}\bigr).
\end{equation}
Hence the homomorphism
\begin{equation}\label{eq:p-homology}
p:H_1(Y)\longrightarrow H_1(Z)\cong\mathbb Z^{\,n-2}
\end{equation}
is realized by a continuous map
$
f:Y\longrightarrow Z,
$
which is unique up to homotopy.

\medskip
Moreover, by construction the induced map
\begin{equation}\label{eq:f-induced}
f_*|_{H_1(Z)} = p\circ i_* :
H_1(Z)\longrightarrow H_1(Z)
\end{equation}
is injective.
Since $H_1(Z)$ is free abelian, the universal coefficient theorem yields
\begin{equation*}
H^1(Z;\mathbb Z)\cong \mathrm{Hom}(H_1(Z),\mathbb Z),
\end{equation*}
and hence
\begin{equation}\label{eq:fZ-surjective}
(f|_Z)^*:H^1(Z;\mathbb Z)\to H^1(Z;\mathbb Z)
\end{equation}
is surjective.

Since $Z$ is an $(n-2)$--torus, we have
\begin{equation}\label{eq:top-cohomology-torus}
H^{n-2}(Z;\mathbb Q)
\cong \bigwedge^{n-2} H^1(Z;\mathbb Q)
\cong \mathbb Q.
\end{equation}
Taking exterior products, the surjectivity of $(f|_Z)^*$ on $H^1(Z)$ implies that
\begin{equation}\label{eq:top-degree-map}
(f|_Z)^*:
H^{n-2}(Z;\mathbb Q)
\longrightarrow H^{n-2}(Z;\mathbb Q)
\end{equation}
is a non-zero homomorphism.
Equivalently,
\begin{equation*}
(f|_Z)^*([Z])=\deg(f|_Z)\,[Z]
\end{equation*}
with $\deg(f|_Z)\neq 0$.

\medskip
\noindent\textbf{Step~3:}
The existence of a non-zero degree self-map of $Z$ will lead to a contradiction
once combined with the enlargeability of $Z$.
To make this precise, we use the following lemma, which treats a slightly more
general situation. We postpone the proof of the following lemma.
\end{proof}

\begin{lem}\label{lem:psc-degree-obstruction}
Let $(X,g)$ be a non-compact complete Riemannian manifold, and let
$Z \subset X$ be a closed hypersurface.
Suppose that $X_+ \subset X$ is a non-compact connected component with
$\partial X_+ = Z$, and that there exists a continuous map
\[
f \colon X_+ \to Z
\]
whose restriction $f|_Z \colon Z \to Z$ has non-zero degree. Assume that $X_+$ has uniformly positive scalar curvature $R_g\ge\sigma>0$ on $X_+$.

Then there exists a family of compact, connected, separating hypersurfaces
$\Sigma_j \subset X_+$ $(j \in \mathbb{N})$ with spectrally positive scalar curvature
such that the following holds:
\begin{enumerate}
  \item There exists at least one index set $J \subset \mathbb{N}$ such that
  \begin{equation}\label{eq:lf-hlogy}
    \sum_{j \in J} [\Sigma_j] = [Z]
    \quad \text{in } H^{\mathrm{lf}}_{n-1}(X).
  \end{equation}

  \item For any such index set $J$, there exists at least one $j \in J$
  such that the restricted map
  \begin{equation}\label{eq:degree-on-Sigma}
    f|_{\Sigma_j} \colon \Sigma_j \longrightarrow Z
  \end{equation}
  has non-zero degree.
\end{enumerate}

\end{lem}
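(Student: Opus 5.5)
The plan is to run the $\mu$--bubble exhaustion of Theorem~\ref{thm:exhaustion} inside the end $X_+$, starting from $Z=\partial X_+$, and then use $f$ to detect degree on the resulting hypersurfaces.

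\textbf{Construction of the $\Sigma_j$.} Since $R_g\ge\sigma>0$ on $X_+$, fix $D>D(\sigma)$. Apply Proposition~\ref{prop:existence_band} iteratively to bands $A_{j+1}=M'_{j+1}\setminus\operatorname{int}(\Omega_j)$, where $\Omega_0$ is a thin collar of $Z$ and $M'_{j+1}\subset X_+$ is a compact domain with $d(\partial M'_{j+1},\partial\Omega_j)>D$. Take $\partial^-=\partial\Omega_j$ and $\partial^+=\partial M'_{j+1}$. This produces $\mu$--bubbles $\partial\Omega_{j+1}$, smooth for $n\le7$ by Lemma~\ref{lem:existence_mu_bubbles}, which are spectrally positive in the sense of Proposition~\ref{prop:existence_band}. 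We list the connected components of all the $\partial\Omega_j$ as a single sequence $\Sigma_j$. Each $\Sigma_j$ is compact, connected, and separating in $X_+$, because it is a boundary component of a compact region.

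\textbf{Homological identity \eqref{eq:lf-hlogy}.} For each $k$, the compact region $W_k:=\Omega_k$ (together with the collar) satisfies
\[
\partial W_k = Z \sqcup \partial\Omega_k
\]
with suitable orientations. Hence $[Z]=\sum_{\Sigma_j\subset\partial\Omega_k}[\Sigma_j]$ already in $H_{n-1}(X_+)$, and therefore in $H^{\mathrm{lf}}_{n-1}(X)$. So the index set $J_k$ of components of $\partial\Omega_k$ works, which gives item~(1).

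\textbf{Degree argument, item~(2).} Let $J$ be any index set satisfying \eqref{eq:lf-hlogy}. Push the identity forward by $f$. The relation is realized by a compact $n$--chain $W\subset X_+$ with
\[
\partial W = Z-\sum_{j\in J}\Sigma_j .
\]
Since the $\Sigma_j$ are compact and $[Z]$ is an ordinary cycle, this chain should be compact; where needed, I would pass from locally finite to compact chains by intersecting with a large compact set and using that each $\Sigma_j$ is compact. Then
\[
f_*[Z]=\sum_{j\in J} f_*[\Sigma_j]\quad\text{in } H_{n-1}(Z).
\]
The left side is $\deg(f|_Z)[Z]\neq0$. So some $f_*[\Sigma_j]=\deg(f|_{\Sigma_j})[Z]$ is nonzero, which is item~(2).

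\textbf{Main obstacle.} The main subtlety is the passage between locally finite and compact homology in item~(2). An identity in $H^{\mathrm{lf}}_{n-1}(X)$ might a priori be realized only by a noncompact $n$--chain, which cannot be pushed forward and evaluated in $H_{n-1}(Z)$. I would handle this by observing that $H^{\mathrm{lf}}_n$ classes of a connected noncompact manifold are multiples of an infinite fundamental class, which is not supported on $X_+$ alone. Differences of two compact bounding chains therefore vanish, so the relation descends to $H_{n-1}(X_+)$, where $f$ is defined. If that fails, I would restrict $J$ to the finite index sets produced in item~(1). Beyond this bookkeeping, the only other point to check is orientation consistency of the $\mu$--bubble boundaries.
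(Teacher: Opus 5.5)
Your core argument is correct and follows the same skeleton as the paper's proof. You run a $\mu$--bubble exhaustion, take the compact region between $Z$ and the bubble components to get $[Z]=\sum_{j\in J_k}[\Sigma_j]$, and push forward by $f$ to obtain $\deg(f|_Z)=\sum_{j\in J_k}\deg(f|_{\Sigma_j})$.

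Your version is actually cleaner in two respects:
\begin{itemize}
\item You run the exhaustion inside $X_+$, starting from a collar of $Z$. That is all the hypothesis $R_g\ge\sigma$ on $X_+$ allows; the paper exhausts all of $X$ under a global curvature bound.
\item You work in $H_{n-1}(X_+)$, where $f_*$ into $H_{n-1}(Z)$ is legitimate. The paper instead pushes forward locally finite classes along a proper map $X_+\to Z\times[0,1)$. But $H^{\mathrm{lf}}_*(Z\times[0,1))=0$, so its identification of the target with $H_{n-1}(Z)$ does not hold.
\end{itemize}

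The genuine problem is your resolution of the ``main obstacle''. In fact no resolution exists, because the locally finite version of the statement is false.
\begin{itemize}
\item A relation $\sum_{j\in J}[\Sigma_j]-[Z]=\partial c$ in $H^{\mathrm{lf}}_{n-1}(X)$ involves a locally finite chain $c$ with compact boundary, not a locally finite cycle. So the fact that $H^{\mathrm{lf}}_n(X)\cong\mathbb{Z}$ is generated by the fundamental class says nothing about it.
\item Concretely, the locally finite fundamental chain of $X_+$ has boundary $\pm[Z]$, so $[Z]=0$ in $H^{\mathrm{lf}}_{n-1}(X)$ always. Likewise, every $\Sigma_j$ that separates $X_+$ bounds the locally finite chain of its outer side.
\item Hence \eqref{eq:lf-hlogy} is satisfied by $J=\emptyset$. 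If the $\Sigma_j$ separate as claimed, it is satisfied by every finite $J$.
\item Item~(2) therefore fails for $J=\emptyset$, and for any singleton $\{j\}$ with $\deg(f|_{\Sigma_j})=0$.
\item Truncating $c$ by a compact set does not help either, since truncation creates new boundary.
\end{itemize}

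So drop the descent argument and commit to your fallback. Require the relation in \eqref{eq:lf-hlogy} to hold in $H_{n-1}(X_+)$, i.e.\ to be realized by a compact $n$--chain in $X_+$. Your push-forward argument then proves item~(2) for every finite such $J$. This is exactly what the proof of Theorem~\ref{thm:small_torus_obstruction} uses.

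A minor point: a single boundary component of a compact region need not separate. For example, join the collar of $Z$ by a tube to a thickened non-separating hypersurface; one boundary component of the result is a push-off of that hypersurface and does not separate. So ``separating'' does not follow from your construction, though it is never used in the argument.
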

\begin{proof}[Proof of Theorem~\ref{thm:small_torus_obstruction} (continued)]
First, we fix some non-compact component $X_+\subset Y$ whose boundary is $Z$. By Lemma~\ref{lem:psc-degree-obstruction}, there exists a compact separating
hypersurface $\Sigma\subset X_+$ with spectrally positive scalar curvature such
that the restriction
\begin{equation*}
f|_\Sigma:\Sigma\longrightarrow Z
\end{equation*}
has non-zero degree.
Since $Z$ is enlargeable, the existence of such a hypersurface contradicts the
assumption that $X$ admits a complete metric of uniformly positive scalar
curvature.
This completes the proof.
\end{proof}
\begin{proof}[Proof of Lemma~\ref{lem:psc-degree-obstruction}]
Assume that $X$ admits a complete metric of uniformly positive scalar
curvature.
By the $\mu$-bubble exhaustion (Theorem~\ref{thm:exhaustion}),
there exists an exhaustion of $X$ by compact domains
\begin{equation*}
K_1 \subset K_2 \subset \cdots \subset X
\end{equation*}
such that each boundary $\partial K_i$ is a smooth, two-sided hypersurface
with spectrally positive scalar curvature.
Moreover, for $i$ sufficiently large, $\partial K_i$ separates $X$
into an interior compact region and an exterior end.

Fix $i$ large enough so that some connected component of $\partial K_i$
is contained in the region $X_+$.
Since each $\partial K_i$ separates, we obtain an identity in locally finite
homology
\begin{equation}\label{eq:lf-homology-identity}
\sum_j [\Sigma_j]=[Z]
\quad\text{in } H_{n-1}^{\mathrm{lf}}(X),
\end{equation}
where each $\Sigma_j$ is a connected component of $\partial K_i$.

We now reformulate the argument at the level of locally finite homology.
By a small homotopy of $f$ on the target, we may replace $Z$ by a one-sided collar neighborhood $Z\times[0,1]$. Here and in the sequel, we identify the zero slice $Z\times\{0\}$ with $Z$.
(Strictly speaking, we replace $f$ by
$F(p)=(f(p),\varphi\circ h(p))$, where $h$ is a smooth regularization of
$\operatorname{dist}(\,\cdot\,,Z)$ and $\varphi:\mathbb{R}_{\ge 0}\to[0,1)$ is a
diffeomorphism; by abuse of notation we continue to denote this map by $f$.)
We regard $f$ as a proper map
\begin{equation*}
f:X_+\longrightarrow Z\times[0,1],
\end{equation*}
which induces a homomorphism on locally finite homology
\begin{equation}\label{eq:f-induced-lf}
f_*:H_{n-1}^{\mathrm{lf}}(X_+)\longrightarrow
H_{n-1}^{\mathrm{lf}}(Z\times[0,1]).
\end{equation}

Recall that in $H_{n-1}^{\mathrm{lf}}(X_+)$ we have the identity
\begin{equation}\label{eq:lf-identity-recall}
\sum_j [\Sigma_j]=[Z].
\end{equation}
Suppose, to the contrary, that
\begin{equation}\label{eq:fSigma-zero}
f_*([\Sigma_j])=0
\quad\text{in } H_{n-1}^{\mathrm{lf}}(Z\times[0,1])
\end{equation}
for every $j$.
By linearity of $f_*$, this implies
\begin{equation}\label{eq:fZ-zero}
f_*([Z])=\sum_j f_*([\Sigma_j])=0
\quad\text{in } H_{n-1}^{\mathrm{lf}}(Z\times[0,1]).
\end{equation}

However, the class $[Z]$ is represented by a compact hypersurface contained
in $Z\times\{0\}$, and hence defines a non-trivial class in
\begin{equation}\label{eq:Z-homology}
H_{n-1}(Z)\cong H_{n-1}^{\mathrm{lf}}(Z\times[0,1]).
\end{equation}
This contradiction shows that at least one of the hypersurfaces $\Sigma_j$
satisfies
\begin{equation}\label{eq:fSigma-nonzero}
f_*([\Sigma_j])\neq 0
\quad\text{in } H_{n-1}(Z).
\end{equation}
We note that the above argument does not depend on any special choice of the
index set $J$. Once an index set $J \subset \mathbb{N}$ satisfies
\eqref{eq:lf-hlogy}, the same reasoning applies verbatim.
In particular, for any such $J$, there exists at least one $j \in J$
for which the restricted map
\[
f|_{\Sigma_j} \colon \Sigma_j \to Z
\]
has non-zero degree.
\end{proof}

\section{Positive scalar curvature and cycles at infinity}\label{sec:psc_cycle}
First, we recall the notion of Schoen--Yau--Schick manifolds (SYS manifolds),
following definition in \cite{schick1998counterexample,schoen1987structure}.

\begin{dfn}\label{def:sys_mfd}
Let $n \ge 2$.
A compact orientable $n$--manifold $M$ is called a
\emph{Schoen--Yau--Schick manifold} (or \emph{SYS manifold}) if there exist
homology classes
\[
h_1,\ldots,h_{n-2} \in H_1(M;\mathbb{Z})
\]
such that the $2$--dimensional homology class
\[
\sigma := h_1 \frown \cdots \frown h_{n-2} \frown [M]
\in H_2(M;\mathbb{Z})
\]
is \emph{non-spherical}, that is, $\sigma$ does not lie in the image of the
Hurewicz homomorphism
\[
\pi_2(M) \longrightarrow H_2(M;\mathbb{Z}) .
\]
\end{dfn}

A basic example of an SYS manifold is the torus.
Using minimal surface techniques together with an inductive descent
argument, Schoen and Yau \cite{schoen1987structure} proved that SYS manifolds
of dimension at most $7$ do not admit Riemannian metrics of positive scalar
curvature.
Later, Schick \cite{schick1998counterexample} constructed an SYS manifold
that provides a counterexample to the unstable
Gromov--Lawson--Rosenberg conjecture.

\begin{thm}\label{thm:Ray_obstruction}
Let $n \le 7$.
Let $X^n$ be a connected oriented non-compact manifold, and let
$\varepsilon$ be an isolated end of $X$.
Assume that there exists a proper ray
$\gamma \subset \varepsilon$
whose Poincar\'e dual at infinity
\[
\alpha_\gamma \in H^{n-1}_\infty(\varepsilon)
\]
admits a decomposition
\[
\alpha_\gamma
=
u^1 \cup u^2 \cup \cdots \cup u^{n-1},
\qquad
u^i \in H^1_\infty(\varepsilon).
\]
Then there exists no complete Riemannian metric $g$ on $\varepsilon$
satisfying
\[
R_g \ge \sigma > 0
\quad \text{on } \varepsilon .
\]

\end{thm}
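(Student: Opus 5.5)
We argue by contradiction: suppose $g$ is complete with $R_g\ge\sigma>0$ on $\varepsilon$. The plan has three stages. First, localize to the end with a $\mu$--bubble. Second, transfer the cup-product structure to a closed hypersurface that carries spectrally positive scalar curvature. Third, reach a contradiction via the Schoen--Yau descent for SYS-type manifolds (Definition~\ref{def:sys_mfd}).

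\textbf{Localization.} Since $\varepsilon$ is isolated, we may choose a compact set $K$ such that one component $U$ of $X\setminus K$ is a neighbourhood of $\varepsilon$, $\gamma\subset U$, and the classes $u^1,\dots,u^{n-1}$ are represented in $H^1(U)$. Each $u^i$ is realized by a smooth map $f_i\colon U\to S^1$, and we set $f=(f_1,\dots,f_{n-1})\colon U\to T^{n-1}$.

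Inside $U$ we choose an inner boundary $\partial^-$, namely a smoothing of $\partial U$ pushed slightly in, and outer boundaries $\partial^+_D$ at distance $D>D(\sigma)$ from $\partial^-$. Applying Proposition~\ref{prop:existence_band} to the band between them, or running the exhaustion of Theorem~\ref{thm:exhaustion} restricted to $U$, produces a closed two-sided hypersurface $\Sigma\subset U$. It separates $\partial^-$ from $\partial^+_D$, and it satisfies $\lambda_1(L_\Sigma)\ge\sigma-\pi^2/D^2>0$ (for $n=3$ its components are spheres). Because $\Sigma$ separates, it is homologous to $\partial^-$ in $H_{n-1}(U)$. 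In particular, $\gamma$ meets $\Sigma$ algebraically once, since the tail of the proper ray $\gamma$ must cross from the inner side to infinity.

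\textbf{Transfer of the cup product.} By the construction of $\alpha_\gamma$ in Proposition~\ref{prop:ray-infinity}, we have $\langle\alpha_\gamma,[\Sigma]\rangle=\gamma\cdot\Sigma=\pm1$. The restriction to $U$ is compatible with the direct limit, so
\[
\bigl\langle u^1\cup\cdots\cup u^{n-1},[\Sigma]\bigr\rangle
=\deg\bigl(f|_\Sigma\colon\Sigma\to T^{n-1}\bigr)\neq0 .
\]
Passing to a connected component of $\Sigma$, we obtain a closed $(n-1)$-manifold $\Sigma_0$ with a nonzero-degree map to $T^{n-1}$. In particular $\Sigma_0$ is an SYS manifold: its classes $f^*d\theta_i$ cap down to a non-spherical $2$-class, because their pushforward is the fundamental class of a $2$-subtorus.

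\textbf{Contradiction.} Spectral positivity $\lambda_1(L_\Sigma)>0$ lets us conformally change $\Sigma_0$ to a metric of positive scalar curvature, exactly as in the proof of Proposition~\ref{prop:existence_band}. This contradicts the Schoen--Yau theorem that SYS manifolds of dimension $\le 6$ admit no PSC metric. For $n=3$, it instead contradicts the fact that $S^2$ has no nonzero-degree map to $T^2$.

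\textbf{Main obstacle.} The main obstacle is making the pairing step rigorous: we must ensure that the representatives $u^i$ live on a single neighbourhood $U$ containing the bubble, and that $\langle\alpha_\gamma,[\Sigma]\rangle=\pm1$ holds with the correct sign conventions. The first attempt would be to choose $K$ only after fixing representatives, and to choose $\partial^-$ outside it. A further subtlety is that $\partial^+_D$ must be compact; we take it as the boundary of a large smooth compact domain, using completeness of $g$ to guarantee the distance $D$.
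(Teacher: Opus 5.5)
Your proposal is correct and follows essentially the paper's route. Both arguments take a $\mu$--bubble from Proposition~\ref{prop:existence_band} in a far band of the end, on which $\alpha_\gamma=u^1\cup\cdots\cup u^{n-1}$ pairs nontrivially because the bubble is homologous to the cross-section of the end, so some component is SYS yet conformally PSC, contradicting Schoen--Yau. You streamline this by evaluating directly on the bubble via $\gamma\cdot\Sigma=\pm1$, instead of going through the paper's auxiliary SYS hypersurface and ray $\gamma_\Sigma$, and you make explicit the step the paper leaves implicit: passing to a component with nonzero degree to $T^{n-1}$.

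The one thing to tidy is to take $\gamma(0)\in K$ rather than $\gamma\subset U$. Then the tail of $\gamma$ in $U$ is a locally finite cycle whose Poincar\'e dual represents $\alpha_\gamma$ there, and it meets the separating $\Sigma$ algebraically exactly once.
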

\begin{ex}
First, let us consider
\[
M = \mathbb{CP}^2 \# T^4,
\]
and let
$
\pi \colon \widetilde M \to M
$
be the infinite cyclic cover associated with a primitive (i.e.\ indivisible) cohomology class
$
\phi \in H^1(M;\mathbb Z)
$
coming from one of the $S^1$--factors of $T^4$.
Then $\widetilde M$ is a connected, oriented, non-compact $4$--manifold with
exactly two isolated ends.
(The primitivity of $\phi$ implies surjectivity onto $\mathbb Z$, and hence connectedness of the cover.)
Topologically, each end consists of an infinite chain of copies of
$\mathbb{CP}^2 \setminus B^4$,
attached along $3$--spheres to
$\mathbb{R} \times T^3$.

In what follows, all non-compact manifolds constructed using the ends described above
provide concrete examples to which Theorem~\ref{thm:Ray_obstruction} applies.
More precisely, each such manifold contains an isolated end admitting a proper ray
whose Poincar\'e dual at infinity decomposes as a cup product of degree--one classes (In this case, the end cohomology at infinity is of rank one, so the Poincar\'e dual
automatically admits the required decomposition, and Theorem~\ref{thm:Ray_obstruction}
applies
).
Therefore it cannot carry a complete Riemannian metric of uniformly positive scalar curvature.

\end{ex}

\begin{proof}[Proof of Theorem~\ref{thm:Ray_obstruction}]
We argue by contradiction.
Assume that $X$ admits a complete Riemannian metric $g$
with uniformly positive scalar curvature.

\medskip
\noindent\textbf{Step~1.}
We begin by unpacking the definition of cohomology at infinity.
Choose an exhaustion of $X$ by relatively compact subsets
\begin{equation*}
K_1 \subset K_2 \subset \cdots \subset X .
\end{equation*}
By definition,
\begin{equation*}
H^{n-1}_\infty(X;\mathbb{R})
\;=\;
\varinjlim_j H^{n-1}(X \setminus K_j;\mathbb{R}) .
\end{equation*}
Hence the class $\alpha_\gamma \in H^{n-1}_\infty(X;\mathbb{R})$
is represented by a cohomology class
\begin{equation}\label{eq:alpha-gamma-j}
\alpha_{\gamma,j} \in H^{n-1}(X \setminus K_j;\mathbb{R})
\end{equation}
for all sufficiently large $j$, and these representatives are compatible
under restriction.
In particular, for $j$ large enough, we may write
\begin{equation}\label{eq:alpha-wedge}
\alpha_{\gamma,j}
=
u_j^1 \wedge \cdots \wedge u_j^{n-1}
\quad \text{in } H^{n-1}(X \setminus K_j;\mathbb{R}),
\end{equation}
where each $u_j^k \in H^1(X \setminus K_j;\mathbb{R})$.
This simply reflects the fact that we are unpacking the inductive
limit description of the cohomology class at infinity.

\medskip
\noindent\textbf{Step~2.}
We now use the cohomology class constructed in Step~1 to produce a
nontrivial hypersurface at infinity.
For $j$ sufficiently large, consider the class
\begin{equation}\label{eq:alpha-epsilon}
\alpha_{\gamma,j}
=
u_j^1 \wedge \cdots \wedge u_j^{n-1}
\in H^{n-1}(\varepsilon \setminus K_j;\mathbb{R}) .
\end{equation}
By Poincar\'e duality on $\varepsilon \setminus K_j$, this class evaluates
nontrivially on some $(n-1)$--dimensional homology class.
Hence we can find a connected, properly embedded hypersurface
\begin{equation*}
\Sigma \subset \varepsilon \setminus K_j
\end{equation*}
such that
\begin{equation}\label{eq:pairing-nonzero}
\langle \alpha_{\gamma,j}, [\Sigma] \rangle \neq 0 .
\end{equation}
Equivalently, $\Sigma$ represents a nontrivial homology class in
$H_{n-1}(\varepsilon \setminus K_j)$ that is detected by the cohomology
class at infinity.
In particular, $\Sigma$ is a Schoen--Yau--Schick (SYS) hypersurface
in the sense of Definition~\ref{def:sys_mfd}.

By an argument analogous to that of Lemma~\ref{lem:intersection-hypersurface}, there exists a proper ray
at infinity
\begin{equation}\label{eq:gamma-Sigma}
\gamma_\Sigma \subset \varepsilon \setminus K_j
\end{equation}
such that the algebraic intersection number satisfies
\begin{equation}\label{eq:intersection-number}
\gamma_\Sigma \cdot \Sigma = 1 .
\end{equation}
Since $\varepsilon$ is an isolated end of $X$, the proper ray
$\gamma_\Sigma$ represents the same locally finite homology class as
$\gamma$, that is,
\begin{equation}\label{eq:lf-homology}
[\gamma] = [\gamma_\Sigma]
\in H_1^{\mathrm{lf}}(\varepsilon \setminus K_j) .
\end{equation}
Consequently, the associated cohomology classes (defined in Proposition~\ref{prop:ray-infinity}) coincide ,
\begin{equation}\label{eq:alpha-coincide}
\alpha_\gamma = \alpha_{\gamma_\Sigma}
\in H^{n-1}(\varepsilon \setminus K_j) .
\end{equation}

\medskip
\noindent\textbf{Step~3.}
On the other hand, the argument can be localized entirely to a sufficiently
far band inside the end $\varepsilon$.
Fix a compact set $K_j \subset X$ and consider a compact domain
$
M \subset \varepsilon \setminus K_j
$
whose boundary decomposes as
$
\partial M = \partial^- M \sqcup \partial^+ M,
$
with both components non-empty such that the distance between the
two boundary components satisfies
$
d(\partial^- M, \partial^+ M) > D(\sigma),
$
where $D(\sigma)$ is the constant from
Proposition~\ref{prop:existence_band}.
By Proposition~\ref{prop:existence_band}, there exists a smoothly embedded,
closed, two-sided hypersurface
$
\Sigma^{n-1} \subset \operatorname{int}(M) \subset \varepsilon \setminus K_j
$
which admits a Riemannian metric of positive scalar curvature.
By construction, $\Sigma$ separates the same homology at infinity as the chosen
cross section of the end, and hence represents the same class in
$
H_{n-1}(\varepsilon \setminus K_j).
$

However, these two hypersurfaces have incompatible geometric properties.
By construction, each $\partial \Omega_i$ has spectrally positive scalar
curvature and hence admits a conformal metric of positive scalar curvature.
Recall that $\Sigma$ is a Schoen--Yau--Schick (SYS) hypersurface
(Definition~\ref{def:sys_mfd}), and therefore admits no metric of
positive scalar curvature.
Since $\partial \Omega_i$ and $\Sigma$ represent the same homology class
in $H_{n-1}(\varepsilon \setminus K_j)$ for $i$ sufficiently large,
this yields a contradiction.
\end{proof}

\appendix
\section{Weakly bounded geometry}
\label{subsec:weakly-bounded-geometry}\label{sec:weakly_bdd}

The proof strategy in this section is based on ideas developed in \cite{chodosh2024complete}.
While the presentation is adapted to our setting and notation, the overall strategy follows their approach.

\begin{dfn}[Weakly bounded geometry]
\label{def:weakly-bounded-geometry}
Let $(X^n,g)$ be a complete Riemannian manifold.
We say that $(X,g)$ has \emph{$Q$--weakly bounded geometry} if for every
$p\in X$ there exists a $C^{2,\alpha}$ local diffeomorphism
\[
\Phi \colon (B(0,Q^{-1}),0)\subset \mathbb{R}^n \to (U,p)\subset X
\]
such that
\begin{enumerate}
  \item
  the pullback metric satisfies
  \[
  e^{-2Q}\,\delta \le \Phi^* g \le e^{2Q}\,\delta
  \quad\text{as bilinear forms,}
  \]
  \item
  the metric coefficients satisfy
  \[
  \|\partial \Phi^* g\|_{C^{\alpha}} \le Q .
  \]
\end{enumerate}
We say that $(X,g)$ has \emph{weakly bounded geometry} if this holds for some
$Q<\infty$.
\end{dfn}

\begin{rem}
This condition allows for collapsing behavior at infinity.
For instance, hyperbolic cusps have weakly bounded geometry, even though their
injectivity radius tends to zero.
It is well known that a uniform sectional curvature bound
$|\sec_g|\le K$ implies weakly bounded geometry for some $Q=Q(K)$.
Alternatively, bounds of the form $|\operatorname{Ric}_g|\le K$ together with a uniform lower
injectivity radius bound also suffice.
\end{rem}

We now record a curvature estimate {\cite[Lemma~2.4]{chodosh2024complete}} for stable minimal hypersurfaces in
$4$--manifolds with weakly bounded geometry.

\begin{lem}
\label{lem:stable-curvature-estimate}
Let $(X^4,g)$ be a complete Riemannian $4$--manifold with $Q$--weakly bounded
geometry.
Then there exists a constant $C=C(Q)<\infty$ such that every compact two-sided
stable minimal immersion
\[
M^3 \to (X^4,g)
\]
satisfies
\[
\sup_{q\in M} |A_M(q)| \,\min\{1,d_M(q,\partial M)\} \le C .
\]
\end{lem}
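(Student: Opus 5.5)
The statement to prove is Lemma~\ref{lem:stable-curvature-estimate}, the curvature estimate for compact two-sided stable minimal immersions $M^3\to(X^4,g)$ under $Q$--weakly bounded geometry. I will argue by contradiction, using a point-picking and blow-up scheme in the spirit of \cite{chodosh2024complete}. The blow-up limit will then be ruled out by the Chodosh--Li classification of complete two-sided stable minimal hypersurfaces in $\mathbb{R}^4$ \cite{chodosh2024stable}.

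First, suppose the estimate fails. Then there are $Q$--weakly bounded $(X_k,g_k)$, stable immersions $M_k$, and points $q_k$ with
\[
|A_{M_k}(q_k)|\,\min\{1,d_{M_k}(q_k,\partial M_k)\}\to\infty .
\]
I apply the standard point-picking lemma to the function $q\mapsto |A(q)|\min\{1,d(q,\partial M_k)\}$. This produces new points $p_k$ and radii $r_k\to 0$ with the following properties: $\lambda_k:=|A(p_k)|$ satisfies $\lambda_k r_k\to\infty$, and on the intrinsic ball $B^{M_k}(p_k,r_k)$, which stays away from $\partial M_k$, we have $|A|\le 2\lambda_k$.

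Next, I lift to a chart. Because $M_k$ is immersed, I use the weakly bounded geometry charts $\Phi_k\colon B(0,Q^{-1})\to X_k$ centered at the image of $p_k$. I pull back the intrinsic ball as an immersion into $(B(0,Q^{-1}),\Phi_k^*g_k)$ by lifting along the local diffeomorphism; this is possible since the intrinsic ball is simply traced and the chart is a local diffeomorphism. I then rescale the ambient coordinates by $\lambda_k$. The rescaled metrics $\lambda_k^2\Phi_k^*g_k$ on $B(0,\lambda_kQ^{-1})$ are uniformly equivalent to $\delta$ by condition (1). By condition (2), their first derivatives have $C^\alpha$ norm at most $Q\lambda_k^{-1-\alpha}\to0$, so after Arzel\`a--Ascoli they converge in $C^{1,\alpha'}_{loc}$ to a constant-coefficient metric, which is isometric to flat $\mathbb{R}^4$. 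The rescaled immersions have $|A|\le 2$, $|A|(0)=1$, and intrinsic radius $\lambda_kr_k\to\infty$. Uniform second fundamental form bounds give uniform graphical radius over tangent planes, and the minimal surface equation (with $C^{1,\alpha}$ coefficients) upgrades this to $C^{2,\alpha}$ estimates. Passing to a subsequence, the immersions converge to a complete two-sided immersed minimal hypersurface $M_\infty\looparrowright\mathbb{R}^4$ with $|A_{M_\infty}|(0)=1$.

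The remaining steps, in order, are these. Stability passes to the limit, because the stability inequality involves only $|A|^2$ and $\operatorname{Ric}(\nu,\nu)$. The Ricci term is the delicate point: only $\partial g$ is controlled, so $\operatorname{Ric}$ is not bounded pointwise. I will handle it by writing the stability form in divergence form, integrating the second-derivative terms of the metric by parts against $\phi^2$; after scaling these terms carry a factor $\lambda_k^{-1-\alpha}$ and vanish in the limit. Two-sidedness passes to the limit via the convergence of unit normals. Chodosh--Li \cite{chodosh2024stable} then forces $M_\infty$ to be flat, contradicting $|A_{M_\infty}|(0)=1$.

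I expect the main obstacle to be exactly this handling of the ambient curvature term under only $C^{1,\alpha}$ metric control, together with making the convergence of immersions rigorous when the charts are merely local diffeomorphisms (collapsing). If the integration-by-parts trick is too lossy, my fallback is to first approximate $g$ by smoothing within each chart, keeping $Q$ comparable, and to carry out the argument for the smoothed metrics.
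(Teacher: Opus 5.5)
Your outline is the same as the paper's proof: point-picking, lifting to the weakly-bounded-geometry charts, rescaling to a $C^{1,\alpha}$-flat limit, $C^{2,\alpha}$ compactness, and the Chodosh--Li Bernstein theorem in $\mathbb{R}^4$. The paper also simply asserts that the limit is stable. You are right that this is the delicate step, but your remedy does not work. Integrating by parts over $M_k$ removes only derivatives \emph{tangent} to $M_k$. The term $\operatorname{Ric}(\nu,\nu)$, however, contains second \emph{normal} derivatives of the metric. For $g=e^{2u}g_{\mathbb{R}^4}$ with $u=0$ and $du=0$ along $M$, one has $\operatorname{Ric}_g(\nu,\nu)=-2\nabla^2u(\nu,\nu)-\Delta u$. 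Definition~\ref{def:weakly-bounded-geometry} does not control this quantity at all, and it need not scale away. Smoothing $g$ does not help either, because the given immersion is neither minimal nor stable for the smoothed metric. (Separately, ``simply traced'' does not justify the lift. $\Phi_k$ is not a covering, so you must build abstract pointed manifolds $(S_k,s_k)$ with local diffeomorphisms to $M_k$, as in \cite[Appendix~C]{chodosh2024complete}, and run Cheeger--Gromov compactness on those.)

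In fact no argument can close this step under the stated hypotheses, because the lemma as written is false. Here is a counterexample.

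Let $\Sigma_\lambda=\lambda^{-1}\Sigma_1$, where $\Sigma_1\subset\mathbb{R}^4$ is the $3$--dimensional catenoid, and let $s$ be the signed distance to $\Sigma_\lambda$. Set $g=e^{2u}g_{\mathbb{R}^4}$ with $u=\tfrac{\kappa}{2}s^2\chi(s/\delta_0)$ and $\delta_0\ll\lambda^{-1}$, multiplied by a cutoff equal to $1$ on $B(0,3)$. Since $u$ and $du$ vanish on $\Sigma_\lambda$, the hypersurface stays minimal with the same induced metric and second fundamental form. At the same time $\operatorname{Ric}_g(\nu,\nu)=-3\kappa$. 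With $\kappa=\sup|A_{\Sigma_\lambda}|^2$, the piece $\Sigma_\lambda\cap\overline{B(0,2)}$ is therefore strictly stable. On the other hand $\|\partial g\|_{C^\alpha}\lesssim\kappa\,\delta_0^{1-\alpha}\le 1$ once $\delta_0$ is small. So $Q$ stays fixed while $|A|$ at the neck is of order $\lambda\to\infty$.

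Along $\Sigma_\lambda$ this metric has $R_g=-6\kappa$, which points to the missing hypothesis: a one-sided curvature bound. Suppose $R_X\ge -K$; this holds in Theorem~\ref{thm:link_infinity}, where $R_g\ge 0$. Then the Gauss equation rewrites stability as
\[
\tfrac12\int_M\bigl(|A|^2+R_X\bigr)\phi^2\le\int_M|\nabla\phi|^2+\tfrac12\int_M R_M\phi^2 ,
\]
and now integration by parts does work:
\begin{enumerate}
\item $\int_M R_M\phi^2$ depends only on the induced metric. After an intrinsic integration by parts it involves only first derivatives of that metric, so it converges under the $C^{1,\beta}$ convergence of the induced metrics.
\item The ambient term contributes at least $-\tfrac{K}{2}\lambda_k^{-2}\int\phi^2\to 0$ after rescaling.
\item In the flat limit $R_M=-|A|^2$, which yields $\int|A|^2\phi^2\le\int|\nabla\phi|^2$.
\end{enumerate}
A lower Ricci bound, for example from $|\sec|\le K$, gives stability of the limit directly. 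With such a hypothesis, and $C=C(Q,K)$, the rest of your argument goes through.
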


\begin{proof}
Suppose the conclusion fails.
Then there exists a sequence of compact two-sided stable minimal immersions
$M_i^3 \to (X_i^4,g_i)$ such that
\begin{equation}
\sup_{q\in M_i} |A_{M_i}(q)| \,\min\{1,d_{M_i}(q,\partial M_i)\} \to \infty .
\end{equation}
Since the function on the left is continuous and vanishes on $\partial M_i$,
there exists a point $p_i\in M_i\setminus\partial M_i$ achieving the supremum.
Set
\[
r_i := |A_{M_i}(p_i)|^{-1} \to 0 ,
\]
and let $x_i$ denote the image of $p_i$ in $X_i$.

By the weakly bounded geometry assumption, there exist local diffeomorphisms
\[
\Phi_i \colon (B(0,Q^{-1}),0)\subset \mathbb{R}^4 \to (X_i,x_i)
\]
with uniformly controlled metric coefficients.
Using the lifting construction (cf.{\cite[Appendix~C]{chodosh2024complete}}), we obtain
pointed manifolds $(S_i,s_i)$, immersions

\begin{equation}
F_i \colon (S_i,s_i) \to (B(0,Q^{-1}),0),
\end{equation}
and local diffeomorphisms
\[
\Psi_i \colon (S_i,s_i) \to (M_i,p_i),
\]
such that $F_i$ is a two-sided stable minimal immersion with respect to
$\Phi_i^* g_i$.

Define the rescaling maps
\[
D_i(x)= r_i x ,
\]
and consider the rescaled metrics
\[
\tilde g_i := r_i^{-2} D_i^* \Phi_i^* g_i
\]
on $B(0,r_i^{-1}Q^{-1})$.
By construction, the metrics $\tilde g_i$ converge in $C^{1,\alpha}_{\mathrm{loc}}$ to
the Euclidean metric on $\mathbb{R}^4$.

Let
\[
\tilde F_i := D_i^{-1}\circ F_i .
\]
Then $\tilde F_i$ is a stable minimal immersion with respect to $\tilde g_i$.
The point-picking construction implies that for every fixed $\rho<\infty$,
there exists $C(\rho)$ such that

\begin{equation}
|A_{\tilde F_i}(q)| \le C(\rho)
\quad\text{whenever } d(q,s_i)\le \rho .
\end{equation}
Standard graphical arguments and elliptic estimates yield uniform local
$C^{2,\alpha}$ bounds for $\tilde F_i$.
In particular, the induced metrics $\tilde h_i := \tilde F_i^*\tilde g_i$ have
uniformly bounded curvature and injectivity radius on compact subsets.

By pointed Cheeger--Gromov compactness, a subsequence converges to a complete
Riemannian manifold $(S,h,s)$, and the immersions $\tilde F_i$ converge in
$C^{2,\beta}_{\mathrm{loc}}$ (for any $\beta<\alpha$) to a limiting immersion

\begin{equation}
F \colon (S,s) \to (\mathbb{R}^4,0).
\end{equation}
The limit immersion is complete, two-sided, stable, minimal, and satisfies
$|A_F(s)|=1$.
Due to the theorem of \cite{chodosh2023stable}, such an immersion must be
flat, a contradiction.
This completes the proof.
\end{proof}

\section{A model metric with quadratic scalar curvature decay}\label{sec:construction_qdecay}
\subsection{Computation of the scalar curvature}

First, we briefly review the construction of a complete smooth Riemannian metric $g$ on $\mathbb R^2\times T^{n-2}$
with positive scalar curvature and quadratic decay at infinity in \cite{gromov2018metric, chen2025optimal}.

Let $n\ge 3$.
We work on $\mathbb R^2\times T^{n-2}$ with polar coordinates $(r,\theta)$ on
$\mathbb R^2$ and flat metric $g_{T^{n-2}}$ on $T^{n-2}$.
Fix constants $c>0$ and $0<r_0<\frac{\pi}{2}$.

\medskip

For $r\ge r_0$, consider the warped product metric
\begin{equation}\label{eq:gplus}
g_+
=
dr^2
+
c^2 r^{2\alpha}
\bigl(
d\theta^2 + g_{T^{n-2}}
\bigr),
\qquad \alpha>0.
\end{equation}
Let $f(r)=c r^\alpha$ be the warping function.
Since $d\theta^2+g_{T^{n-2}}$ is flat, the scalar curvature of $g_+$ is given by
\begin{equation}\label{eq:scalar-general}
R_{g_+}
=
- (n-1)(n-2)\frac{(f')^2}{f^2}
- 2(n-1)\frac{f''}{f}.
\end{equation}
For $f(r)=c r^\alpha$, we have
$
f'(r)=c\alpha r^{\alpha-1},
\qquad
f''(r)=c\alpha(\alpha-1)r^{\alpha-2}.
$
Substituting into \eqref{eq:scalar-general}, we obtain
\begin{align}
R_{g_+}
&=
- (n-1)(n-2)\frac{\alpha^2}{r^2}
- 2(n-1)\frac{\alpha(\alpha-1)}{r^2}
\notag\\
&=
\frac{1}{r^2}
\Bigl(
- n(n-1)\alpha^2 + 2(n-1)\alpha
\Bigr)
\notag\\
&=
\frac{1}{r^2}
\left(
- n(n-1)\Bigl(\alpha-\frac1n\Bigr)^2
+ \frac{n-1}{n}
\right).
\label{eq:scalar-final}
\end{align}
In particular, if $0<\alpha\le \frac1n$, then $R_{g_+}>0$ and for $x=(r,\theta,\varphi_1,\varphi_2)$,
\[
 R_g(x)
\sim
\frac{1}{\rho^2}
\]
i.e.\ the scalar curvature has quadratic decay.

\medskip

The mean curvature of the hypersurface
$\{r=r_0\}\times S^1\times T^{n-2}$
with respect to the outward unit normal $\partial_r$ is
\begin{equation}\label{eq:Hplus}
H_+
=
(n-1)\frac{f'(r_0)}{f(r_0)}
=
\frac{(n-1)\alpha}{r_0}.
\end{equation}

\medskip

For $r\le 2r_0$, we consider the product-type metric
\begin{equation}\label{eq:gminus}
g_-
=
dr^2
+
\sin^2(r)\,d\theta^2
+
\sin^2(r_0)\, g_{T^{n-2}}.
\end{equation}
Since $dr^2+\sin^2(r)d\theta^2$ is the standard metric on $S^2$,
the metric $g_-$ is smooth at $r=0$ and has scalar curvature
$
R_{g_-}=2.
$

The mean curvature of
$\{r=r_0\}\times S^1\times T^{n-2}$
with respect to $\partial_r$ is
\begin{equation}\label{eq:Hminus}
H_-
=
\frac{\cos(r_0)}{\sin(r_0)}.
\end{equation}
For $r_0>0$ sufficiently small, we have
$
H_- > H_+.
$
Choosing
\[
c=\frac{\sin(r_0)}{r_0^\alpha},
\]
the metrics $g_-$ and $g_+$ agree at $r=r_0$.
By a smoothing argument,
the metric can be smoothed near $r=r_0$ to obtain a complete
smooth Riemannian metric $g$ on $\mathbb R^2\times T^{n-2}$
with positive scalar curvature and quadratic decay at infinity.
\subsection{Quantitative dilation estimates}

We consider the complete Riemannian manifold
\[
X = \widetilde{\mathbb R^2 \times T^2}
\cong \mathbb R^2 \times \mathbb R \times S^1,
\]
obtained by taking the infinite cyclic cover in one of the $T^2$--directions.

Let $(r,\theta)$ be polar coordinates on $\mathbb R^2$.
Write coordinates on $T^2$ as $(\varphi_1,\varphi_2)\in S^1\times S^1$,
and take the covering in the $\varphi_1$--direction, so that
$\varphi_1\in\mathbb R$ and $\varphi_2\in S^1$.

For $r\ge r_0$, the metric takes the warped product form
\begin{equation}\label{eq:metric}
g
=
dr^2
+
c^2 r^{2\alpha}
\bigl(
d\theta^2 + d\varphi_1^2 + d\varphi_2^2
\bigr),
\qquad 0<\alpha\le \frac14.
\end{equation}

We define a proper smooth map
\[
\pi : X \to \mathbb R^3,
\qquad
\pi(r,\theta,\varphi_1,\varphi_2)
=
\bigl(
r\cos\theta,\,
r\sin\theta,\,
\varphi_1
\bigr).
\]
This map is invariant under rotations in the $\theta$--variable and
forgets the $\varphi_2$--direction.
Recall that the $1$--dilation of $\pi$ at $x=(r,\theta,\varphi_1,\varphi_2)$ is given by
\[
\mathrm{dil}_1(\pi)_x
=
\sup_{0\neq v\in T_xX}
\frac{|d\pi(v)|_{\mathbb R^3}}{|v|_g}.
\]
For
$v
=
a\,\partial_r
+
b\,\partial_\theta
+
u\,\partial_{\varphi_1}
+
w\,\partial_{\varphi_2}$,
\[
|v|_g^2
=
a^2
+
c^2 r^{2\alpha}
(b^2+u^2+w^2),
\qquad
|d\pi(v)|^2
=
a^2+r^2b^2+u^2.
\]

Discarding the $\partial_{\varphi_2}$--component (which only increases the
denominator), we obtain
\[
\frac{|d\pi(v)|^2}{|v|_g^2}
\le
\frac{a^2 + r^2 b^2 + u^2}
     {a^2 + c^2 r^{2\alpha}(b^2+u^2)}.
\]
Maximizing over $(a,b,u)\neq 0$ yields
\[
\mathrm{dil}_1(\pi)_x
=
\max\left\{
1,\,
\frac{r}{c r^\alpha},\,
\frac{1}{c r^\alpha}
\right\}
=
\max\left\{
1,\,
\frac{1}{c} r^{1-\alpha}
\right\}.
\]
In particular, for $\rho\to\infty$,
\begin{equation}\label{eq:dilation-growth}
\mathrm{Dil}_1(\pi_\rho)
\sim
\frac{1}{c}\, \rho^{1-\alpha}.
\end{equation}
where we set
$
\pi_\rho := \pi\big|_{\pi^{-1}(B^3(\rho))} \colon \pi^{-1}(B^3(\rho)) \to B^3(\rho).$

Combining \eqref{eq:dilation-growth} with $\min_{\pi^{-1}(B^3(\rho))} R_g
\sim
\frac{1}{\rho^2}$ shows that
\[
\min_{\pi^{-1}(B^3(\rho))} R_g
\sim
\frac{1}{\rho^2}
\;\lesssim\;
\left(\frac{\mathrm{Dil}_1(\pi_\rho)}{\rho}\right)^2
\sim
\rho^{-2\alpha}.
\]

\section{A quantitative perspective on the obstruction}
\label{sec:appendix_quantitative}

In this appendix, we complement the qualitative obstructions developed in Section~\ref{sec:psc_cycle} by analyzing them from a quantitative perspective. Specifically, we show that the uniform positivity of scalar curvature enforces a universal lower bound on the $1$--dilation of proper maps into spaces with cylindrical ends.
\begin{thm}
\label{thm:quantitative_sys_at_infinity}
Let $f \colon X^5 \to Y^5$ be a proper smooth map of non-zero degree between
complete Riemannian manifolds.
Assume that $X$ has uniformly positive scalar curvature
$R_{g_X} \ge \sigma > 0$.
Assume that $Y$ has a cylindrical end
\[
Y = N \cup (\partial N \times \mathbb{R}),
\]
where $N$ is a compact manifold, $\partial N \cong S^3 \times S^1$ and
$\partial N \times \mathbb{R}$ is isometric to
$S^3 \times S^1 \times \mathbb{R}$ with the standard product metric.

Then the $1$--dilation of $f$ satisfies
\[
\operatorname{Dil}_1(f) \;\ge\; \sqrt{\frac{\sigma}{6}} .
\]
\end{thm}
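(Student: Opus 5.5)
Argue by contradiction: suppose $\operatorname{Dil}_1(f) = L < \sqrt{\sigma/6}$, and use the cylindrical end of $Y$ to pull back a long band into $X$. Write the end as $S^3\times S^1\times[0,\infty)$ with coordinate $t$. For $T>0$ large, set
\[
M_T := f^{-1}\bigl(S^3\times S^1\times[T,\,T+\ell]\bigr),
\]
with boundary pieces $\partial^\pm M_T = f^{-1}(S^3\times S^1\times\{T\})$ and $f^{-1}(S^3\times S^1\times\{T+\ell\})$, after perturbing $T$ and $T+\ell$ to regular values. Since $f$ is proper, $M_T$ is compact. The Lipschitz bound gives
\[
d(\partial^-M_T,\partial^+M_T)\ge \ell/L .
\]
Choose $\ell$ so that $\ell/L> D(\sigma)=2\pi/\sqrt{\sigma}$. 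Proposition~\ref{prop:existence_band} then yields a closed two-sided hypersurface $\Sigma^4\subset \operatorname{int}(M_T)$ separating $\partial^-M_T$ from $\partial^+M_T$. It admits a PSC metric, and in fact $\lambda_1(L_\Sigma)\ge \sigma-4\pi^2L^2/\ell^2$.

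Next, transfer the degree. Because $\Sigma$ separates the two boundary components and $\deg f\ne 0$, the composition
\[
\Sigma \xrightarrow{\,f\,} S^3\times S^1\times[T,T+\ell]\to S^3\times S^1
\]
has degree $\deg f\ne 0$. Indeed, $\Sigma$ is homologous in $M_T$ to $\partial^-M_T$, which is the preimage of a regular slice. Compose further with the projection to $S^1$. This produces a class $u=f^*[d\theta]\in H^1(\Sigma)$ whose Poincar\'e dual is a 3-cycle mapping with nonzero degree to $S^3$. Realize it as a level set $\Sigma'=(\theta\circ f)^{-1}(pt)$.

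The intended contradiction is one of the following two. Either obtain a circle-band in the $S^1$ direction of width controlled by $L$ and run the $\mu$-bubble argument once more, or argue quantitatively on the 4-manifold $\Sigma$. I would do the latter, because the constant $6$ suggests a direct Llarull-type/Dirac estimate with the quantitative spectral bound.

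\textbf{Case of small $L$: the circle factor.} The map $\Sigma\to S^3\times S^1$ is $L$-Lipschitz up to the conformal factor, so along the $S^1$ direction one can form a cyclic cover of $\Sigma$ and cut out a band of width at least $2\pi k/L$. The spectral positivity, i.e.\ warping by the first eigenfunction $u$ as in Proposition~\ref{prop:existence_band}, lets us apply the band-width inequality again in dimension $4$. This produces a 3-dimensional $\mu$-bubble $\Sigma''$ mapping with nonzero degree to $S^3$, with $\lambda_1(L_{\Sigma''})$ bounded below by a positive multiple of $\sigma$.

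\textbf{Comparison with $S^3$.} Apply the relative index argument used in the proof of Theorem~\ref{thm:link_infinity}, in its Llarull form on the spin 3-manifold $\Sigma''$, twisted by the pullback of the spinor bundle of $S^3$. A nonzero-degree $L$-Lipschitz map to the unit $S^3$ forces
\[
\min R \le 6L^2 = n(n-1)L^2\big|_{n=3}.
\]
This is spectral, in the warped-product sense of \eqref{eq:scalar-curvature-warped}. Combined with the inherited lower bound $\ge \sigma$ after letting $\ell,k\to\infty$, it gives $\sigma\le 6L^2$, which is the claim.

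The main obstacle is this final step. Two things need care here. First, the spectral (rather than pointwise) lower bound must survive the two successive $\mu$-bubble reductions with no loss in the constant as the widths go to infinity; I would handle this by keeping warped metrics $g+u^2dt^2$ at each stage so that scalar curvature becomes pointwise. Second, the Llarull rigidity must be applied to a map that is only Lipschitz with respect to the restricted (not warped) metric; I would check that the warping directions are collapsed by the map, so the dilation does not increase.
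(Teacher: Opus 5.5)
Your outline is sound and has the same architecture as the paper's proof:
\begin{itemize}
\item a first $\mu$-bubble $\Sigma^4$ mapping with nonzero degree to $S^3\times S^1$;
\item a second $\mu$-bubble in a long band of the infinite cyclic cover in the $S^1$-direction, giving a $3$-manifold $\Sigma''$ of nonzero degree onto $S^3$;
\item a Llarull-type twisted Dirac estimate in which only the $3\cdot 2=6$ ordered pairs of $S^3$-directions contribute, which is where the $6$ comes from.
\end{itemize}
Two steps are implemented differently.

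First, you find $\Sigma$ directly inside the band $f^{-1}(S^3\times S^1\times[T,T+\ell])$. Its width $\ell/L$ comes from the dilation bound, and $\Sigma$ is automatically homologous to the regular slice $\partial^-M_T$. The paper instead takes a product target $P\times\mathbb{R}$ and a boundary component of the global exhaustion of Theorem~\ref{thm:exhaustion}, and compares it homologically with a level set $f_2^{-1}(t_0)$. It then reduces a general cylindrical end to the product case (Case~2) by doubling $f_2^{-1}([T,\infty))$ and smoothing. Your localization makes that reduction unnecessary. This is a genuine gain, since keeping a positive scalar curvature lower bound through the doubling is not automatic without mean-convexity of $\Sigma_T$.

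Second, the two routes convert spectral positivity into pointwise positivity differently. The paper rescales conformally, $\bar g=\phi_1^2g_\Sigma$ and then $h_\Gamma=\phi_2^4\bar g|_\Gamma$. It uses that the curvature lower bound and the squared dilation acquire the same factor $\phi_1^{-2}\phi_2^{-4}$. As a result, its second band argument runs with the non-constant bound $\delta(D)\phi_1^{-2}$. Your warped products, $g_\Sigma+u^2dt^2$ and then $g_{\Sigma''}+u_1^2dt_1^2+u_2^2dt_2^2$, leave the horizontal metric, and hence the dilation $L$, untouched. They also give the constant bound $\sigma-4\pi^2L^2/\ell^2-L^2/k^2$, which tends to $\sigma$ itself rather than merely to a multiple of it. The price is twofold. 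The second step needs $S^1$-invariant (weighted) $\mu$-bubbles rather than Proposition~\ref{prop:existence_band} verbatim. The final estimate must also be carried out on the closed doubly warped product rather than on $\Sigma''$ itself.

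In that last step, the tools you name do not work as phrased: neither a relative index theorem nor the pullback of the spinor bundle of $S^3$ to $\Sigma''$ will do. Both $\Sigma''$ and $\Sigma''\times T^2$ are closed and odd-dimensional, so the relevant index vanishes. The paper handles this with $\Gamma\times S^1_\rho\to S^3\times S^1\to S^4$; you must likewise cross with a circle and compose with suspension maps to an even-dimensional sphere. The torus and circle directions must be mapped with small but nonzero dilation $O(\epsilon)$, not collapsed, so that the degree survives. Then Llarull's bound $\mathcal{R}_E\ge-\frac14\sum_{i\ne j}\mu_i\mu_j$ yields a point where $\sigma-4\pi^2L^2/\ell^2-L^2/k^2\le 6L^2+O(\epsilon)$. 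Letting $\ell,k\to\infty$ and $\epsilon\to0$ contradicts $L<\sqrt{\sigma/6}$.
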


\begin{proof}
We divide the proof into cases and begin with the product situation.

\medskip
\noindent\textbf{Case~1.}
\medskip
\noindent\textbf{Step~1.}
Assume that the target manifold is of the form
\[
Y = P \times \mathbb{R},
\]
where $P$ is a closed oriented manifold.
We write the given proper map $f : X \to Y$ in components as
\begin{equation}
    f = (f_1,f_2), \qquad 
f_1 : X \to P, \quad f_2 : X \to \mathbb{R}.
\end{equation}

\medskip
We fix a scale parameter $D>0$, which will be taken arbitrarily large.
By Theorem~\ref{thm:exhaustion}, applied to $(X,g)$ with scalar curvature
$R_X \ge \sigma > 0$, we obtain an exhaustion of $X$ by compact domains
whose boundaries form a family of smooth hypersurfaces indexed by $i\in\mathbb{Z}_{\ge 1}$
\[
\Sigma_i := \partial \Omega_i,
\]
satisfying the following properties:
the hypersurfaces $\Sigma_i$ are pairwise separated by distance at least $D$,
and each $\Sigma_i$ has positive scalar curvature in the spectral sense,
that is,

\begin{equation}\label{eq:spc_mu_bubble}
\lambda_1(L_{\Sigma_i}) \ge \delta(D) > 0.
\end{equation}

\medskip
\noindent\textbf{Step~2.}
Next, we cut $X$ by a regular level of the second component $f_2$.
Choose a regular value $t_0 \in \mathbb{R}$ of $f_2$ and set

\begin{equation}
\Sigma_0 := f_2^{-1}(t_0).
\end{equation}
Then $\Sigma_0$ is a smooth, compact, oriented hypersurface in $X$.

Let $\omega_P$ be a normalized volume form on $P$, and let
$\omega_{\mathbb{R}}$ denote the standard orientation form on $\mathbb{R}$.
Since $f$ is proper, the degree of $f$ is well-defined and can be expressed as
\[
\deg f
= \int_X f^*(\omega_P \wedge \omega_{\mathbb{R}})
= \int_X f_1^*(\omega_P) \wedge f_2^*(\omega_{\mathbb{R}}).
\]

By the choice of the regular value $t_0$ and the coarea formula, this integral
reduces to an integral over the fiber $\Sigma_0$:
\[
\deg f
= \int_{\Sigma_0} f_1^*(\omega_P)
= \deg\bigl(f_1|_{\Sigma_0}\bigr).
\]
In particular, the restriction of $f_1$ to $\Sigma_0$ has non-zero degree.

\medskip
Since $\Sigma_0$ is compact, it is contained in $\Omega_i$ for all $i$
sufficiently large.
Fix such an index $i$ and set $\Sigma_1 := \partial \Omega_i$.
Then $\Sigma_0$ and $\Sigma_1$ bound a compact region in $X$ and are therefore
homologous as oriented hypersurfaces.
Because degree is invariant under homology, it follows that
\[
\deg\bigl(f_1|_{\Sigma_1}\bigr)
= \deg\bigl(f_1|_{\Sigma_0}\bigr)
\neq 0.
\]

\medskip
Consequently, setting
\[
F := f_1|_{\Sigma_1} : \Sigma_1 \to P,
\]
we obtain a smooth map from a hypersurface $\Sigma_1$ arising from the
$\mu$--bubble exhaustion, whose degree is non-zero.

\medskip
\noindent\textbf{Step~3.}
We have obtained a hypersurface
\[
\Sigma := \Sigma_1 \subset X
\]
from the $\mu$--bubble exhaustion, together with a smooth map
\[
F := f_1|_{\Sigma} : \Sigma \to S^3 \times S^1
\]
of non-zero degree.
Write $F=(F^{(3)},F^{(1)})$, where
\[
F^{(3)} : \Sigma \to S^3,
\qquad
F^{(1)} : \Sigma \to S^1 .
\]

Consider a generic fiber of $F^{(3)}$, and let
$\gamma \subset \Sigma$ be a loop whose homotopy class represents such a fiber.
Passing to the covering $\widehat\Sigma \to \Sigma$ corresponding to the subgroup
of $\pi_1(\Sigma)$ generated by $F_*[\gamma]$, we may assume that the induced map
\[
\widehat F : \widehat\Sigma \to S^3 \times S^1
\]
induces a surjection on fundamental groups.

Since the universal cover of $S^1$ is $\mathbb{R}$, the map $\widehat F$ lifts to
a proper map
\[
\widetilde F = (\widetilde F^{(3)},\widetilde F^{(1)})
: \widetilde\Sigma \to S^3 \times \mathbb{R}
\]
of non-zero degree.
The dilation of $\widetilde F^{(3)}$ coincides with that of $F^{(3)}$, since the
covering transformations act only on the second factor.
Recall that the stability estimate \eqref{eq:spc_mu_bubble} holds for $\Sigma=\Sigma_1$.
Let $\phi_1>0$ be the first eigenfunction of $L_\Sigma$ such that
$L_\Sigma \phi_1 = \lambda_1(L_\Sigma) \phi_1 .
$
We consider the conformally related metric
\[
\bar g := \phi_1^{\frac{4}{n-2}}\, g_\Sigma
\]
on $\Sigma$, where $n=\dim\Sigma=4$.

By the conformal scalar curvature formula, we have
\[
R_{\bar g}
= \phi_1^{-\frac{n+2}{n-2}}
\bigl(
- c_n \Delta_\Sigma \phi_1 + R_\Sigma \phi_1
\bigr),
\qquad
c_n=\frac{4(n-1)}{n-2}.
\]
Using the eigenvalue equation for $\phi_1$ and the lower
bound $R_X\ge\sigma>0$, we obtain the pointwise inequality
\begin{equation}\label{eq:psc_conformal}
R_{\bar g}
\ge \lambda_1(L_\Sigma)\,\phi_1^{-\frac{4}{n-2}}
\ge \delta(D)\,\phi_1^{-2}
>0.
\end{equation}

In particular, $(\Sigma,\bar g)$ has positive scalar curvature.
Passing to the covering $\widetilde\Sigma\to\Sigma$ constructed above,
we lift $\phi_1$ and $\bar g$ to $\widetilde\Sigma$.
By abuse of notation, we denote the lifted objects again by
$\phi_1$ and $\bar g$.
Since the covering map is a local isometry, the inequality
\eqref{eq:psc_conformal} continues to hold on $\widetilde\Sigma$, and hence
\[
R_{\bar g} > 0
\quad\text{on }\widetilde\Sigma .
\]

\medskip
\noindent\textbf{Step~4.}
We now follow the general strategy of Theorem~2.5 in
\cite{cecchini2021enlargeable}, using $\mu$--bubbles in place of minimal
hypersurfaces.

Let
\[
\Gamma_0 \subset \widetilde\Sigma
\]
be a generic fiber of the second component $\widetilde F^{(1)}$.
Fix $L>0$ large and define the metric neighborhood
\[
U_L := \{ x \in \widetilde\Sigma \mid \operatorname{dist}(x,\Gamma_0) \le L \}.
\]
Applying Proposition~\ref{prop:existence_band} to $(U_L,h)$, we obtain a hypersurface
\[
\Gamma \subset U_L
\]
which is spectrally positive and homologous to $\Gamma_0$.
There exists a positive function $\phi_2$ on $\Gamma$ such that
\begin{equation}
L_{\Gamma}\phi_2
\ge \bigl( \delta(D)\phi_1^{-2} - \pi^2/L^2 \bigr)\phi_2
\quad \text{on } \Gamma .
\end{equation}
Define the conformally related metric
\[
h_\Gamma := \phi_2^{4}\, \bar{g}|_{\Gamma}.
\]
Its scalar curvature satisfies
\begin{equation}\label{eq:scalar_conformal_refresh}
R(h_\Gamma)
= \phi_2^{-5} L_{\Gamma}(\phi_2)
\ge \bigl( \delta(D)\phi_1^{-2} - \pi^2/L^2 \bigr)\phi_2^{-4}.
\end{equation}

Let
\[
G := \widetilde F^{(3)}|_{\Gamma} : \Gamma \to S^3 .
\]
With respect to $h_\Gamma$, the map $G$ is
$(l^2\phi_1^{-2}\phi_2^{-4})$--area--contracting, where
$l:=\operatorname{Dil}(f)$.
Fix $\rho>0$ and let $S^1_\rho$ denote the circle of radius $\rho$
with metric $dt_\rho^2$.
Let
\[
\sigma : S^3 \times S^1 \to S^4
\]
be a $1$--contracting smashing map of non-zero degree, and define
\[
H : \Gamma \times S^1_\rho \to S^4,
\qquad
H := \sigma \circ (G \times \mathrm{id}_\rho).
\]

Equip $\Gamma \times S^1_\rho$ with the product metric
\[
h_4 := h_\Gamma + dt_\rho^2 .
\]
With respect to the product metric $h_4$, the map $H$ is
$(l^2\phi_1^{-2}\phi_2^{-4})$--area--contracting on vectors tangent to $\Gamma$,
and $(1/\rho)$--contracting on vectors tangent to the $S^1_\rho$ factor.
Since the circle factor is flat, the scalar curvature of $h_4$ satisfies
\begin{equation}\label{eq:scalar_product}
R(h_4)
\ge
\bigl( \delta(D)\phi_1^{-2} - \pi^2/L^2 \bigr)\phi_2^{-4} > 0 .
\end{equation}

\medskip
\noindent\textbf{Step~5.}
We now invoke the Gromov--Lawson method~\cite{gromov1983positive}.
Since $\Gamma$ is $3$--dimensional, it admits a spin structure, and hence
$\Gamma \times S^1_\rho$ is spin.
Fix such a spin structure and let $\mathbb{S}$ denote the associated complex
spinor bundle with Dirac operator $\slashed D$.

Following Llarull, we consider the canonical Clifford module over $S^4$.
Define
\[
E_0 := P_{\mathrm{Spin}_4}(S^4) \times_\lambda \mathrm{Cl}_4 ,
\]
where $\lambda$ denotes left Clifford multiplication.
The bundle $E_0$ is equipped with the metric and connection
$\nabla^{E_0}$ induced by the round metric on $S^4$.

Pulling back via the map $H : \Gamma \times S^1_\rho \to S^4$, we obtain a
vector bundle
\[
E := H^* E_0
\]
over $\Gamma \times S^1_\rho$, endowed with the induced connection
$\nabla := H^* \nabla^{E_0}$.
Let
\[
\slashed D_E : \Gamma(\mathbb{S} \otimes E)
\longrightarrow
\Gamma(\mathbb{S} \otimes E)
\]
denote the corresponding twisted Dirac operator.

The volume form on $S^4$ induces a $\mathbb{Z}_2$--grading
$E_0 = E_0^+ \oplus E_0^-$.
Let $\slashed D_{E^+}$ denote the restriction of $\slashed D_E$ to
$\Gamma(\mathbb{S} \otimes E^+)$.
Since $\Gamma \times S^1_\rho$ is even dimensional, the spinor bundle
decomposes as $\mathbb{S} = \mathbb{S}^+ \oplus \mathbb{S}^-$, under which
$\slashed D_{E^+}$ is odd.
Its index is given by
\[
\operatorname{ind}(\slashed D_{E^+})
:= \dim \ker \slashed D_{E^+}^+
- \dim \ker \slashed D_{E^+}^- .
\]

Since $c_2(E_0^+) \neq 0$ and $\deg(H) \neq 0$, while
$p_1(\Gamma \times S^1_\rho)=0$ so that the total $\hat A$--genus equals
$1 \in H^0(\Gamma \times S^1_\rho;\mathbb{Q})$,
the Atiyah--Singer index theorem yields
\[
\operatorname{ind}(\slashed D_{E^+}) \neq 0 .
\]
The computation is identical to that appearing in
\cite{cecchini2021enlargeable}.

\medskip
\noindent\textbf{Step~6.}
We now analyze the kernel of $\slashed D_E^2$.
Let $u \in \Gamma(\mathbb{S} \otimes E)$.
By the Weitzenb\"ock formula,
\begin{equation}\label{eq:bochner_refresh}
\langle \slashed D_E^2 u, u \rangle
\ge \frac14 \langle R(h_4) u, u \rangle
+ \langle \mathcal R_E u, u \rangle ,
\end{equation}
where $\mathcal R_E \in \operatorname{End}(\mathbb{S} \otimes E)$ depends
linearly on the curvature of $\nabla^E$.

Using \eqref{eq:scalar_product}, we obtain the pointwise bound
\begin{equation}\label{eq:scalar_lower_bound_refresh}
\langle R(h_4) u, u \rangle_x
\ge
\bigl( \delta(D)\phi_1^{-2} - \pi^2/L^2 \bigr)\phi_2^{-4}
\,|u|_x^2 .
\end{equation}

The curvature term $\mathcal R_E$ is estimated following
\cite{llarull1998sharp}.
Fix $x \in \Gamma \times S^1_\rho$ and choose an $h_4$--orthonormal frame
$\{e_1,e_2,e_3,e_4\}$ near $x$ such that $(\nabla e_j)_x=0$,
with $\{e_1,e_2,e_3\}$ tangent to $\Gamma$ and $e_4$ tangent to $S^1_\rho$.
Likewise, choose an orthonormal frame
$\{\varepsilon_1,\varepsilon_2,\varepsilon_3,\varepsilon_4\}$ near $H(x)$
satisfying $(\nabla \varepsilon_j)_{H(x)}=0$ and
\[
H_* e_j = \mu_j \varepsilon_j
\qquad (1 \le j \le 4),
\]
for positive scalars $\mu_j$.

For $1 \le i \neq j \le 3$, the area--contraction of $G$ along $\Gamma$ implies
\begin{equation}\label{eq:mu_bound_Gamma}
\mu_i \mu_j \le l^2 \phi_1^{-2} \phi_2^{-4}.
\end{equation}
Moreover, since $\Gamma$ is compact, the map $G$ is $c$--contracting for some
constant $c$, and hence for $1 \le i \le 3$ we have
\begin{equation}\label{eq:mu_bound_circle}
\mu_i \mu_4 \le \frac{c}{\rho}.
\end{equation}

Combining \eqref{eq:mu_bound_Gamma} and \eqref{eq:mu_bound_circle} with the
estimates in~\cite{llarull1998sharp}, we obtain
\[
\langle \mathcal R_E u, u \rangle_x
\ge
-\frac14 \bigl(
l^2\phi_1^{-2}\phi_2^{-4} + 6c/\rho
\bigr)\,|u|_x^2 .
\]
Substituting this inequality and \eqref{eq:scalar_lower_bound_refresh} into
\eqref{eq:bochner_refresh}, we arrive at
\begin{equation}\label{eq:dirac_lower_bound_refresh}
\begin{aligned}
\bigl\langle \slashed D_E^{\,2} u , u \bigr\rangle
\;\ge\; \frac14 \Biggl(
&\Bigl\langle
\Bigl(
\delta(D)\phi_1^{-2}
- \frac{\pi^2}{L^2}
- 6l^2\phi_1^{-2}
\Bigr)
\phi_2^{-2} u ,
\; \phi_2^{-2} u
\Bigr\rangle
\\
&\qquad\qquad
- \frac{6c}{\rho}\,\|u\|^2
\Biggr).
\end{aligned}
\end{equation}

Since
\[
\ker \slashed D_{E^+}
\subset \ker \slashed D_E
\subset \ker \slashed D_E^2 ,
\]
the index computation implies $\operatorname{ind}(\slashed D_{E^+}) \neq 0$.
As $\phi_2>0$ and $\rho$ can be taken arbitrarily large,
inequality~\eqref{eq:dirac_lower_bound_refresh} forces the existence of a point
$x$ such that
\[
\delta(D)\phi_1^{-2}(x) - \frac{\pi^2}{L^2} - 6l^2\phi_1^{-2}(x) \le 0 .
\]
Letting $L \to \infty$ yields $\delta(D) - 6l^2 \le 0$, and sending $D \to \infty$
completes the proof in the product case.

\medskip
\noindent\textbf{Case~2.}
Finally, we reduce the general case to the product case.

Let
\[
f=(f_1,f_2): X \longrightarrow N \times \mathbb{R}
\]
be a proper smooth map of non-zero degree.
By properness, there exists $T \gg 1$ such that $f$ is transverse to
$\partial N \times \{T\}$ on the end $N \times [T,\infty)$.
Set
\[
X_T := f_2^{-1}\bigl([T,\infty)\bigr),
\qquad
\Sigma_T := f_2^{-1}(\{T\}).
\]
Then $X_T$ is complete, non-compact with boundary $\Sigma_T$, and the
positive scalar curvature lower bound is preserved.

We form the double $\widehat X_T$ of $X_T$ along $\Sigma_T$.
After smoothing the metric near the doubling locus, we obtain a complete
manifold with scalar curvature still bounded below by a positive constant.
The map $f$ extends to a proper smooth map
\[
\widehat f : \widehat X_T \longrightarrow \partial N \times \mathbb{R}
\]
of non-zero degree. Identifying $\partial N \times \mathbb{R}$ with a product $P \times \mathbb{R}$,
this reduces the situation to the product case treated in Case~1.
The conclusion then follows immediately.
This completes the proof in the general case.
\end{proof}
The proof of Theorem~\ref{thm:quantitative_sys_at_infinity} is entirely local
near the cylindrical end.
As a consequence, the same argument yields the following relative version.
\begin{cor}
\label{cor:quantitative_sys_relative}
Let $(M^5,\partial M)$ be a non-compact Riemannian manifold with compact boundary and
let
\[
f \colon (M,\partial M) \to (S^3\times S^1 \times \mathbb R_{\ge 0},\, S^3\times S^1)
\]
be a smooth non–zero degree map.
Assume that $M$ admits a complete metric $g$
with uniformly positive scalar curvature
$
R_g \ge \sigma >0 .
$

Then the $1$–dilation of $f$ satisfies
\[
\operatorname{Dil}_1(f) \;\ge\; \sqrt{\frac{\sigma}{6}} .
\]
\end{cor}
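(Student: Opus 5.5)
The statement is Corollary~\ref{cor:quantitative_sys_relative}. I would not redo the analysis. Instead I would reduce the relative statement to the setting of Theorem~\ref{thm:quantitative_sys_at_infinity}, more precisely to its product case (Case~1), whose proof only ever uses the behaviour of $f$ far out in the $\mathbb{R}$--direction.

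\textbf{Step 1: reduction to the product case.} Write $f=(f_1,f_2)$ with
\[
f_1\colon M\to P:=S^3\times S^1, \qquad f_2\colon M\to\mathbb{R}_{\ge 0}.
\]
Choose a regular value $t_0>0$ of $f_2$; by properness, which I take as implicit in ``non-zero degree'' for a map of pairs, $\Sigma_0:=f_2^{-1}(t_0)$ is a compact hypersurface. The degree of $f$ as a map of pairs can be computed as
\[
\deg f=\int_M f_1^*\omega_P\wedge f_2^*\omega_{\mathbb{R}},
\]
where $\omega_{\mathbb{R}}$ is compactly supported near $t_0$, away from $\partial M$. The coarea argument of Step~2 in Case~1 then gives $\deg(f_1|_{\Sigma_0})=\deg f\neq 0$.

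\textbf{Step 2: work on the end.} Set $M_{t_0}:=f_2^{-1}([t_0,\infty))$. This is a complete manifold with compact boundary $\Sigma_0$, it is disjoint from $\partial M$, and it still satisfies $R_g\ge\sigma$. I would then repeat Case~2 of the theorem: double $M_{t_0}$ along $\Sigma_0$ and smooth the metric near $\Sigma_0$, so that the scalar curvature stays $\ge\sigma-\epsilon$. This produces a proper map to $P\times\mathbb{R}$ of non-zero degree whose $1$--dilation differs from $\operatorname{Dil}_1(f)$ only by an error supported in an arbitrarily thin collar.

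\textbf{Alternative avoiding the doubling.} One can instead apply the $\mu$--bubble exhaustion of Theorem~\ref{thm:exhaustion} inside $M_{t_0}$, taking $\Sigma_0$ as the inner boundary, exactly as in Step~2 of the proof of Theorem~\ref{thm:exhaustion}. This gives a spectrally positive hypersurface $\Sigma_1$ homologous to $\Sigma_0$, and hence with $\deg(f_1|_{\Sigma_1})\neq 0$. From there Steps~3--6 (the covering and lift to $S^3\times\mathbb{R}$, the second $\mu$--bubble $\Gamma$, the Llarull twisted Dirac estimate on $\Gamma\times S^1_\rho$) apply verbatim and give
\[
\delta(D)-6\,\operatorname{Dil}_1(f)^2\le 0.
\]
Letting $D\to\infty$ yields $\operatorname{Dil}_1(f)\ge\sqrt{\sigma/6}$.

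\textbf{Main obstacle.} The delicate point is making precise that only the far part of $M$ is used. Concretely:
\begin{itemize}
\item[(a)] the band in Proposition~\ref{prop:existence_band} needs $d(\Sigma_0,\partial^+)>D$, and this is available because $M_{t_0}$ is complete and non-compact;
\item[(b)] the dilation appearing in Step~6 is that of $f$ restricted to $M_{t_0}$, which is at most $\operatorname{Dil}_1(f)$.
\end{itemize}
I would therefore prefer the exhaustion route over the doubling, since it avoids controlling the scalar curvature and the dilation through the smoothing step.
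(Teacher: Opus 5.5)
Your proposal is correct and is essentially the paper's argument. The paper proves this corollary only by remarking that the proof of Theorem~\ref{thm:quantitative_sys_at_infinity} is local near the cylindrical end, and your one-sided $\mu$--bubble in $M_{t_0}$ with inner boundary $\Sigma_0$, followed by Steps~3--6, is a clean way of making that remark precise. You are also right to prefer this over doubling: doubling along $\Sigma_0$ keeps a scalar curvature lower bound after smoothing only if $\Sigma_0$ has non-negative mean curvature as the boundary of $M_{t_0}$, and nothing guarantees this for a level set.
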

\begin{ex}\label{ex:quantitative_sys_end}
We explain how Theorem~\ref{thm:Ray_obstruction} in dimension \( n = 4 \)
follows from Corollary~\ref{cor:quantitative_sys_relative}.
Assume \( n = 4 \).
By hypothesis, the Poincar\'e dual at infinity of the proper ray \( \gamma \)
admits a factorization
\[
\alpha_\gamma = u^1 \cup u^2 \cup u^3,
\qquad u^i \in H^1_\infty(E).
\]
Each class \( u^i \) can be represented, via an Eilenberg--MacLane space,
by a map
\[
f_i \colon E \to S^1 .
\]

Let \( r \colon E \to \mathbb R_{\ge 0} \) be a proper function obtained by
smoothing the distance to a fixed compact separating hypersurface
near the beginning of the end.
Combining these maps, we obtain a proper map
\[
f = (f_1,f_2,f_3,r)
\colon E \to S^1 \times S^1 \times S^1 \times \mathbb R_{\ge 0},
\]
whose degree is nonzero.
Arguing as in Step~3 of the proof of
Theorem~\ref{thm:quantitative_sys_at_infinity},
we may further assume that \( f \) induces a surjection on fundamental groups.

By passing to a sufficiently large covering and considering the lifted map,
we obtain a map
\[
\tilde f \colon \widetilde E \to S^3 \times \mathbb R
\]
whose dilation constant can be made arbitrarily small.
Taking the product with an additional \( S^1 \) factor in both the domain
and the target, we obtain a proper map between \(5\)--manifolds
\[
\widetilde f \colon E \times S^1
\to S^3 \times S^1 \times \mathbb R_{\ge 0},
\]
whose dilation constant is again arbitrarily small
(the \( S^1 \)--component is defined by an appropriate scaling).

If \( E \) admitted a complete metric with uniformly positive scalar curvature,
then the product metric on \( E \times S^1 \) would also have uniformly positive
scalar curvature.
Moreover, the \(1\)--dilation of \( \widetilde f \) could be made arbitrarily small.

This contradicts Corollary~\ref{cor:quantitative_sys_relative},
which provides a quantitative lower bound on the \(1\)--dilation
in terms of the scalar curvature lower bound.
We conclude that no such metric can exist on \( E \).

\end{ex}

\section*{Acknowledgments}
The author is deeply grateful to his advisor, Tsuyoshi Kato, for his constant support and encouragement. 
The author also expresses his gratitude to Rafe Mazzeo for his interest, encouragement, and warm hospitality during his visit to Stanford University. 
The author also thanks Daiki Irikura for his valuable comments on Section~\ref{sec:prelim}, and Douglas Stryker, Otis Chodosh, and Filippo Gaia for their interest and helpful comments on an earlier draft. 
This work was supported by JST SPRING, Grant Number JPMJSP2110.

\bibliographystyle{amsalpha}
\bibliography{reference3.bib}

\end{document}